\newtheorem{thm}{Theorem}[section]
\newtheorem{defi}[thm]{Definition}
\newtheorem{prop}[thm]{Proposition}
\newtheorem{cor}[thm]{Corollary}
\newtheorem{lem}[thm]{Lemma}
\numberwithin{equation}{section}
\newcommand\nlongleftrightarrow{
  \mathrel{\ooalign{$\longleftrightarrow$\cr
    \hskip 0pt plus 0.8fill $\arrownot$ \hskip 0pt plus 2fill\cr}}
}
\newcommand\oset{\big\{\,}
\newcommand\cset{\,\big\}}
\begin{document}
\title{A new look at the interfaces in percolation}
\author{
\qquad Rapha\"el Cerf\footnote{
\noindent
D\'epartement de math\'ematiques et applications, Ecole Normale Sup\'erieure,
CNRS, PSL Research University, 75005 Paris.
\newline
Laboratoire de Math\'ematiques d'Orsay, Universit\'e Paris-Sud, CNRS, Universit\'e
Paris--Saclay, 91405 Orsay.}
\hskip 70pt Zhou Wei\footnotemark[1]
}
\maketitle
\begin{abstract}
We propose a new definition of the interface in the context of the Bernoulli percolation model.
We construct a coupling between two percolation configurations, one which is a standard percolation
configuration, and one which is a percolation configuration conditioned on a disconnection event.
We define the interface as the random set of the edges where these two configurations differ.
We prove that, inside a cubic box $\Lambda$, the interface between the top and the bottom of the box
is typically localised within a distance of order
$(\ln |\Lambda|)^2$ of the set of the pivotal edges. 
We prove also that, in our dynamical coupling, the typical speed of the pivotal edges remains bounded
as the box $\Lambda$ grows.
\end{abstract}

\section{Introduction}
At the macroscopic level, the interface between two pure phases seems to be deterministic. In fact, 
such an interface obeys a minimal action principle: it minimizes the surface tension between the two phases
and it is close to the solution of a variational problem. This can be seen as an empirical law, derived
from the observation at the macroscopic level.
This law has been justified from a microscopic point of view
in the context of the Ising model \cite{MR1181197}. 
One starts with a simple model
of particles located on a discrete lattice. There are two types of particles, which have a slight tendency
to repel each other. In the limit where the number of particles tends to $\infty$, at low temperatures,
the system presents a phenomenon of phase segregation, with the formation of interfaces between two
pure phases. On a suitable scale, these interfaces converge towards deterministic shapes, 
a prominent example being the Wulff crystal of the Ising model, which is the typical shape of the 
Ising droplets.
Although the limit is deterministic on the macroscopic level, the interfaces are intrinsically random objects
and their structure is extremely complex.
In two dimensions, the fluctuations of the Ising interfaces were precisely analysed in the DKS theory,
with the help of cluster expansions \cite{DH,MR1181197}. 
In higher dimensions, there is essentially one result on the fluctuations of the interfaces, due to
Dobrushin \cite{D72Gibbs}, which says that horizontal interfaces stay localised at low temperatures.
When dealing with interfaces in the Ising model, the first difficulty is to get a proper definition 
of the interface itself. The usual way is to start with the Dobrushin type boundary conditions, that is
a box with pluses on its upper half boundary and minuses on its lower half boundary. This automatically
creates an Ising configuration in the box with a microscopic interface between the pluses
and the minuses which separates the upper half and the lower half of the box.
Yet it is still not obvious how one should define the interface in this case, because 
several such microscopic interfaces exist, and a lot of different choices are possible. 
Dobrushin, Kotecky and Shlosman \cite{MR1181197} introduced a splitting rule between contours, which leads to pick up one particular microscopic
interface. The potential problem with this approach is that the outcome is likely to include
microscopic interfaces which are not necessarily relevant, for instance interfaces between opposite
signs which would have been present anyway, and which are not induced by the Dobrushin boundary conditions.

Our goal here is to propose a new way to look at the random interfaces, in any dimension $d\geq 2$.
We start our investigation in the framework of the Bernoulli percolation model, for several reasons.
First, the probabilistic structure of the percolation model is simpler than the one of the Ising model.
Another reason is that
the Wulff theorem in dimensions three was first derived for the percolation model \cite{MR1774341}
and then extended to the Ising model \cite{MR1724851,Cerf2000Wulff}.
A key fact was that the definition of the surface tension is much simpler for the percolation model
than for the Ising model. This leads naturally to hope that the probabilistic structure of the interfaces 
should be easier to apprehend as well in the percolation model.
Finally, in the context of percolation, one sees directly which edges are essential or not in
an interface: these are the pivotal edges. There is no corresponding notion in the Ising model.
For all these reasons, it seems wise to try to develop
a probabilistic description of random interfaces in the framework of Bernoulli percolation.

In this paper,
we consider the Bernoulli bond percolation model with a parameter $p$ close to $1$.
Interfaces in a cubic box $\Lambda$
are naturally created when the configuration is conditioned on the event that
the top $T$ and the bottom $B$ of the box are disconnected. From now onwards, this event is 
denoted by
$$\oset T\nlongleftrightarrow B \cset.$$
Our goal is to gain some understanding on the typical configurations realizing such a disconnection event.
To do so, we build a coupling between two percolation configurations $X,Y$ in the box $\Lambda$ such that:

\noindent $\bullet$ The edges in $X$ are i.i.d., open with probability $p$ and closed 
with probability $1-p$.

\noindent $\bullet$ The distribution of $Y$ is the distribution of the Bernoulli percolation conditioned on
$\oset T\nlongleftrightarrow B \cset$.

\noindent $\bullet$ Every edge open in $Y$ is also open in $X$.

\noindent 
We define then the random interface between the top $T$ and the bottom $B$ of the
box $\Lambda$ as the random set $\mathcal{I}$ of the edges where $X$ and $Y$ differ:
$$\mathcal{I}\,=\,\big\{\,e\subset \Lambda: X(e)\text{ is open},\,\,Y(e)\text{ is closed}\,\big\}\,.$$
Among these edges, some are essential for the disconnection between $T$ and $B$ to occur. These
edges are called pivotal and they are denoted by
$\mathcal{P}$:
$$\mathcal{P}\,=\,\bigg\{\,e\in\mathcal{I}:
\begin{array}{c}
	\text{the opening of $e$ in $Y$ would create}\\
\text{an open connection between $T$ and $B$}
\end{array}
\,\bigg\}\,.$$
When conditioning on the disconnection between $T$ and $B$, a lot of pivotal edges are created.
Yet another collection of edges which are not essential for the disconnection event turn out 
to be closed as well. Therefore it becomes extremely difficult to understand the effect of the 
conditioning on the distribution by looking at the conditioned probability measure alone.
This is why we build a coupling and we define the interface as the set of the edges where the two
percolation configurations differ.
The set $\mathcal{P}$ of the pivotal edges can be detected by a direct inspection of the conditioned
configuration, but not the interface $\mathcal{I}$. 
Our main result provides a quantitative control on the interface
$\mathcal{I}$ with respect to the set $\mathcal{P}$ of the pivotal edges.
We denote by $\mu_p$ the coupling probability measure between the configurations $X$ and $Y$. The precise construction of $\mu_p$ is done in section~\ref{new.mono}.
We denote by $d$ the usual Euclidean distance on $\mathbb{R}^d$, by $\Lambda$ a cubic box with sides
parallel to the axis of $\mathbb{Z}^d$, and by $|\Lambda|$ the cardinality of $\Lambda\cap \mathbb{Z}^d$.
\begin{thm}\label{new.main2}
There exists $\tilde{p}< 1$ and $\kappa>0$, such that, for $p\geqslant \tilde{p}$, any $c\geqslant 1$ and any box $\Lambda$ satisfying $|\Lambda|\geqslant \max\left\lbrace (cd)^{cd^2},3^{6d}\right\rbrace$, 
$$\mu_p \Big(\exists e\in \mathcal{P}\cup\mathcal{I},d\left(e,\Lambda^c\cup\mathcal{P}\setminus\{e\}\right)
	\geqslant \kappa c^2\ln^2 |\Lambda| \Big)\leqslant \frac{1}{|\Lambda|^c}.
$$
\end{thm}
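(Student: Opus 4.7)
The plan is a union bound over the $O(d|\Lambda|)$ edges of $\Lambda$. Fix $R=\kappa c^2\ln^2|\Lambda|$ and, for each edge $e\subset\Lambda$, set
\[
A_e\,=\,\Big\{e\in\mathcal{P}\cup\mathcal{I},\ d(e,\Lambda^c\cup\mathcal{P}\setminus\{e\})\geq R\Big\}.
\]
It is enough to prove $\mu_p(A_e)\leq |\Lambda|^{-c-d-1}$ uniformly in $e$. On $A_e$ the Euclidean ball $B(e,R)$ lies in $\Lambda$ and contains no pivotal edge of $Y$ other than possibly $e$ itself; hence every edge $f\in \mathcal{I}\cap B(e,R)\setminus\{e\}$ is ``gratuitous'', in the sense that reopening $f$ in $Y$ would not reconnect $T$ and $B$. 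Since in the coupling $\mu_p$ (Section~\ref{new.mono}) closings are driven only by the need to prevent a $T$--$B$ connection, such gratuitous closings should carry an exponential Peierls cost.

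To turn this heuristic into a quantitative bound I would renormalise at the mesoscopic scale $\ell=\ln|\Lambda|$: partition $B(e,R)$ into disjoint sub-boxes of side $\ell$ and call a box \emph{bad} if $X$ and $Y$ disagree on at least one of its edges. The construction of $\mu_p$ should deliver a local estimate of the form ``the probability that a prescribed sub-box sitting far from the boundary and from pivotals is bad is at most $|\Lambda|^{-\gamma(p)}$''. Given such an estimate, the bad box containing $e$ must belong to a $*$-connected cluster of bad boxes of diameter at least $R/\ell=\kappa c^2\ln|\Lambda|$, so the standard count of $*$-connected clusters together with a union bound yields
\[
\mu_p(A_e)\,\leq\,\exp\Bigl(-\gamma(p)\kappa c^2\ln^2|\Lambda|+O\bigl(c^2\ln|\Lambda|\bigr)\Bigr),
\]
which for $\kappa$ large enough is bounded by $|\Lambda|^{-c-d-1}$. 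The hypothesis $|\Lambda|\geq(cd)^{cd^2}$ ensures that $\ell=\ln|\Lambda|$ is much larger than $cd$, so that the mesoscopic blocks are genuinely large and the block estimate is not degraded by boundary effects within each block.

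The main obstacle is the block-scale Peierls estimate itself. The measure $\mu_p$ couples $X$ with the \emph{globally} conditioned measure on $Y$, so there is no a priori local independence structure to exploit: one cannot conclude that non-pivotal closings are rare by looking at $Y$ alone. The monotone construction from Section~\ref{new.mono} must be used precisely to transfer the information that no pivotal edge sits in $B(e,R)$ into a statement about the local marginal of $Y$, showing quantitatively that on a mesoscopic block this marginal is close in total variation to the unconditioned Bernoulli product measure. Once this local decoupling is established in a single block, iterating it along the $R/\ell$ blocks of a crossing cluster produces the $\ln^2|\Lambda|$ scaling announced in the theorem, and the quadratic dependence on $c$ arises naturally from balancing the $c$ in the target $|\Lambda|^{-c}$ against the $c$-dependent size of the region that must be crossed in order to avoid all other pivotals.
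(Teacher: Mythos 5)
There is a genuine gap, and in fact the proposed approach would fail for a structural reason, independent of the unproved block estimate you flag yourself.

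Your renormalisation scheme hinges on the claim that the bad block containing $e$ must sit in a $*$-connected cluster of bad blocks of diameter at least $R/\ell$. Nothing forces this. The interface $\mathcal{I}$ is the set of edges where $X$ and $Y$ disagree; it has no reason to be $*$-connected and, as the paper itself emphasises, it is typically a \emph{dust} of isolated closed edges pinned near $\mathcal{P}$. An edge $e\in\mathcal{I}\setminus\mathcal{P}$ lying deep in a region with no other pivotal or interface edges nearby is precisely the configuration whose probability the theorem bounds; on that event the block containing $e$ is bad but its neighbouring blocks can all be good, so there is no long chain of bad blocks and the Peierls count over $*$-connected clusters gives you nothing. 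The only thing that \emph{is} $*$-connected is the cut (Lemma~\ref{new.connect}), which is a statement about closed edges of $Y$ alone, not about $X$-vs-$Y$ disagreements; Proposition~\ref{new.distpivot} already exploits that connectivity for $\mathcal{P}$, but it does not extend to $\mathcal{I}$.

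Beyond this, you correctly identify that you would also need a quantitative local decoupling estimate saying that far from the pivotals the marginal of $Y$ under $\mu_p$ looks Bernoulli, and you offer no mechanism to prove it. The paper does not take this route at all: its proof is \emph{dynamical}, not a static block argument. It observes that a non-pivotal edge of $\mathcal{I}_t$ must have been pivotal at some earlier time $\tau$, that $t-\tau$ cannot exceed $O(c|\Lambda|\ln|\Lambda|)$ because otherwise the edge would have been re-randomised, and then it controls the speed of $\mathcal{P}$ over intervals of length $|\Lambda|$ using space-time paths and BK (Propositions~\ref{new.vconst} and~\ref{new.dH}). Chaining $O(c\ln|\Lambda|)$ such intervals, each contributing displacement $O(c\ln|\Lambda|)$, is exactly what produces the $c^2\ln^2|\Lambda|$ in the statement; the quadratic scaling comes from time-slicing the dynamics, not from a $c$-versus-$\ln|\Lambda|$ tradeoff in a static renormalisation. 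Any attempt to prove the theorem purely from the equilibrium measure without invoking the dynamics would have to supply, from scratch, a replacement for the whole space-time path machinery, and your proposal does not indicate how to do that.
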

\noindent The typical picture which emerges from 
theorem~\ref{new.main2} is the following. In the configuration conditioned on the event
$\oset T\nlongleftrightarrow B \cset$, there is a set $\mathcal{P}$ of pivotal edges.
These are the edges having one extremity connected by an open path to the top $T$ and
the other extremity connected by an open path to the bottom $B$.
Because of the conditioning, compared to the i.i.d. configuration,
some additional edges are closed, but they are typically within a distance of order
	$(\ln |\Lambda|)^2$ of the set $\mathcal{P}$ of the pivotal edges. 
	The edges which are further away from $\mathcal{P}$ behave as in the ordinary unconditioned
	percolation.
	Therefore the interface $\mathcal{I}$ is strongly localised around the set $\mathcal{P}$ of the
	pivotal edges. The interface is a dust of closed edges pinned around the pivotal edges.

	Our next endeavour was to obtain a conditional version of theorem~\ref{new.main2}.
	More precisely, we would like to estimate the conditional probability
$$\mu_p \Big( e\in \mathcal{P}\cup\mathcal{I}\,\Big|\,d
\big(e,\mathcal{P}\setminus\{e\}\big)
	\geqslant \kappa(\ln |\Lambda|)^2 \Big)\,.$$
We did not really succeed so far, however we are able to control the interface conditionally on the distance to a cut.
Before stating our result, let us recall the definition of a cut.
\begin{defi} A set $S$ of edges 
	separates the top $T$ and the bottom $B$ 
	in $\Lambda$ 
	if
every deterministic path of edges from $T$ to $B$ in $\Lambda$ intersects $S$.
\noindent A cut $C$ between $T$ and $B$ in $\Lambda$ is a set of edges which separates $T$ and $B$ in $\Lambda$ and
which is minimal for the inclusion.
\end{defi} 
A cut $C$ is closed in the configuration $Y$ if all the edges of $C$ are closed in $Y$.
We denote by $\mathcal{C}$ the collection of the closed cuts present in $Y$. Since
$Y$ realizes the event 
$\oset T\nlongleftrightarrow B \cset$, the collection $\mathcal{C}$ is not empty.
\begin{thm}\label{new.main1}
We have the following inequality:
\begin{multline*}
\exists \tilde{p} <1 \quad\exists \kappa>0 \quad \forall p\geqslant \tilde{p}\quad\forall c\geqslant 2 \quad  \forall \Lambda\quad \ln|\Lambda|>4+c+2dc^2+12(2\kappa d)^d \\
\forall e \in \Lambda \quad d(e,\Lambda^c)\geqslant \kappa c^2\ln^2|\Lambda|\\ \mu_p\left(e\in \mathcal{I}\,\Big|\,
	\exists C\in \mathcal{C}, d(e,C)\geqslant \kappa c^2\ln^2|\Lambda|\right) \leqslant \frac{1}{|\Lambda|^c}.
\end{multline*}
\end{thm}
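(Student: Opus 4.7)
The plan is to deduce Theorem \ref{new.main1} from Theorem \ref{new.main2} using a deterministic inclusion between pivotal edges and closed cuts. I would first isolate the key observation that $\mathcal{P}\subset C$ for every $C\in\mathcal{C}$: if $e'\in\mathcal{P}$, opening $e'$ in $Y$ creates an open $T$-$B$ path $\gamma$, and any cut $C$ must meet $\gamma$; since every edge of $\gamma\setminus\{e'\}$ is open in $Y$ whereas the edges of $C$ are closed, one must have $e'\in C$.

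Set $A=\{\exists C\in\mathcal{C},\ d(e,C)\geqslant\kappa c^2\ln^2|\Lambda|\}$. On $A$, the inclusion above gives $d(e,\mathcal{P})\geqslant d(e,C)\geqslant\kappa c^2\ln^2|\Lambda|>0$, which forces $e\notin\mathcal{P}$ and yields $d(e,\mathcal{P}\setminus\{e\})\geqslant\kappa c^2\ln^2|\Lambda|$. Combined with the hypothesis $d(e,\Lambda^c)\geqslant\kappa c^2\ln^2|\Lambda|$, this provides
\[
d\bigl(e,\Lambda^c\cup\mathcal{P}\setminus\{e\}\bigr)\;\geqslant\;\kappa c^2\ln^2|\Lambda|.
\]
If additionally $e\in\mathcal{I}$, then $e$ itself witnesses the exceptional event of Theorem \ref{new.main2}. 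Applying that theorem with a slightly larger exponent $c'>c$, which the logarithmic hypothesis on $|\Lambda|$ accommodates, yields $\mu_p(e\in\mathcal{I},\,A)\leqslant|\Lambda|^{-c'}$.

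To pass from this joint bound to the conditional one, it suffices to show $\mu_p(A)\geqslant|\Lambda|^{-(c'-c)}$. For this I would compare $\mu_p$ with the unconditioned Bernoulli$(p)$ measure through an explicit test event, for instance $A_0=\{\text{all edges in a unit-thickness slab adjacent to a face of }\Lambda\text{ are closed}\}$. On $A_0$ the slab is automatically a closed cut, and the hypothesis $d(e,\Lambda^c)\geqslant\kappa c^2\ln^2|\Lambda|$ ensures it lies at distance at least $\kappa c^2\ln^2|\Lambda|$ from $e$, so $A_0\subset A$. Since $\mu_p(A_0)=\mathbb{P}(A_0)/\mathbb{P}(T\nlongleftrightarrow B)$ and, at $p$ close to $1$, flat slab cuts are nearly optimal among all disconnecting configurations, this ratio is at least polynomially small in $|\Lambda|$, which absorbs the $c'-c$ gap in exponents.

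The main obstacle is this last step: quantitatively showing that $\mu_p(A)$ is not too small. The deterministic inclusion $\mathcal{P}\subset C$ and the subsequent distance computation are essentially algebraic once stated, but lower bounding $\mu_p(A)$ requires either a Wulff-type control on the geometry of typical closed cuts or a careful comparison with a concrete test configuration that preserves enough probability mass under conditioning.
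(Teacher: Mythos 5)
Your deterministic reduction is correct: pivotal edges lie in every cut (this is precisely lemma~\ref{new.capcut} combined with the observation that every cut sits between $S^+$ and $S^-$), so on the event $A=\{\exists C\in\mathcal{C},\ d(e,C)\geqslant\kappa c^2\ln^2|\Lambda|\}$ one does get $e\notin\mathcal{P}$ and $d(e,\Lambda^c\cup\mathcal{P}\setminus\{e\})\geqslant\kappa c^2\ln^2|\Lambda|$, so that $\{e\in\mathcal{I}\}\cap A$ is contained in the exceptional event of theorem~\ref{new.main2}. This gives the joint bound $\mu_p(e\in\mathcal{I},\,A)\leqslant|\Lambda|^{-c'}$ essentially for free.

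The gap you flag at the end, however, is not a technicality that a ``test configuration'' can close, and it is the reason the paper takes an entirely different route. To convert the joint bound into the conditional one you need $\mu_p(A)\geqslant|\Lambda|^{-(c'-c)}$, i.e.\ $P_\mathcal{D}(A)\geqslant\mathbb{P}(A_0)/\mathbb{P}(T\nlongleftrightarrow B)\gtrsim|\Lambda|^{-O(1)}$ for some test event $A_0\subset A$. If you take $A_0$ to be a closed flat slab of $L^{d-1}$ edges, then $\mathbb{P}(A_0)=(1-p)^{L^{d-1}}$, whereas a Peierls count of the deformed cuts of size $L^{d-1}+j$ (there are roughly $\binom{L^{d-1}}{j}(\mathrm{const})^j$ of them) gives
\[
\mathbb{P}(T\nlongleftrightarrow B)\gtrsim (1-p)^{L^{d-1}}\bigl(1+C(1-p)\bigr)^{L^{d-1}}=(1-p)^{L^{d-1}}\exp\bigl(\Theta((1-p)L^{d-1})\bigr),
\]
so the ratio $\mathbb{P}(A_0)/\mathbb{P}(T\nlongleftrightarrow B)$ decays like $\exp(-c(p)L^{d-1})$, stretched-exponentially in $|\Lambda|$ rather than polynomially, for any \emph{fixed} $p<1$. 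Sharpening $A_0$ to ``some cut far from $e$ is closed'' and showing that the contribution of cuts near $e$ is a polynomially small fraction of $\mathbb{P}(T\nlongleftrightarrow B)$ is exactly a Wulff/coarse-graining estimate; it is not something FKG or a simple counting argument provides, and it is strictly heavier machinery than anything used elsewhere in the paper. Worse, FKG only goes in the useless direction here: $A$ is decreasing and so is $\{T\nlongleftrightarrow B\}$, so FKG gives $P_\mathcal{D}(A)\geqslant P_p(A)$, which is exponentially small.

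The paper sidesteps this entirely. It never compares $\mu_p(A)$ to an unconditioned quantity; instead it stays inside the conditional measure throughout. Working with the stationary dynamics, it time-reverses to place the last pivotal occurrence of $e$ in the future, chops the time interval into blocks of length $|\Lambda|$, and uses lemma~\ref{new.recinit} and its inductive generalisation proposition~\ref{new.lemrec} to show that, \emph{conditionally on the cuts having been far from $e$ throughout the past blocks}, a cut cannot suddenly appear near $e$. The conditioning on $A$ is thus woven into the recursion rather than peeled off at the end, which is what makes the argument close without any surface-order large-deviation input. If you want to salvage your approach, you would essentially have to reprove a form of the Wulff lower bound for $P_\mathcal{D}(A)$ uniformly in $e$; the dynamical induction is both lighter and more robust.
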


Let us explain briefly how we build the coupling probability measure $\mu_p$, as well as
the strategy for proving theorem~\ref{new.main2}. 
Conditioning on the event
$\oset T\nlongleftrightarrow B \cset$ creates non trivial correlations between the edges, and
there is no simple tractable formula giving for instance the conditional distribution of a finite
set of edges. Yet a standard application of the FKG inequality yields that, for any increasing event
$A$, we have
$$P_p\big(A\,\big|\, T\nlongleftrightarrow B\big)\,\leq\,P_p(A)\,.$$ 
Thus the product measure $P_p$ stochastically dominates the conditional measure
$P_p(\cdot\,\big|\, T\nlongleftrightarrow B\big)$. Strassen's theorem tells us that there
exists a monotone coupling between these two probability measures.
In order to derive quantitative estimates on the differences between the coupled configurations, we 
build our coupling measure as the invariant measure of a dynamical process. This method of coupling is standard, for instance it is used in the proof
of Holley inequality (see chapter 2 of~\cite{MR1379156}). Our contribution is to study some specific properties
of this coupling in the context of percolation, and to relate it to the geometry
of the interfaces. To do so, we consider
the classical dynamical percolation process in the box $\Lambda$, see~\cite{MR2762676}. Since we always work in a finite box, 
we use the following discrete time version. We start with an initial configuration $X_0$.
At each step, we choose one edge uniformly at random, and we update its state with a coin of parameter $p$.
Of course all the random choices are independent. 
The resulting process is denoted by $(X_t)_{t\in\mathbb{N}}$.
Obviously the invariant probability measure of 
$(X_t)_{t\in\mathbb{N}}$ is the product measure $P_p$ and the process
$(X_t)_{t\in\mathbb{N}}$ is reversible with respect to $P_p$.
Next, we duplicate the initial configuration $X_0$, thereby
getting 
a second configuration $Y_0$.
We use the same random variables as before to update this second configuration, with one essential difference.
In the second configuration, we prohibit the opening of an edge if this opening creates a connection 
between the top $T$ and the bottom $B$. This mechanism ensures that $X_t$ is always above $Y_t$.
Moreover, a classical result on reversible Markov chains ensures that the invariant probability measure of 
the process
$(Y_t)_{t\in\mathbb{N}}$ is the conditional probability measure 
$P_p(\cdot\,\big|\, T\nlongleftrightarrow B\big)$. 
Our coupling probability measure $\mu_p$ is defined as the invariant probability measure of the 
process
$(X_t,Y_t)_{t\in\mathbb{N}}$.
In the case of the Ising model, 
where one has access to an explicit formula for the equilibrium measure, 
one usually derives results on the dynamics (for instance the Glauber dynamics) 
from results on the Ising Gibbs measure.
We go here in the reverse direction: we use our dynamical construction to derive results on the
equilibrium measure $\mu_p$.

For the proof, we consider the stationary process $(X_t,Y_t)_{t\in\mathbb{N}}$ starting from its equilibrium distribution $\mu_p$.
We fix a time $t$ and we estimate the probability that
the configuration $(X_t,Y_t)$ 
realizes the event appearing in the statement of theorem~\ref{new.main2}.
We distinguish the case of edges in the interface which are pivotal or not.
For pivotal edges, we shall prove the following 
	slightly stronger result.
\begin{prop} \label{new.distpivot}
There exists $\tilde{p}<1$ and $\kappa>1$ such that, for $p\geqslant \tilde{p}$, and for any $c \geqslant 1$ and any box $\Lambda$ satisfying $|\Lambda|> 3^{6d}$, we have
$$ P_p \Big(\exists e\in \mathcal{P}, d(e,\Lambda^c\cup\mathcal{P}\setminus \{e\})\geqslant \kappa c\ln|\Lambda|
	\,\Big|\,
 T\nlongleftrightarrow B 
	\Big)\leqslant \frac{1}{|\Lambda|^c}.
$$
\end{prop}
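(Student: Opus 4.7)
The plan is to proceed by a union bound over edges $e \in \Lambda$, combined with a Peierls-type estimate on closed dual structures. Set $r = \kappa c \ln |\Lambda|$. The event of interest is contained in the union, over edges $e$ with $d(e,\Lambda^c) \geq r$, of the event
$$A_e = \bigl\{e \in \mathcal{P}\bigr\} \cap \bigl\{\mathcal{P} \cap B(e,r) = \{e\}\bigr\}.$$
Since there are $O(|\Lambda|)$ such edges, it suffices to show
$$P_p\bigl(A_e \bigm| T \nlongleftrightarrow B\bigr) \leq |\Lambda|^{-c-1}$$
uniformly in $e$. The key observation is that $\mathcal{P}$ coincides with the set of closed edges having one endpoint in the open cluster $K_T$ of $T$ and the other in the open cluster $K_B$ of $B$; the isolation hypothesis thus means that no closed edge of $B(e,r)$ other than $e$ directly bridges $K_T$ and $K_B$.

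The structural heart of the argument is the claim that on $A_e$, the configuration must contain a connected closed dual structure of size at least $c_1 r$ through $e$, for some $c_1 = c_1(d) > 0$. Indeed, since $d(e,\Lambda^c) \geq r$ and the endpoint $x \in K_T$ of $e$ must be connected to $T$ through open edges leaving $B(e,r)$, the subset $K_T \cap B(e,r)$ has diameter $\geq r-1$. By a standard isoperimetric inequality on $\mathbb{Z}^d$ (the minimum edge-boundary of a connected set of diameter $r$ is linear in $r$), its closed edge-boundary inside $B(e,r)$ has cardinality at least $c_1 r$. By the isolation hypothesis, each such boundary edge other than $e$ has its far endpoint outside $K_B$, hence in an auxiliary (``third'') open cluster; these third clusters are themselves bounded by further closed edges, so the forbidden character of direct $K_T$--$K_B$ bridges forces a genuine ``thickening'' of the cut around $e$, producing the claimed connected closed structure.

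A standard Peierls-type count bounds the number of connected closed dual structures of size $\ell$ through a fixed edge by $\bar{C}(d)^\ell$, and each such structure has probability at most $(1-p)^\ell$ of being entirely closed. To pass from this unconditional bound to the conditional probability under $\{T \nlongleftrightarrow B\}$, I would use a finite-energy/surgery argument: from any configuration realising $A_e \cap \{T \nlongleftrightarrow B\}$, open the $\sim c_1 r$ extra closed edges that thicken the cut around $e$; the remaining globally disconnecting structure (far from $e$) still separates $T$ from $B$, so the modified configuration still lies in $\{T \nlongleftrightarrow B\}$. This injection multiplies the $P_p$-weight by a factor $(p/(1-p))^{c_1 r}$, yielding
$$P_p\bigl(A_e \bigm| T \nlongleftrightarrow B\bigr) \leq \bigl(\bar{C}(d)(1-p)/p\bigr)^{c_1 r}.$$
For $\tilde p$ close enough to $1$ so that $\bar{C}(d)(1-\tilde p)/\tilde p < 1$, and $\kappa$ large enough (depending on $d$ and $\tilde p$ only, not on $c$), the single-edge bound is $\leq |\Lambda|^{-c-d}$; the union bound over $O(|\Lambda|)$ edges then delivers the desired $|\Lambda|^{-c}$.

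The hard part will be making the structural claim rigorous: turning ``$e$ is an isolated pivotal in $B(e,r)$'' into the existence of a connected closed dual structure of length $\Omega(r)$, via isoperimetry combined with the absence of direct $K_T$--$K_B$ bridges. A secondary difficulty is the surgery step in the third paragraph: one must identify precisely which closed edges can be opened without destroying a global cut separating $T$ from $B$, and check that the resulting map is injective with bounded multiplicity; this is where care is needed, as the cut itself passes through $B(e,r)$ and is entangled with the extras being removed.
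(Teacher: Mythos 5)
Your route is genuinely different from the paper's in both of its two main steps, and the first step has a real gap, while the second is harder than you make it sound but could in principle be salvaged.

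\paragraph{The structural step.} Your isoperimetric argument produces $\Omega(r)$ closed edges on the boundary of $K_T$ inside $B(e,r)$, but it does \emph{not} produce a \emph{connected} closed dual structure, and that connectedness is exactly what the Peierls count $\bar C(d)^\ell$ requires. Your attempt to repair this by invoking ``third clusters'' and a ``thickening of the cut'' is not an argument; it is a description of what you hope is true. The paper's proof sidesteps the whole issue by using the $*$-connectedness of the external boundary $\partial^{\mathrm{ext}}O'(T)$ (Lemma~\ref{new.connect}, a topological fact of Pisztora/Deuschel type in $\mathbb{Z}^d$), together with the identification $\mathcal{P}=S^+\cap S^-$ (Lemma~\ref{new.capcut}). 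These two facts immediately give a closed $*$-path inside $S^+$ from $e$ to the nearest other pivotal edge (or to $\Lambda^c$), whose initial portion of length $\geqslant r$ consists of non-pivotal edges and is therefore disjoint from $S^-$. No isoperimetry is needed, and connectedness comes for free. Without a lemma of this type your structural claim does not close.

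\paragraph{The conditioning step.} You propose a finite-energy/surgery injection: open the $\Omega(r)$ extra closed edges near $e$, argue the configuration remains in $\{T\nlongleftrightarrow B\}$, and pay $(p/(1-p))^{\Omega(r)}$. This can work if done carefully (fix the shape of the closed structure, sum over shapes; for a fixed shape the map is injective; disconnection is preserved because a cut can be chosen inside $S^-$, which is disjoint from the opened edges), but you yourself flag that the injectivity and the ``which edges are safe to open'' questions are unresolved. The paper replaces this entire mechanism by a single application of the BK inequality: the event $\mathcal{E}(e)$ (a long closed $*$-path near $e$) and the event $\{T\nlongleftrightarrow B\}$ both being decreasing, and the closed path near $e$ being disjoint from the cut in $S^-$, one gets
$\{e\in\mathcal{P},\,d(e,\Lambda^c\cup\mathcal{P}\setminus\{e\})\geqslant r\}\cap\{T\nlongleftrightarrow B\}\subset\mathcal{E}(e)\circ\{T\nlongleftrightarrow B\}$,
whence $P_\mathcal{D}(\cdots)\leqslant P_p(\mathcal{E}(e))$. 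This is both simpler and avoids all the bookkeeping of your surgery. If you want to keep the surgery route, you must (i) specify exactly the set of edges to be opened (the initial segment of the closed $*$-path in $S^+\setminus S^-$ is the right choice), (ii) observe that a cut contained in $S^-$ survives the opening, and (iii) make the parametrization by path shapes explicit to control multiplicity. But the cleanest fix is to import the $*$-connectedness lemma and then switch to the BK argument.
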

\noindent The proof of this proposition relies on the BK inequality. 
We consider next the case of an edge $e$ in the interface which is not pivotal.
Such an edge $e$ can be opened at any time in the configuration $Y$. 
Therefore, unless it becomes pivotal again,
it cannot stay for a long time in the interface. 
In addition, 
before becoming part of the interface $\mathcal{I}$, 
the edge $e$ must have been pivotal. Indeed, non--pivotal
edges in the process 
$(Y_t)_{t\in\mathbb{N}}$ evolve exactly as in the process
$(X_t)_{t\in\mathbb{N}}$.
We look backwards in the past at the last time when the edge $e$ was still pivotal.
As said before, this time must be quite close from $t$. However, at time $t$, it turns out
that 
the set of the pivotal edges is quite far from $e$.
We conclude that
the set of the pivotal edges must have moved away from $e$ very fast.
%
To estimate the
probability of a fast movement of the set $\mathcal{P}$, we derive an estimate on the speed of the
set of the pivotal edges, which is stated in proposition~\ref{new.vconst}. This estimate is at the heart
of the argument. It relies on the construction of specific space--time paths, 
which describe how the influence
of the conditioning propagates in the box.
If a space--time path travels over a long distance in a short time, then this implies
that a certain sequence of closing events has occurred, and we estimate the corresponding
probability. This estimate is delicate, because the closed space--time path can take
advantage of the pivotal edges which remain closed thanks to the conditioning. The computation
relies again on the BK inequality, this time applied to the space--time paths.

The statement of theorem~\ref{new.main2} naturally prompts several questions.
First,
the results presented here hold only for values of $p$ sufficiently close to $1$, 
because the proofs rely on Peierls arguments.
\smallskip

\noindent
{\bf Question 1}.
Is it possible to prove an analogous
result throughout the supercritical regime $p>p_c$?
\smallskip

\noindent
Proposition~\ref{new.distpivot} shows that, typically, 
each pivotal edge is within a distance of order
	$\ln |\Lambda|$ of another pivotal edge. Of course, we would like to understand better 
	the random set $\mathcal{P}$.
\smallskip

\noindent
{\bf Question 2}.
What else can be said about the structure of the set $\mathcal{P}$?

\noindent This question is essential to understand the fluctuation of the interfaces. 

\smallskip
\noindent
{\bf Question 3}.
Is it possible to replace 
	$(\ln |\Lambda|)^2$ by
	$\ln |\Lambda|$ in the statement of 
theorem~\ref{new.main2}?

\noindent
Since there is no square in the logarithm appearing in proposition~\ref{new.distpivot}, we
suspect that it should also be the case in the statement of theorem~\ref{new.main2}.
Despite serious efforts, we did not manage to remove the square in the logarithm so far. However,
we obtained a partial result in this direction: we managed to improve the control of the speed of
the pivoltal edges.
More precisely, we prove the following result.
\begin{thm}\label{eps.main}
There exists $\tilde{p}<1$ such that for $p\geqslant\tilde{p}$, $c\geqslant 1$ and any box $\Lambda$ satisfying $|\Lambda|\geqslant e^{2d^2c}$, we have
$$P_\mu\left( d_H^{2d c\ln|\Lambda|}\Big(\bigcup_{r\in [t-c|\Lambda|\ln\Lambda,t]}\mathcal{P}_r,\bigcup_{s\in[t,t+c|\Lambda|\ln\Lambda]}\mathcal{P}_s\Big)\geqslant 2d c\ln|\Lambda|\right)\leqslant \frac{1}{|\Lambda|^{c-3}}.
$$
\end{thm}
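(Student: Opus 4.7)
The statement is a long-time-window version of the instantaneous speed estimate of Proposition~\ref{new.vconst}, and the plan is to rerun the space-time path argument used there with a larger spatial threshold and a longer time horizon.

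First I would peel off the Hausdorff-type distance. The event in the statement forces the existence of a time $r$ in one of the two intervals $I_-=[t-c|\Lambda|\ln|\Lambda|,t]$ or $I_+=[t,t+c|\Lambda|\ln|\Lambda|]$ and of an edge $e\in\mathcal{P}_r$ such that the Euclidean ball $B(e,2dc\ln|\Lambda|)$ contains no pivotal edge at any time in the other interval. By stationarity and reversibility of the coupled process $(X_s,Y_s)_{s\in\mathbb{Z}}$ under $\mu_p$, the past-to-future and the future-to-past directions have the same probability, so it is enough to treat the former and pay a factor~$2$. A union bound over the starting pair $(e,r)$ then costs at most $|\Lambda|\cdot c|\Lambda|\ln|\Lambda|\leq |\Lambda|^3$ (for $|\Lambda|$ as large as in the hypothesis).

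For a fixed pair $(e,r)$ with $r\in I_-$ and $e\in\mathcal{P}_r$, I assume that no edge of $B(e,2dc\ln|\Lambda|)$ is pivotal at any time $s\in I_+$ and track the pivotal set forward in time. Whenever the current endpoint ceases to be pivotal, through a closing or an opening of an incident edge in the coupled dynamics, I jump to a newly pivotal edge chosen by the same canonical rule as in the construction behind Proposition~\ref{new.vconst}. Under the assumption, the endpoint must exit the ball $B(e,2dc\ln|\Lambda|)$ by the end of $I_+$, so the cumulative spatial length $L$ of the resulting space-time path is at least $2dc\ln|\Lambda|$. The jumps are supported by pairwise edge-disjoint closing events in the i.i.d.\ coin sequence driving the dynamics, so the BK inequality bounds the probability of such a path of spatial length $L$ by $\bigl(C(1-p)\bigr)^{L/(2d)}$, with at most $(2d)^L$ combinatorial choices for its shape. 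Choosing $p$ close enough to $1$ that $2d\bigl(C(1-p)\bigr)^{1/(2d)}\leq e^{-1}$, the contribution of paths of length $L\geq 2dc\ln|\Lambda|$ is at most $|\Lambda|^{-c}$.

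Multiplying by the $|\Lambda|^3$ factor from the union bound yields the stated bound $|\Lambda|^{-(c-3)}$; the hypothesis $|\Lambda|\geq e^{2d^2c}$ is precisely what guarantees that the spatial length $L$ of the path is large enough for the exponential gain to absorb all polynomial prefactors. The main obstacle is making the space-time path construction work over a window of length $\Theta(|\Lambda|\ln|\Lambda|)$, during which every edge is updated $\Theta(\ln|\Lambda|)$ times: one must design the path so that the exponential gain $(1-p)^L$ is driven by its spatial length rather than by its temporal length. This is the same delicate bookkeeping that underlies the proof of Proposition~\ref{new.vconst}, now carried out at the coarser spatial scale $2dc\ln|\Lambda|$ and over the longer time horizon $c|\Lambda|\ln|\Lambda|$.
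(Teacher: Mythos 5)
Your plan reuses the machinery of Proposition~\ref{new.vconst} over a time window of length $\Theta(|\Lambda|\ln|\Lambda|)$, and you correctly identify the main obstacle (the exponential gain must be driven by spatial length, not temporal length), but you then dismiss it as ``the same delicate bookkeeping that underlies the proof of Proposition~\ref{new.vconst}, now carried out \dots over the longer time horizon.'' That is precisely what fails. The BK estimate behind Proposition~\ref{new.vconst} is Proposition~\ref{new.BKFermeutre}, whose bound for a STP during $[s,t]$ has the form $\bigl((t-s)(1-p)/|\Lambda|\bigr)^{\mathrm{length}(\Gamma)-k}$: the factor $(t-s)/|\Lambda|$ comes from choosing, for each closing event, the time in $]s,t]$ at which it occurs. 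When $t-s=\Theta(|\Lambda|)$ this factor is $O(1)$ and the argument closes; when $t-s=\Theta(|\Lambda|\ln|\Lambda|)$ the factor is $\Theta(\ln|\Lambda|)$, so the per-step bound $\ln|\Lambda|\cdot(1-p)$ cannot be pushed below $1$ uniformly in $\Lambda$ for any fixed $p<1$, and the exponential decay is lost. This is exactly why the original proof of Theorem~\ref{new.main2} pays a $\ln^2|\Lambda|$ by chopping into $c\ln|\Lambda|$ windows of length $|\Lambda|$, and why Theorem~\ref{eps.main} cannot be proved by a longer run of the same path.

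The missing idea is the \emph{impatient} STP of Section~\ref{eps.constp}. Its defining feature is that every time change is immediately followed by an update of that same edge ($E_{t_{i+1}+1}=e_{i+1}$), and it is $X$-closed-moving. In Lemma~\ref{eps.lemstp}, each time change then contributes an extra factor $1/|\Lambda|$ (the probability that the next update hits a prescribed edge), which exactly cancels the $\binom{\ell|\Lambda|}{k}$ combinatorial factor from choosing the $k$ time-change instants inside the long interval; the residual factor is $(1+1/|\Lambda|)^{\ell|\Lambda|}=O(e^\ell)$ and the net per-step gain is $4(1-p)$, independent of $t-s$. This is what makes the exponential decay survive over a window of length $\ell|\Lambda|$. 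There is also a second, structural gap you do not address: the impatient STP naturally terminates at an edge of $\mathcal{P}_s\cup\mathcal{I}_s$, not $\mathcal{P}_s$ (see Proposition~\ref{eps.consstp}). To get a statement purely about pivotal edges, the paper adds a separate ``waiting-time'' argument: if some interface edge $f$ near $e$ has not been pivotal during the whole past window, then $E_r\neq f$ for $\Theta(|\Lambda|\ln|\Lambda|)$ consecutive steps, which has probability at most $|\Lambda|^{-c}$. Your proposal contains neither of these two ingredients, which are precisely what distinguish this theorem from Proposition~\ref{new.vconst}.
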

\noindent
To control the distance between pivotal edges at two different times, we need to control the speed of the pivotal edges. 
In the proof of theorem~\ref{new.main2}, we obtain a control on the speed of these edges during a time interval of order $|\Lambda|$. 
If we apply these results
on a time interval of length $2dc|\Lambda|\ln|\Lambda|$, we can bound the distance of displacement of
the pivotal edges by $\ln^2|\Lambda|$ instead of
$\ln|\Lambda|$. To remove the square, we need a new ingredient compared to the previous argument. 
We shall obtain a speed estimate on a time interval of order $|\Lambda|\ln|\Lambda|$ by studying a new type of space-time path which connects a pivotal edge at time $t$ to an edge of $\mathcal{P}_s\cup\mathcal{I}_s$ at a time $s<t$. The length of this new type of space-time paths has an exponential decay property during a time interval of order $|\Lambda|\ln|\Lambda|$. As a drawback, we have to replace $\mathcal{P}$ by $\mathcal{P}\cup\mathcal{I}$ due to the construction of this new space-time path. As a consequence, we can only study the distance between a pivotal edge and the union of the pivotal edges on a time interval in the past. 
\smallskip

\noindent
Ultimately, we would like to gain some understanding on the Ising interfaces.
The natural road to transfer percolation results towards the Ising model is to use the FK percolation
model. However, there are several difficulties to overcome in order to adapt the proof to FK percolation.
First, we use the BK inequality twice in the proof, and this inequality is not available in the FK model.
Second, the dynamics for the FK model is more complicated.
\smallskip

\noindent
{\bf Question 4}.
Does theorem~\ref{new.main2} extend to the FK percolation model?
\smallskip

\noindent
Suppose that the answer to question 4 is positive. It is not obvious
to transcribe 
theorem~\ref{new.main2} in the Ising context. For instance, the pivotal edges, which can be detected
by visual inspection of a percolation configuration, are hidden inside the associated Ising configuration.
\smallskip

\noindent
{\bf Question 5}.
What is the counterpart of
theorem~\ref{new.main2} for Ising interfaces?
\smallskip

\noindent
The questions 4 and 5 are addressed in \cite{lowTIsingLocal}.

The paper is organized as follows.
In section~\ref{new.mono}, we define precisely the model and the notations. Beyond the classical percolation
definitions, this section contains the definition of the space--time paths and the graphical construction of the
coupling.
Section~\ref{new.iso} is devoted to the proof of
proposition~\ref{new.distpivot}. 
In section~\ref{new.speed}, we prove the central result on the control of the speed of the set of the pivotal edges.
Then, the theorem~\ref{new.main2} is proved in section~\ref{new.th1}.
In section \ref{new.speedI}, we improve the results obtained in section~\ref{new.speed} and we prove the theorem~\ref{new.main1} in section~\ref{new.th2}.
Then we continue our study to prove theorem \ref{eps.main}.
In section~\ref{eps.constp}, we construct the new space-time path which will be used in the proof. In section \ref{eps.speedcontrol}, we control the distance between $\mathcal{P}_t\cup\mathcal{I}_t$ and $\mathcal{P}_{t+s}$ for $s\leqslant |\Lambda|\ln|\Lambda|$ with the help of this space-time path. Finally, the proof of theorem~\ref{eps.main} is presented in the section \ref{eps.proofmain}.

\paragraph{Acknowledgement.} We warmly thank Jean-Baptiste Gouéré for his attentive reading
and for numerous constructive comments which were essential to improve the presentation of the results and the clarity of the proofs. 


\section{The model and notations}
\label{new.mono}
\subsection{Geometric definitions}
We give standard geometric definitions.
\paragraph{The edges $\mathbb{E}^d$.} The set of edges $\mathbb{E}^d$ is the set of the pairs $\{x,y \}$ of points of $\mathbb{Z}^d$ which are at Euclidean distance 1. 

\paragraph{The box $\Lambda$.} We will mostly work in a closed box $\Lambda$ centred at the origin. We denote by $T$ the top side of $\Lambda$ and by $B$ its bottom side.

\paragraph{The separating sets.} Let $A,B$ be two subsets of $\Lambda$. We say that a set of edges $S\subset \Lambda$ separates $A$ and $B$ if no connected subset of $\Lambda\cap\mathbb{E}^d\setminus S$ intersects both $A$ and $B$. Such a set $S$ is called a separating set for $A$ and $B$. We say that a separating set is minimal if there does not exist a strict subset of $S$ which separates $A$ and $B$.

\paragraph{The cuts.} We say that $S$ is a cut if $S$ separates $T$ and $B$, and $S$ is minimal for the inclusion.

\paragraph{The usual paths.} We say that two edges $e$ and $f$ are neighbours if they have one endpoint in common. A usual path is a sequence of edges $(e_,\dots,e_n)$ such that for $1\leqslant i <n$, the edge $e_i$ and $e_{i+1}$ are neighbours.

\paragraph{The $*$-paths.} In order to study the cuts in any dimension $d \geqslant 2$, we use $*$-connectedness on the edges as in~\cite{Pisztora1996Surface}. We consider the supremum norm on $\mathbb{R}^d$:
$$\forall x = (x_1,\dots,x_d)\in \mathbb{R}^d\qquad\parallel x\parallel_\infty = \max_{i = 1,\dots,d}|x_i|.
$$
For $e$ an edge in $\mathbb{E}^d$, we denote by $m_e$ the center of the unit segment associated to $e$. We say that two edges $e$ and $f$ of $\mathbb{E}^d$ are $*$-neighbours if $\parallel m_e-m_f\parallel_\infty\leqslant 1$. A $*$-path is a sequence of edges $(e_1,\dots,e_n)$ such that, for $1\leqslant i <n$, the edge $e_i$ and $e_{i+1}$ are $*$-neighbours.
\subsection{The dynamical percolation.} 
We define the dynamical percolation and the space-time paths.
\paragraph{Percolation configurations.} A percolation configuration in $\Lambda$ is a map from the set of the edges included in $\Lambda$ to $\{0,1\}$. An edge $e\subset \Lambda$ is said to be open in a configuration $\omega$ if $\omega(e) =1$ and closed if $\omega(e) = 0$. For two subsets $A,B$ of $\Lambda$ and a configuration $\omega\in \Omega$, we denote by $A\overset{\omega}{\longleftrightarrow} B$ the event that there is an open path between a vertex of $A$ and a vertex of $B$ in the configuration $\omega$.
\paragraph{Probability measures.} We denote by $P_p$ the law of the Bernoulli bond percolation in the box $\Lambda$ with parameter $p$. The probability $P_p$ is the probability measure on the set of bond configurations which is the product of the Bernoulli distribution $(1-p)\delta_0+p\delta_1$ over the edges included in $\Lambda$. We define $P_\mathcal{D}$ as the probability measure $P_p$ conditioned on the event $\oset T\nlongleftrightarrow B\cset$, i.e.,
$$P_\mathcal{D}(\cdot) = P_p\left(\,\cdot \,\big|\,T\nlongleftrightarrow B\,\right).
$$
\paragraph{Probability space.} Throughout the paper, we assume that all the random variables used in the proofs are defined on the same probability space $\Omega$. For instance, this space contains the random variables used in the graphical construction presented below, as well as the random variables generating the initial configurations of the Markov chains. We denote simply by $P$ the probability measure on $\Omega$.

\paragraph{Graphical construction.}
We now present a graphical construction of the dynamical percolation in the box $\Lambda$. We build a sequence of triplets $(X_t,E_t,B_t)_{t\in \mathbb{N}}$, where $X_t$ is the percolation configuration in $\Lambda$ at time $t$, $E_t$ is a random edge in the box $\Lambda$ and $B_t$ is a Bernoulli random variable. The sequence $(E_t)_{t\in \mathbb{N}}$ is an i.i.d. sequence of edges, with uniform distribution over the edges included in $\Lambda$. The sequence $(B_t)_{t\in \mathbb{N}}$ is an i.i.d. sequence of Bernoulli random variables with parameter $p$. The sequence $(E_t)_{t\in \mathbb{N}}$ and $(B_t)_{t\in \mathbb{N}}$ are independent. The process $(X_t)_{t\in \mathbb{N}}$ is built iteratively as follows. At time $0$, we start from the configuration $X_0$, which might be random. At time $t$, we change the state of $E_t$ to $B_t$ and we set 
$$\forall t\geqslant 1  \qquad X_t(e) = \left\lbrace\begin{array}{cc}
X_{t-1}(e) & \text{if } E_t \neq e\\
B_t & \text{if } E_t = e
\end{array}\right..
$$
The process $(X_t)_{t\in \mathbb{N}}$ is the dynamical percolation process in the box $\Lambda$.

\paragraph{The space-time paths.} We introduce the space-time paths which generalise both the usual paths and the $*$-paths to the dynamical percolation. A space-time path is a sequence of pairs, called time-edges, $(e_i,t_i)_{1\leqslant i \leqslant n}$, such that, for $1\leqslant i \leqslant n-1$, we have either $e_i= e_{i+1}$, or $(e_i$, $e_{i+1}$ are neighbours and $t_i = t_{i+1})$. We say that a space-time path $(e_i,t_i)_{1\leqslant i \leqslant n}$ is during a time interval $[s,t]$ if for all $1\leqslant i\leqslant n$, we have $t_i\in[s,t]$. We define also space-time $*$-paths, by using edges which are $*$-neighbours in the above definition. For $s,t$ two integers, we define 
$$ s\wedge t = \min(s,t), \qquad s\vee t = \max(s,t).
$$
A space-time path $(e_i,t_i)_{1\leqslant i \leqslant n}$ is open in the dynamical percolation process $(X_t)_{t\in \mathbb{N}}$ if
$$\forall i\in \oset 1,\dots ,n \cset \quad X_{t_i}(e_i) = 1$$ and
$$\forall i\in \oset 1,\dots ,n-1 \cset\quad e_i = e_{i+1}\quad\Longrightarrow \quad\forall t\in[t_i\wedge t_{i+1},t_i\vee t_{i+1}] \quad X_t(e_i) = 1.
$$
In the same way, we can define a closed space-time path by changing $1$ to $0$ in the previous definition. In the remaining of the article, we use the abbreviation STP to design a space-time path. Moreover, unless otherwise specified, the closed paths (and the closed STPs) are defined with the relation $*$ and the open paths (and the open STPs) are defined with the usual relation. This is because the closed paths come from the cuts, while the open paths come from existing connexions. 

\subsection{The interfaces by coupling.} 
We propose a new way of defining the interfaces by coupling two processes of dynamical percolation. We start with the graphical construction $(X_t,E_t,B_t)_{t\in \mathbb{N}}$ of the dynamical percolation. We define a further process $(Y_t)_{t\in\mathbb{N}}$ as follows: at time 0, we set $X_0=Y_0$, and for all $t\geqslant 1$, we set
$$ \forall e \subset \Lambda\quad Y_t(e) = \left\lbrace \begin{array}{cl}
Y_{t-1}(e) & \text{if }\quad e \neq E_t\\
0 &\text{if }\quad e = E_t\text{ and } B_t = 0\phantom{T\overset{Y_{t-1}^{E_t}}{\nlongleftrightarrow} B}\\
1 & \text{if }\quad e=E_t, B_t = 1 \text{ and } T\overset{Y_{t-1}^{E_t}}{\nlongleftrightarrow} B\\
0 & \text{if }\quad e=E_t, B_t = 1 \text{ and } T\overset{Y_{t-1}^{E_t}}{\longleftrightarrow} B
\end{array} \right.,
$$
where, for a configuration $\omega$ and an edge $e$, the notation $\omega^e$ means the configuration obtained by opening $e$ in $\omega$. Typically, we start with a configuration $Y_0$ realizing the event $\oset T\nlongleftrightarrow B\cset$, but this is not mandatory in the above definition. An illustration of this dynamics is given in the figure~\ref{new.fig:deco}.
We denote by $P_\mathcal{D}$ the equilibrium distribution of the process $(Y_t)_{t\in\mathbb{N}}$.
\begin{figure}[ht] 
\centering
\includegraphics[width = 10cm]{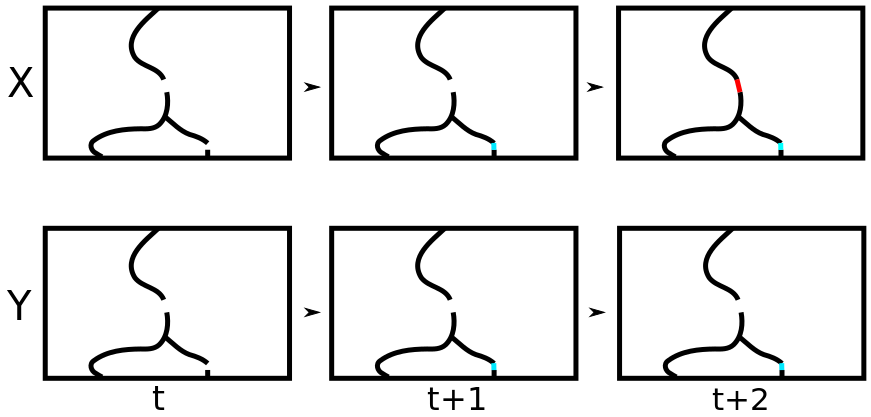}
\caption{A coupling of the process $(X_t,Y_t)_{t\in\mathbb{N}}$. At time $t+1$ we try to open the blue edge and at time $t+2$, we try to open the red edge.}
\label{new.fig:deco}
\end{figure}
Before opening a closed edge $e$ at time $t$, we verify whether this will create a connexion between $T$ and $B$. If it is the case, the edge $e$ stays closed in the process $(Y_t)_{t\in\mathbb{N}}$ but can be opened in the process $(X_t)_{t\in\mathbb{N}}$, otherwise the edge $e$ is opened in both processes $(X_t)_{t\in\mathbb{N}}$ and $(Y_t)_{t\in\mathbb{N}}$. On the contrary, the two processes behave similarly for the edge closing events since we cannot create a new connexion by closing an edge. The set of the configurations satisfying $\oset T\nlongleftrightarrow B \cset$ is irreducible and the process $(X_t)_{t\in\mathbb{N}}$ is reversible. Therefore, the process $(Y_t)_{t\in\mathbb{N}}$ is the dynamical percolation conditioned to satisfy the event $\oset T\nlongleftrightarrow B \cset$. According to the lemma 1.9 of~\cite{Kelly:2011:RSN:2025239}, the invariant probability measure of $(Y_t)_{t\in \mathbb{N}}$ is $P_\mathcal{D}$, the probability $P_p$ conditioned by the event $\oset T\nlongleftrightarrow B\cset$, i.e., 
$$P_\mathcal{D} (\cdot)= P_p(\cdot \,|\, T\nlongleftrightarrow B) .
$$

Suppose that we start from a configuration $(X_0,Y_0)$ belonging to the set 
$$\mathcal{E} = \oset (\omega_1,\omega_2)\in \{0,1\}^{\mathbb{E}^d\cap \Lambda}\times \{T\nlongleftrightarrow B\}\,:\, \forall e\subset \Lambda\quad \omega_1(e)\geqslant \omega_2(e)\cset.
$$
The set $\mathcal{E}$ is irreducible and aperiodic. In fact, each configuration of $\mathcal{E}$ communicates with the configuration where all edges are closed. The state space $\mathcal{E}$ is finite, therefore the Markov chain $(X_t,Y_t)_{t\in\mathbb{N}}$ admits a unique equilibrium distribution $\mu_p$. We denote by $P_\mu$ the law of the process $(X_t,Y_t)_{t\in\mathbb{N}}$ starting from a random initial configuration $(X_0,Y_0)$ with distribution $\mu_p$.
We now present a definition of the interface between $T$ and $B$ based on the previous coupling.
\begin{defi}The interface at time $t$ between $T$ and $B$, denoted by $\mathcal{I}_t$, is the set of the edges in $\Lambda$ that differ in the configurations $X_t$ and $Y_t$, i.e.,
$$ \mathcal{I}_t = \big\{\,e\subset\Lambda : X_t(e) \neq Y_t(e)\,\big\}.
$$
\end{defi}
\noindent The edges of $\mathcal{I}_t$ are open in $X_t$ but closed in $Y_t$ and the configuration $X_t$ is above the configuration  $Y_t$. We define next the set $\mathcal{P}_t$ of the pivotal edges for the event $\{T\nlongleftrightarrow B\}$ in the configuration $Y_t$.
\begin{defi}
The set $\mathcal{P}_t$ of the pivotal edges in $Y_t$ is the collection of the edges in $\Lambda$ whose opening would create a connection between $T$ and $B$, i.e.,
$$\mathcal{P}_t = \big\{ \,e\subset\Lambda : T\overset{Y_t^e}{\longleftrightarrow} B\,\big\}.
$$
\end{defi}
We define finally the set $\mathcal{C}_t$ of the cuts in $Y_t$.
\begin{defi}
The set $\mathcal{C}_t$ of the cuts in $Y_t$ is the collection of the cuts in $ \Lambda$ at time $t$.
\end{defi}

\section{The isolated pivotal edges}\label{new.iso}
In this section, we will show the proposition~\ref{new.distpivot}. We first investigate the structure of the set of the cuts. In a configuration $\omega$ realizing the event $\{T\nlongleftrightarrow B\}$, we will identify two separating sets $S^+$ and $S^-$. We construct $S^+$ by considering the open cluster 
$$O(T) = \oset x\in \mathbb{Z}^d\cap \Lambda\,:\, x\overset{\omega}{\longleftrightarrow} T\cset.$$ 
We consider the set $O(T)^c = \mathbb{Z}^d\setminus O(T)$. As $\mathbb{Z}^d\setminus \Lambda$ is $*$-connected, there exists only finitely many $*$-connected components of $O(T)^c$ and exactly one of them is of infinite size. We denote these components by $G,H_1,\dots,H_k$ where $G$ is the unique infinite component. We set 
$$O'(T) = O(T) \cup H_1\cup \dots\cup H_k.
$$
The set $O'(T)$ is $*$-connected and has no holes. For a $*$-connected set $A\subset \mathbb{Z}^d$, we define the external boundary of $A$, denoted by $\partial^{ext}A$, as 
$$ \partial^{ext}A = \oset \{x,y\}\in \mathbb{E}^d\,:\, x\in A, y\notin A\cset. 
$$
We then define $S^+$ as the subset of $\partial^{ext}O'(T)$ consisting of the edges of $\partial^{ext}O'(T)$ which are included in $\Lambda$.
In a similar way, we define $S^-$ by replacing $T$ by $B$ in the previous construction. Each of the two sets contains a cut. An illustration of these two separating sets can be found in the figure~\ref{new.fig:cuts}.

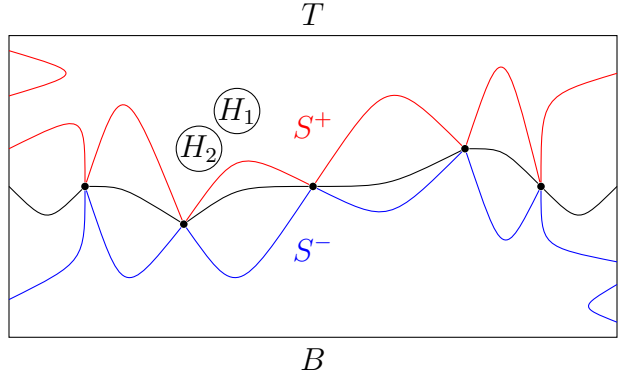
\begin{figure}[ht]
\center
\begin{tikzpicture}
\draw (0,0) rectangle (8,4);
\node[below] at (4,0) {$B$}; 
\node[above] at (4,4) {$T$}; 
\node[fill,circle,inner sep = 1pt] at (1,2) (a) {};
\node[fill,circle,inner sep = 1pt] at (2.3,1.5) (b) {};
\node[fill,circle,inner sep = 1pt] at (4,2) (c) {};
\node[fill,circle,inner sep = 1pt] at (6,2.5) (d) {};
\node[fill,circle,inner sep = 1pt] at (7,2) (e) {};
\draw[red] (0,2.5) .. controls (1,3) .. (a) .. controls (1.5,3.5) .. (b) .. controls (3,2.5) .. (c) .. controls (5,3.5) .. (d) .. controls (6.5,4) .. (e) .. controls (7,3.2) .. (8,3.5);
\draw (2.5,2.5) circle (0.3);
\node at (2.5,2.5) {$H_2$};
\draw (3,3) circle (0.3);
\node at (3,3) {$H_1$};
\draw[red] (0,3.2) .. controls (1,3.5) .. (0,3.8);
\draw[blue] (0,0.5) .. controls (1,1) .. (a) .. controls (1.5,0.5) .. (b) .. controls (3,0.5) .. (c) .. controls (5,1.5) .. (d) .. controls (6.5,1) .. (e) .. controls (7,1.2) .. (8,1);
\draw (0,2) .. controls (0.5,1.5) .. (a) .. controls (1.5,2) .. (b) .. controls (3,2) .. (c) .. controls (5,2) .. (d) .. controls (6.5,2.5) .. (e) .. controls (7.5,1.5) .. (8,2);
\draw[blue] (8,0.7) .. controls (7.5,0.4) .. (8,0.2);
\node[above,red] at (4,2.5)  {$S^+$};
\node[below,blue] at (4,1.5) {$S^-$};
\end{tikzpicture}
\caption{The sets $S^+$ (red) and the set $S^-$ (blue).}\label{new.fig:cuts}
\end{figure}

\begin{lem}\label{new.connect}
The sets $\partial^{ext} O'(T)$ and $\partial^{ext}O'(B)$ are $*$-connected.
\end{lem}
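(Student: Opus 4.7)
The plan is to split the proof into two parts: first I would verify that $O'(T)$ is itself $*$-connected as a subset of $\mathbb{Z}^d$, and then deduce the $*$-connectivity of its external edge boundary from a general topological lemma applied to the pair $O'(T)$, $G$.

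For the first part, the set $O(T)$ is nearest-neighbour connected since it is an open cluster in $\Lambda$, hence $*$-connected. Each $H_i$ is by definition $*$-connected; moreover, being a \emph{maximal} $*$-connected subset of $O(T)^c$, every vertex outside $H_i$ which is $*$-adjacent to $H_i$ must lie in $O(T)$. In particular, each $H_i$ shares a pair of $*$-neighbours with $O(T)$, so the union $O'(T)=O(T)\cup H_1\cup\cdots\cup H_k$ is $*$-connected. By construction, the complement $G=\mathbb{Z}^d\setminus O'(T)$ is $*$-connected as well, since it was singled out as the unique infinite $*$-component of $O(T)^c$ (all finite components having been absorbed into $O'(T)$).

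For the second part, the relevant general fact is: if $A\subset\mathbb{Z}^d$ is such that both $A$ and $\mathbb{Z}^d\setminus A$ are $*$-connected, then the external edge boundary $\{\{x,y\}\in\mathbb{E}^d:x\in A,\,y\notin A\}$ is $*$-connected in the edge-$*$-adjacency of the paper (midpoints at $L^\infty$-distance at most $1$). To prove this, given two boundary edges $e=\{a,b\}$ and $e'=\{a',b'\}$ with $a,a'\in A$ and $b,b'\in G$, I would interpolate along a $*$-path of vertices $a=v_0,\dots,v_n=a'$ inside $A$. The key geometric observation is that when two vertices $u,v$ of $A$ are $*$-adjacent, any two boundary edges attached to them respectively have midpoints within $L^\infty$-distance at most $3/2$, and one can exhibit at most one intermediate boundary edge in between (for instance, any edge joining $u$ or $v$ to a common $*$-neighbour in $G$) to form a short $*$-path.

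The main obstacle is the case where the vertex path $(v_i)$ enters the deep interior of $A$, where some vertex carries no boundary edge at all. The natural way to handle this is to project the vertex path onto the \emph{inner boundary} of $A$, namely the set of vertices of $A$ with at least one nearest-neighbour in $G$: a discrete Jordan-type argument in dimension $d\geq 2$, powered by the $*$-connectivity of $G$, shows that this inner boundary is itself $*$-connected, so that any deep excursion of the vertex path can be replaced by a boundary-hugging detour. Once the vertex path hugs the inner boundary, the associated sequence of boundary edges yields the desired $*$-path of edges in $\partial^{ext}O'(T)$. The same argument, applied symmetrically to $O'(B)$ and its complement, gives the $*$-connectivity of $\partial^{ext}O'(B)$.
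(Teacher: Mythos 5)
The paper's own proof of this lemma is a one-line citation to Lemma~2.1 of Pisztora (with pointers to Kesten and Tim\'ar), so your proposal is not so much a variant proof as an attempt to re-derive the cited topological result from scratch. Your first part is fine and actually slightly sharper than you state: by interpolating a ``box path'' between $*$-neighbours $u,v\in H_i$ (whose intermediate vertices are $*$-adjacent to both $u$ and $v$, hence lie in $H_i$ or in $O(T)$), one sees that $O'(T)$ is not merely $*$-connected but nearest-neighbour connected, which is exactly the hypothesis needed for the Pisztora/Deuschel--Pisztora/Tim\'ar lemma. Since you only record $*$-connectivity of $O'(T)$, you then have to work under a weaker hypothesis than the one in the literature, and this is where your argument breaks down.

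The gap is in the second part. First, a local arithmetic slip: if $u,v\in A$ are $*$-adjacent, the midpoints of boundary edges attached to $u$ and to $v$ can be at $L^\infty$-distance up to $\|v-u\|_\infty + 1 = 2$, not $3/2$ (take $u=(0,0,0)$, $v=(1,1,1)$, $e_1=\{u,(-1,0,0)\}$, $e_2=\{v,(2,1,1)\}$), so a single ``intermediate'' boundary edge need not close the gap, and the existence of a short chain of boundary edges at each step is precisely what has to be proved, not assumed. Second, and more seriously, the step where you invoke ``a discrete Jordan-type argument, powered by the $*$-connectivity of $G$'' to conclude that the inner vertex boundary of $A$ is $*$-connected is circular: that statement \emph{is} the vertex version of the lemma you are trying to prove (it is, up to formulation, Tim\'ar's theorem), so you cannot appeal to it as a known black box inside a self-contained proof. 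As written, the proposal replaces one non-trivial discrete-topology lemma by another of the same depth, without proving either. A repair would be to observe that $O'(T)$ is nearest-neighbour connected with $*$-connected complement $G$, and then cite (or carefully reproduce) the Deuschel--Pisztora / Tim\'ar boundary-connectivity lemma, as the paper does.
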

\noindent This result is a direct consequence of the first point in lemma 2.1 in~\cite{Pisztora1996Surface}. We also mention the lemma 2.23 in~\cite{MR876084} for a similar result on the set of vertices and a shorter argument presented in \cite{0711.1713}.  
We explain next the relation between the sets $S^+$, $S^-$ and $ \mathcal{P}$.
\begin{lem} \label{new.capcut}The set $\mathcal{P}$ of the pivotal edges is the intersection between $S^+$ and $S^-$.
\end{lem}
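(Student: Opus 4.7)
The plan is to prove the two inclusions $\mathcal{P} \subseteq S^+ \cap S^-$ and $S^+ \cap S^- \subseteq \mathcal{P}$ separately. The common geometric input I would establish first is that the bottom side $B$ sits inside the unique infinite $*$-connected component $G$ of $O(T)^c$: indeed, $\mathbb{Z}^d \setminus \Lambda$ is infinite and $*$-connected, is contained in $O(T)^c$ since $O(T) \subseteq \Lambda$, and must therefore lie in $G$; since $T \nlongleftrightarrow B$ in $\omega$, we have $B \subseteq O(T)^c$, and $B$ is $*$-adjacent to $\mathbb{Z}^d \setminus \Lambda$, forcing $B \subseteq G$. Symmetrically, $T$ will lie in the infinite $*$-component of $O(B)^c$, which I denote $G'$.

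For $\mathcal{P} \subseteq S^+ \cap S^-$, I would take a pivotal edge $e = \{x,y\}$. Such an edge must be closed in $\omega$, and once opened the newly created $T$-to-$B$ open path must cross $e$, so up to relabelling $x \in O(T)$ and $y \in O(B)$. Any open path from $y$ to $B$ stays in $O(T)^c$, since $O(T) \cap O(B) = \emptyset$, so $y$ lies in the $*$-component of $O(T)^c$ containing $B$, namely $G$. In particular $y \notin O'(T) = O(T) \cup H_1 \cup \cdots \cup H_k$, which gives $e \in \partial^{ext} O'(T)$; combined with $e \subset \Lambda$ this yields $e \in S^+$. Exchanging the roles of $T$ and $B$ yields $e \in S^-$.

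For $S^+ \cap S^- \subseteq \mathcal{P}$, I would take $e = \{x,y\} \in S^+ \cap S^-$. From $e \in S^+$, after relabelling if needed, $x \in O'(T)$ and $y \notin O'(T)$; since $y$ is a neighbour (hence a $*$-neighbour) of $x$ and $y \in O(T)^c$, $y$ lies in $G$. If $x$ belonged to some $H_i$, then $x$ and $y$ would be $*$-neighbours of $O(T)^c$ sitting in distinct $*$-components, which is impossible, so $x \in O(T)$; the edge $e$ must then be closed, for otherwise $y$ would be connected to $T$ through $x$, contradicting $y \in O(T)^c$. Applying the same analysis to $e \in S^-$ places one endpoint in $O(B)$ and the other in $G'$; this $O(B)$-endpoint cannot be $x$, since $x \in O(T) \cap O(B)$ would contradict $T \nlongleftrightarrow B$, so $y \in O(B)$. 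Opening $e$ then produces an open path from $T$ through $x$, across $e$, through $y$, to $B$, so $e$ is pivotal.

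The step I expect to require the most care is the $*$-connectivity bookkeeping underpinning both halves: specifically, pinning down which $*$-component of $O(T)^c$ (or of $O(B)^c$) each endpoint is allowed to belong to. Everything hinges on the single observation that $\mathbb{Z}^d \setminus \Lambda$ is infinite and $*$-connected, which anchors the unique infinite component $G$ and thereby forces the "outside" endpoint of any edge of $S^+$ into $G$. Once this topological fact is in hand, the proof reduces to a direct case analysis tracking which of the sets $O(T)$, $H_i$, $G$ each endpoint can occupy.
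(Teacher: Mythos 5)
Your proof is correct, and for the harder inclusion $S^+\cap S^-\subset\mathcal{P}$ it follows the same idea as the paper (one endpoint reaches $T$, the other reaches $B$, through open paths), but it is actually more careful than the paper's one-line version: the paper says ``$S^+$ consists of the boundary edges of $O(T)$'', whereas $S^+$ is really $\partial^{ext}O'(T)\cap\Lambda$, and you correctly rule out the possibility that the $O'(T)$-endpoint of $e$ sits in one of the filled holes $H_i$ rather than in $O(T)$ itself. For the easier inclusion $\mathcal{P}\subset S^+\cap S^-$ you take a genuinely different route. The paper's argument is indirect: it uses two facts stated just before the lemma, namely that every pivotal edge belongs to every closed cut, and that each of $S^+$ and $S^-$ contains a closed cut, so a pivotal edge lies in the cut inside $S^+$ and in the cut inside $S^-$. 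You instead argue directly from the definition, placing the two endpoints of a pivotal edge in $O(T)$ and in $G=O'(T)^c$ respectively (using the preliminary observation that $B$, and hence the $O(B)$-cluster, sits in the infinite $*$-component $G$), which gives $e\in\partial^{ext}O'(T)$ with no detour through cuts. Both arguments are valid; the paper's is shorter once one accepts the cut facts, while yours is self-contained, makes explicit the topological role of the unique infinite $*$-component of $O(T)^c$, and avoids appealing to the separate (unproved) claims that $S^\pm$ each contain a cut.
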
 
\begin{proof}
We have the inclusion $\mathcal{P}\subset S^+\cap S^-$ since all the pivotal edges are in all the cuts. Both $S^+$ and $S^-$ contain a cut. We consider next an edge $e$ in $S^+\cap S^-$. Since $S^+$ consists of the boundary edges of $O(T)$, there is an open path between $T$ and $e$. The same result holds for $S^-$. Therefore, there is a path between $T$ and $B$ whose edges other than $e$ are open. By opening $e$, we realise the event $\{T\longleftrightarrow B\}$. In other words, the edge $e$ is included in $\mathcal{P}$. We conclude that $S^+\cap S^- \subset \mathcal{P}$.
\end{proof}

We also need a combinatoric result on the $*$-connectedness in dimension $d$.
\begin{lem}
\label{new.count}In the $d$-dimensional lattice, the number of $*$-neighbours of an edge $e$ is 
$$\alpha(d) = 3^d+4(d-1)3^{d-2}-1.$$
\end{lem}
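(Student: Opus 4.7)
The plan is to exploit the translation and rotation invariance of $\mathbb{Z}^d$, pin down a canonical reference edge, and then classify candidate $*$-neighbours by the coordinate axis they are parallel to. Without loss of generality, take $e=\{0,\mathbf{e}_1\}$, so $m_e=(\tfrac12,0,\dots,0)$. Every other edge $f$ of $\mathbb{E}^d$ is parallel to some axis $i\in\{1,\dots,d\}$, and its midpoint has the form $m_f=(k_1,\dots,k_{i-1},k_i+\tfrac12,k_{i+1},\dots,k_d)$ with integer $k_j$. The condition $\|m_e-m_f\|_\infty\leq 1$ then becomes a system of one-dimensional constraints that I would solve coordinate by coordinate.

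I would split into two cases. In the parallel case $i=1$, the vector $m_f-m_e$ lies in $\mathbb{Z}^d$ and the constraint reduces to $|k_1|\leq 1$ and $|k_j|\leq 1$ for $2\leq j\leq d$, giving exactly $3^d$ edges (the edge $e$ itself included). In the perpendicular case $i\neq 1$, the first coordinate of $m_f-m_e$ equals $k_1-\tfrac12$, which forces $k_1\in\{0,1\}$; the $i$-th coordinate equals $k_i+\tfrac12$, which forces $k_i\in\{-1,0\}$; and each of the remaining $d-2$ integer coordinates must lie in $\{-1,0,1\}$. That produces $2\cdot 2\cdot 3^{d-2}=4\cdot 3^{d-2}$ edges per perpendicular axis, and summing over the $d-1$ choices of $i\neq 1$ yields $4(d-1)3^{d-2}$ edges.

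Adding the two contributions gives $3^d+4(d-1)3^{d-2}$ edges $f$ with $\|m_e-m_f\|_\infty\leq 1$. By the convention used in the paper, the $*$-neighbours of $e$ do not include $e$ itself, so subtracting the single edge corresponding to $m_f=m_e$ yields the stated value $\alpha(d)=3^d+4(d-1)3^{d-2}-1$.

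There is essentially no difficulty beyond careful bookkeeping; the only point to watch is to make sure that the asymmetric constraints $k_1\in\{0,1\}$ and $k_i\in\{-1,0\}$ in the perpendicular case are counted correctly (they are forced by the fact that $m_e$ has a half-integer in direction $1$ while $m_f$ has one in direction $i$), and to remember to subtract $1$ so that $e$ is not counted as its own $*$-neighbour. No probabilistic input is needed; the statement is purely geometric.
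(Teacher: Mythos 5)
Your proof is correct and follows essentially the same route as the paper: both split the candidate edges $f$ into those parallel to $e$ (yielding $3^d$ midpoint lattice choices, one of which is $e$ itself) and those perpendicular to $e$ (yielding $4(d-1)3^{d-2}$, which the paper phrases as the edges of the two $(d-1)$-cubes of side length $2$ centred at the endpoints of $e$). You simply carry out the perpendicular count via explicit coordinate constraints rather than invoking the cube picture, but the decomposition and the final bookkeeping are the same.
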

\begin{proof}
An $*$-neighbour edge $f$ of $e$ is either parallel to $e$ or belongs to the $(d-1)$-cube centred at a vertex of $e$ and of side-length $2$ perpendicular to $e$. For the edges that are parallel to $e$, the distance between their centres is $1$ and there are $3^d-1$ such edges. A $(d-1)$-cube of side-length $2$ has $2(d-1)3^{d-2}$ edges. Hence there are $
3^d+4(d-1)3^{d-2}-1$
$*$-neighbours of $e$.
\end{proof}

We now prove the proposition~\ref{new.distpivot}. The main idea of the proof is to observe a long closed path outside of a cut whenever a pivotal edge is isolated. We use then the BK inequality and we conclude with the help of classical arguments of exponential decay.

\begin{proof}[Proof of proposition~\ref{new.distpivot}]
Since there is a pivotal edge which is at distance more than $1$ from the others, there is a cut which contains at least one non pivotal edge. By lemma~\ref{new.capcut}, this cut is not included in $S^-\cap S^+$, thus there are at least two distinct cuts in the configuration.
Let $e$ be an edge of $\mathcal{P}$ which is at distance at least $\kappa c\ln|\Lambda|$ from $\Lambda^c\cup\mathcal{P} \setminus \{e\}$. Let $e'$ be the pivotal edge which is nearest to $e$ or one of them if there are several.  By lemma~\ref{new.connect}, there is a closed $*$-path included in $\partial^{ext}O(T)$ between $e$ and $e'$. This path might exit from the box $\Lambda$, since $\partial^{ext}O(T)$ is defined as the external boundary of $O'(T)$, where $O'(T)$ is seen as a subset of $\mathbb{Z}^d$, not of $\Lambda$.
However, since $e$ is at distance at least $\kappa c\ln|\Lambda|$ from $\mathcal{P}\setminus \{e\}$ and from $\Lambda^c$, the initial portion of the closed $*$-path from its origin until it has travelled a distance $\kappa c\ln|\Lambda|$ is inside the box $\Lambda$, it consists of closed edges which are not pivotal, and therefore, by lemma \ref{new.capcut}, it is also disjoint from the set $S^-$.
Let us denote by $\mathcal{E}(e)$ the event:
$$ \mathcal{E}(e) = \left\lbrace \kern-1.5pt\begin{array}{c}\text{there exists a closed }* \text{-path starting at one }*\text{-neighbour of }e\\
\text{which travels a distance at least }\kappa c\ln|\Lambda|-2d\end{array} \kern-1.5pt\right\rbrace.
$$
From the previous discussion, we conclude that 
$$\oset e\in \mathcal{P}, d(e,\Lambda^c\cup \mathcal{P}\setminus \{e\})\geqslant \kappa c\ln|\Lambda|\cset\cap\oset T\nlongleftrightarrow B\cset  \quad \subset \quad\mathcal{E}(e)\circ \oset T\nlongleftrightarrow B\cset,
$$
where $\circ$ means the disjoint occurrence. Therefore, we have the following inequality:
$$
P_\mathcal{D} \Big(e\in \mathcal{P}, d(e, \Lambda^c\cup\mathcal{P}\setminus \{e\})\geqslant \kappa c\ln|\Lambda|\big) \leqslant P_\mathcal{D}\Big( \mathcal{E}(e)\circ \{T\nlongleftrightarrow B\}\Big).
$$
By the definition of $P_\mathcal{D}$, we have
$$P_\mathcal{D}\Big( \mathcal{E}(e)\circ \{T\nlongleftrightarrow B\}\Big)= \frac{P_p\Big( \mathcal{E}(e)\circ \{T\nlongleftrightarrow B\}\Big)}{P_p\Big(T\nlongleftrightarrow B\Big)}.
$$
Note that the event $\mathcal{E}(e)$ and $\oset T\nlongleftrightarrow B\cset$ are both decreasing.
Applying the BK inequality (see~\cite{grimmett1999percolation}), we get 
$$P_\mathcal{D} \Big(e\in \mathcal{P}, d(e, \Lambda^c\cup\mathcal{P}\setminus \{e\})\geqslant \kappa c\ln|\Lambda|\Big)\leqslant P_p\big(\mathcal{E}(e)\big).
$$
The closed $*$-path in the event $\mathcal{E}(e)$ starts at a neighbour of $e$ and travels a distance at least $\kappa c\ln |\Lambda|-2d$. By this, we mean that there is an Euclidean distance at least $\kappa c\ln|\Lambda|-2d$ from one endpoint of the first edge of the path to one endpoint of the last edge of the path. The distance between the centres of two $*$-neighbouring edges is at most $d$, therefore the number of edges in such a path is at least $$
\frac{1}{d}(\kappa c\ln|\Lambda| -2d -1).$$
We assume that $|\Lambda|\geqslant 3^{6d}$ and we choose $\kappa>1$, whence, for $c\geqslant 1$, $$\kappa c\ln|\Lambda| -2d-1 \geqslant \frac{\kappa c}{2}\ln|\Lambda|.
$$
Hence $$
P_p\big(\mathcal{E}(e)\big)\leqslant (1-p)^{ \displaystyle\frac{\kappa c}{2d}\ln|\Lambda|}\alpha(d)^{\displaystyle\frac{\kappa c}{2d}\ln|\Lambda|}.
$$
We then sum the probability over all the edges $e$ in $\Lambda$. We obtain
$$P_\mathcal{D}\Big(\exists e\in \mathcal{P}, d(e, \Lambda^c\cup\mathcal{P}\setminus \{e\})\geqslant \kappa c\ln|\Lambda|\Big)\leqslant d|\Lambda|^{\displaystyle 1+\frac{\kappa c}{2d}\ln\Big((1-p)\alpha(d)\Big)}.
$$
We choose $\tilde{p}<1$ such that $(1-\tilde{p})\alpha(d)<1$.  There exists a $\kappa>0$ such that, for any $p\geqslant \tilde{p}$ and any $c>0$, we have
$$d|\Lambda|^{\displaystyle 1+\frac{\kappa c}{2d}\ln\Big((1-p)\alpha(d)\Big)}\leqslant \frac{1}{|\Lambda|^c},
$$
and we obtain the desired inequality.
\end{proof}
We state now a corollary of the proposition~\ref{new.distpivot} which controls the distance between any cut present in the configuration and the set $\mathcal{P}$. 
\begin{cor}  There exists $\tilde{p}<1$ and $\kappa>1$ such that, for $p\geqslant \tilde{p}$, for any constant $c\geqslant 1$ and any box $\Lambda$ satisfying $|\Lambda|>3^{6d}$, the following inequality holds:
$$P_\mathcal{D} \Big(\exists C\in \mathcal{C},\exists e\in C, d(e,\mathcal{P}\cup\Lambda^c)\geqslant \kappa c\ln|\Lambda| \Big)\leqslant \frac{1}{|\Lambda|^c}.
$$\label{new.distcut}
\begin{proof}
Let $C$ be a cut and let $e$ be an edge of $C$ such that $d(e,\mathcal{P}\cup\Lambda^c)\geqslant \kappa c\ln |\Lambda|$. There exists a closed $*$-path included in $C$ which connects $e$ to a pivotal edge $f$. Within a distance less than $\kappa c\ln|\Lambda|$ from $e$, there is no pivotal edge. By stopping the path at the first pivotal edge that it encounters or at the first edge intersecting the boundary of $\Lambda$, we obtain a path $(e_1,\dots,e_n)$ without pivotal edge. Suppose that this path encounters the set $S^+$ or the set $S^-$. Let $e_j$ be the first edge of the path which is in $S^+\cup S^-$. By lemma~\ref{new.capcut}, the edge $e_j$ doesn't belong to $S^+\cap S^-$. Without loss of generality, we can suppose that $e_j\in S^+\setminus S^-$. We concatenate $(e_1,\dots,e_j)$ and a closed path in $\partial^{ext}O'(T)$ from $e_j$ to a pivotal edge or to an edge on the boundary of $\Lambda$. We obtain a closed path disjoint from $S^-$. We reuse the same techniques as in the proof of~\ref{new.distpivot} and we obtain the desired result.
\end{proof}
\end{cor}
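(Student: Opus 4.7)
The plan is to adapt the strategy used to prove Proposition~\ref{new.distpivot}: show that the bad event forces the existence of a long closed $*$-path which occurs disjointly from the disconnection event, and then close the proof via the BK inequality together with a union bound over edges.

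Fix an edge $e$ belonging to some cut $C\in \mathcal{C}$ with $d(e, \mathcal{P}\cup \Lambda^c)\geqslant \kappa c\ln|\Lambda|$. Every edge of a cut is closed, and any cut necessarily contains a pivotal edge (which certainly lies outside the forbidden $\kappa c \ln|\Lambda|$-ball around $e$), so I can walk from $e$ along closed $*$-neighbours inside $C$ until reaching the first pivotal edge or the first edge exiting $\Lambda$. This produces a closed $*$-path $(e_1,\dots,e_n)$ of length at least of order $\kappa c\ln|\Lambda|/d$, all of whose edges are non-pivotal. The difficulty is that to invoke BK in the form used in Proposition~\ref{new.distpivot}, I need this path to be disjoint from some witness of the disconnection event, whereas the constructed path might a priori share edges with both $S^+$ and $S^-$.

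To repair this, I would use Lemma~\ref{new.capcut}, which asserts $S^+\cap S^-=\mathcal{P}$. Let $j$ be the first index at which $e_j\in S^+\cup S^-$, if such an index exists; by non-pivotality $e_j$ lies in exactly one of $S^+\setminus S^-$ or $S^-\setminus S^+$, say the former. By Lemma~\ref{new.connect}, the set $\partial^{ext}O'(T)$ is $*$-connected and all of its edges are closed, so I can prolong $(e_1,\dots,e_j)$ by a closed $*$-path inside $\partial^{ext}O'(T)$, stopped at the first pivotal edge or the first edge hitting the boundary of $\Lambda$. The concatenated path is a closed $*$-path starting at a $*$-neighbour of $e$, of length at least $\kappa c\ln|\Lambda|/d - O(1)$, and now entirely disjoint from $S^-$, which is itself a separating set realising $\{T\nlongleftrightarrow B\}$.

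Once this disjoint structure is in hand, the remainder mimics Proposition~\ref{new.distpivot}: apply BK to the bad event composed with $\{T\nlongleftrightarrow B\}$ (both decreasing), bound the number of closed $*$-paths of the prescribed length starting at a $*$-neighbour of $e$ by $\alpha(d)^{\kappa c\ln|\Lambda|/(2d)}$ via Lemma~\ref{new.count}, use the factor $(1-p)^{\kappa c\ln|\Lambda|/(2d)}$ for the probability that such a path is closed, and finally union bound over the at most $d|\Lambda|$ choices of $e$. Choosing $\tilde{p}$ so that $(1-\tilde{p})\alpha(d)<1$ and $\kappa$ large enough to absorb the linear $|\Lambda|$ prefactor and the additive $O(d)$ slack in the path length collapses the estimate to $1/|\Lambda|^c$. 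The genuinely new ingredient beyond Proposition~\ref{new.distpivot} is the rerouting through $\partial^{ext}O'(T)$ in the preceding paragraph; I expect that to be the main obstacle, since everything else reduces to the same BK and $*$-counting machinery.
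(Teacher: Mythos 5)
Your proposal follows essentially the same route as the paper: walk along the cut $C$ away from $e$ to get a long non-pivotal closed $*$-path, and if it hits $S^+\cup S^-$, use Lemma~\ref{new.capcut} to see it meets only one of $S^\pm$ and reroute through $\partial^{ext}O'(T)$ (Lemma~\ref{new.connect}) to obtain a long closed $*$-path disjoint from the other, then conclude via BK and $*$-path counting exactly as in Proposition~\ref{new.distpivot}. This is the paper's own argument.
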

We shall also study the case where there is no pivotal edge in a configuration.
\begin{prop} \label{new.nopivot}There exists a constant $\tilde{p}<1$, such that, 
$$ \forall p\geqslant \tilde{p}\quad \forall \Lambda\qquad P_\mathcal{D}\left(\mathcal{P} = \emptyset\right)\leqslant d|\Lambda|\exp\left(-D\right),
$$
where $D$ is the diameter of $T$ (or $B$).
\end{prop}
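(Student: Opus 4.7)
The plan is to imitate the strategy of Proposition~\ref{new.distpivot}: on the event $\{\mathcal{P}=\emptyset,\, T\nlongleftrightarrow B\}$, I will exhibit a long closed $*$-path which occurs disjointly from a cut realising the disconnection, then apply the BK inequality together with a union bound over the starting edge.

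The first step is geometric. By lemma~\ref{new.capcut} we have $\mathcal{P}=S^+\cap S^-$, so the event $\{\mathcal{P}=\emptyset\}$ forces $S^+$ and $S^-$ to be disjoint. By lemma~\ref{new.connect} the set $S^-$ is $*$-connected, and I claim that its Euclidean diameter is at least $D$. Indeed, since $T\nlongleftrightarrow B$, the cluster $O'(B)$ does not reach $T$; hence for each vertex $x\in B$, the vertical column joining $x$ to $T$ must meet $\partial^{ext}O'(B)$ in at least one edge, which lies in $S^-$. The horizontal projection of $S^-$ therefore covers $B$, so $S^-$ contains two edges at Euclidean distance at least $D$. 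Since the centres of $*$-neighbouring edges are within Euclidean distance $d$, the $*$-path inside $S^-$ joining these two edges contains at least $D/d$ edges.

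Now for the probabilistic step. On $\{\mathcal{P}=\emptyset,\,T\nlongleftrightarrow B\}$, the set $S^+$ contains a closed cut realising $\{T\nlongleftrightarrow B\}$ using edges disjoint from the long closed $*$-path found inside $S^-$. Denoting by $\mathcal{G}_e$ the decreasing event that there exists a closed $*$-path of at least $D/d$ edges starting at $e$, this yields the inclusion
\[
\{\mathcal{P}=\emptyset\}\cap\{T\nlongleftrightarrow B\}\ \subset\ \bigcup_{e\subset \Lambda}\bigl(\mathcal{G}_e\circ \{T\nlongleftrightarrow B\}\bigr).
\]
The BK inequality combined with a union bound gives
\[
P_p\bigl(\mathcal{P}=\emptyset,\,T\nlongleftrightarrow B\bigr)\ \leq\ \sum_{e\subset\Lambda} P_p(\mathcal{G}_e)\cdot P_p\bigl(T\nlongleftrightarrow B\bigr),
\]
and lemma~\ref{new.count} yields $P_p(\mathcal{G}_e)\leq (\alpha(d)(1-p))^{D/d}$. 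Since there are at most $d|\Lambda|$ starting edges, dividing by $P_p(T\nlongleftrightarrow B)$ produces
\[
P_\mathcal{D}(\mathcal{P}=\emptyset)\ \leq\ d|\Lambda|\bigl(\alpha(d)(1-p)\bigr)^{D/d},
\]
and choosing $\tilde{p}<1$ so that $\alpha(d)(1-\tilde p)\leq e^{-d}$ yields the announced bound.

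The main delicate point is the geometric lower bound on the $*$-diameter of $S^-$: it rests on combining the conditioning $T\nlongleftrightarrow B$ (which forces $O'(B)$ to have a full ``ceiling'' above every vertex of $B$) with the $*$-connectedness of $S^-$ provided by lemma~\ref{new.connect}. Once this is in hand, the remainder of the argument is a direct transposition of the BK technique used in the proof of Proposition~\ref{new.distpivot}.
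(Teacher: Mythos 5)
Your proof is correct and follows essentially the same route as the paper's: use lemma~\ref{new.capcut} to infer that $\{\mathcal{P}=\emptyset\}$ forces $S^+$ and $S^-$ to be disjoint, exhibit a long closed $*$-path in one of them occurring disjointly from a closed cut in the other, then conclude via the BK inequality, a union bound over starting edges, and a Peierls-type choice of $\tilde p$. The only difference is one of detail: the paper simply asserts that each of $S^\pm$ contains a closed $*$-path travelling distance of order $D$, whereas you supply the short geometric justification (every vertical column from $B$ to $T$ must cross $\partial^{\mathrm{ext}}O'(B)$, so $S^-$ spans the base and has diameter at least $D$); this filling-in is sound and welcome.
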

\begin{proof}
Suppose that $\mathcal{P}$ is empty. By lemma~\ref{new.capcut}, the set $S^+$ and the set $S^-$ are then disjoint. Each of them contains a cut. Therefore, there are two disjoint closed $*$-paths travelling a distance at least $|\Lambda|^{1/d}$. By the same reasoning as in the proof of proposition~\ref{new.distpivot}, the $P_\mathcal{D}$ probability of this event can be bounded by 
$$ P_p\left(\exists \gamma \text{ closed path}\subset \Lambda, \gamma \text{ travels a distance at least }D\right).
$$
Since there are at least $D/d$ edges in such a path $\gamma$, this probability is less than 
$$ d|\Lambda|\big(\alpha(d)(1-p)\big)^{\displaystyle\frac{D}{d}}.
$$
There exists $\tilde{p}<1$ such that, for all $\Lambda$, we have
$$\forall p\geqslant \tilde{p}\qquad d|\Lambda|\big(\alpha(d)(1-p)\big)^{\displaystyle\frac{D}{d}}\leqslant d|\Lambda|\exp\left(-D\right).
$$
This yields the desired inequality.
\end{proof}

\section{Speed of the cuts}\label{new.speed}
We state now the crucial proposition which gives a control on the speed of the cuts. 
\begin{prop} \label{new.vconst}
There exists $\tilde{p}<1$, such that for $p\geqslant \tilde{p}$, for any $\ell\geqslant 2$, $t\in \mathbb{N}$, $s\in \oset 0,\dots,|\Lambda|\cset$ and any edge $e\subset \Lambda$ at distance more than $\ell$ from $\Lambda^c$, 
$$
P_\mu\left(\begin{array}{c|c}\begin{array}{c} e\in \mathcal{P}_{t+s}\\\forall r\in [t,t+s] \quad\mathcal{P}_r \neq \emptyset\end{array} &\begin{array}{c} \exists c_t \in \mathcal{C}_t,\, d(e,c_t)\geqslant\ell
\end{array} \end{array}\right)\leqslant \exp(-\ell).
$$
\end{prop}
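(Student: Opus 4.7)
The plan is to construct, on the event of the proposition, a closed space-time $*$-path (STP) from $(e,t+s)$ to some time-edge $(f,t)$ with $f$ close to the cut $c_t$, travelling a spatial distance at least $\ell$, and then to bound the probability of such an STP by a BK-type inequality adapted to the coupled dynamics. The heuristic is that the hypothesis $\mathcal{P}_r\neq\emptyset$ for all $r\in[t,t+s]$ forces a closed cut to persist continuously in $Y$, so the closed separating structures present at times $t$ and $t+s$ must be space-time $*$-connected inside the window $[t,t+s]$.

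For the STP construction, I would use that at every time $r\in[t,t+s]$ the set $\partial^{ext}O'_r(T)$ is closed $*$-connected (lemma~\ref{new.connect}) and contains every pivotal edge (lemma~\ref{new.capcut}). Starting from $(e,t+s)$, I would walk along a closed $*$-path inside $\partial^{ext}O'_{t+s}(T)$ in the direction of $c_t$; each such step is a spatial move in the STP. Whenever the walk meets an edge whose closed status is not inherited from an earlier time slice, I would descend one time unit along the current closed edge (a temporal move) and resume walking along a closed $*$-neighbour at the previous time. Iterating this procedure produces a closed STP ending at a time-edge $(f,t)$ with $f$ close to $c_t$, and since each spatial step advances by at most $d$ in Euclidean distance, the STP contains at least $\ell/d$ spatial moves.

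For the probability estimate, I would enumerate STPs of spatial length $k\geq \ell/d$ starting at $e$ during $[t,t+s]\subset[t,t+|\Lambda|]$. The spatial topology of the STP is a $*$-path of length $k$ starting at $e$, of which there are at most $\alpha(d)^k$ by lemma~\ref{new.count}. For a fixed spatial topology, the transition times are constrained to be update times of the graphical construction, and each spatial move requires a fresh closing event produced by an independent Bernoulli sample $B_r=0$, with probability $1-p$. A BK-type argument applied to these disjoint closing events would then yield an upper bound of order $\bigl(C(d)(1-p)\bigr)^{\ell/d}$, which is $\leq \exp(-\ell)$ provided $p$ is sufficiently close to $1$, uniformly in $|\Lambda|$.

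The delicate point, signalled in the introduction, is that pivotal edges remain closed for free through the conditioning, so a naive STP might traverse long portions of $\mathcal{P}_r$, or more generally of $\mathcal{I}_r$, without paying any $(1-p)$ factor, which would ruin the geometric decay. The main obstacle is therefore to organise the STP so that every spatial move is paired with an honest independent closing event outside the pivotal set, and to apply the BK inequality correctly in the coupled $(X,Y)$-dynamics rather than in a single product measure. This is the step I expect to require the most care, and is presumably where the construction of specific space-time paths announced in the introduction comes into play.
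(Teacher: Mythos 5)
Your high-level plan matches the paper's: construct a decreasing closed simple STP from $(e,t+s)$ back to time $t$, observe it must travel distance $\ell$, and bound the probability of any fixed space projection by a BK argument, then sum over the $\alpha(d)^{\mathrm{length}}$ projections. You have also correctly isolated the dangerous point: an edge of the STP may already be closed in $Y_t$ because it sits in a cut, and such an edge costs no closing event during $]t,t+s]$, so a naive BK estimate on the graphical variables $(E_r,B_r)$ collapses. However, you stop at naming the obstacle (``this is the step I expect to require the most care'') and do not supply a mechanism that actually produces a $(1-p)$ factor for those already-closed edges; without that, the geometric decay you claim is unproved.

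The paper's resolution, which is genuinely missing from your proposal, is the following two-part accounting. First, decompose over the number $k$ of edges in $\mathrm{support}(\Gamma)$ that are closed in $Y_t$. Conditioning on $Y_t=y$, Proposition~\ref{new.BKFermeutre} (BK applied to the i.i.d.\ variables $(E_r,B_r)_{r>t}$) gives $\bigl(\tfrac{s(1-p)}{|\Lambda|}\bigr)^{n-k}$ for the $n-k$ edges that are open at time $t$ and must each receive a fresh closing event. Second — and this is the key step you do not have — the event ``$\exists c_t\in\mathcal{C}_t$ with $d(e,c_t)\geqslant\ell$'' depends on $Y_t$ only through edges at distance at least $\ell$ from $e$, while one deliberately truncates the STP as soon as its length reaches $\ell/2d$, so that $\mathrm{support}(\Gamma)$ lies within distance $\ell/2$ of $e$. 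Under the product measure $P_p$, the event that $k$ specified edges of $\mathrm{support}(\Gamma)$ are closed is therefore \emph{independent} of the far-cut event; pushing this factorisation through the conditioning on $\{T\nlongleftrightarrow B\}$ (which cancels against the cut event) shows that under $P_\mathcal{D}$, given the far cut, the probability that $k$ particular nearby edges are closed is exactly $(1-p)^k$. Summing $\binom{n}{k}\bigl(\tfrac{s(1-p)}{|\Lambda|}\bigr)^{n-k}(1-p)^k$ over $k$ yields $\bigl((1+s/|\Lambda|)(1-p)\bigr)^n$, which gives the exponential decay. Your sketch leaves both the $k$-decomposition and the spatial-independence cancellation unstated, and these are precisely what make the estimate work; the truncation of the STP at half the distance $\ell$ (which you also omit) is what makes the independence argument legitimate.
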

\noindent To prove this result, we will construct a STP associated to the movement of the pivotal edges and then show that the probability to have such a long STP decreases exponentially fast as the length of the path grows. 
\subsection{Construction of the STP}
We start by defining some properties of a STP. In the rest of the paper, unless otherwise specified, all the closed paths (and the closed STPs) are defined with the relation $*$ and the open paths (and the open STPs) are defined with the usual relation.
\begin{defi} A STP $(e_1,t_1),\dots,(e_n,t_n)$ is increasing (respectively decreasing) if
$$ t_1\leqslant \dots \leqslant t_n \,(\text{resp. } t_1\geqslant\dots \geqslant t_n).
$$
If a STP is increasing or decreasing, we say that it is monotone.
\end{defi}
\begin{figure}[ht]
\centering{
\resizebox{60mm}{!}{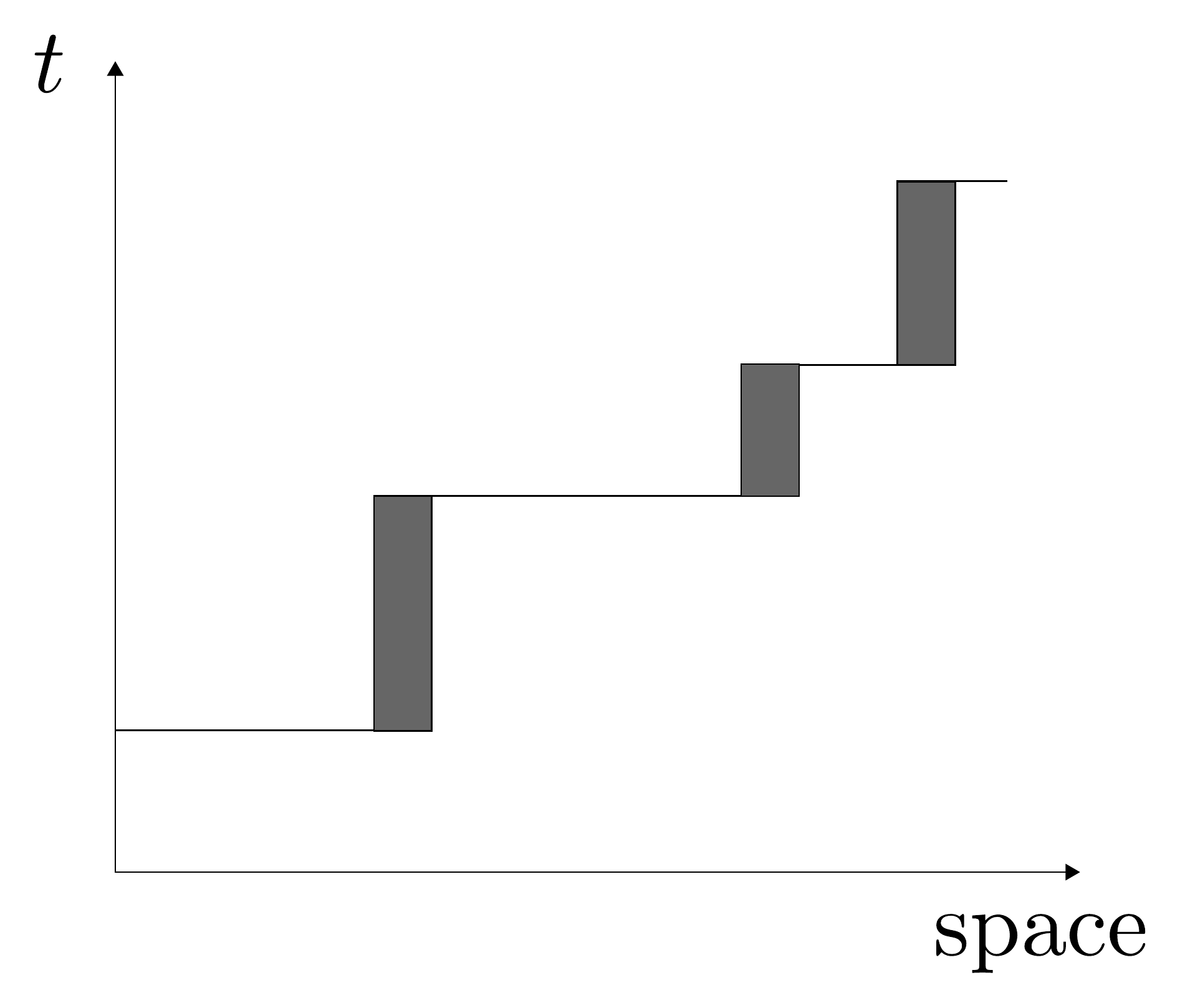}
\caption{An increasing STP with its time change intervals in gray}
\label{new.fig:stc_c}
}
\end{figure}

\begin{defi} A closed STP $(e_1,t_1),\dots,(e_n,t_n)$ in $X$ (respectively $Y$) is called simple if each edge is visited only once or it is opened at least once between any two consecutive visits, i.e., for any $i,j$ in $\oset 1,\dots, n\cset$ such that $|i-j|\neq 1$,
$$ (e_i = e_j\quad t_i<t_j)\quad \Longrightarrow \quad \exists s\in ]t_i,t_j]\quad X_{s}(e_i) = 1\quad  (\text{resp. }Y_{s}(e_i) = 1).
$$
\end{defi}
\begin{figure}[ht]
\centering{
\resizebox{60mm}{!}{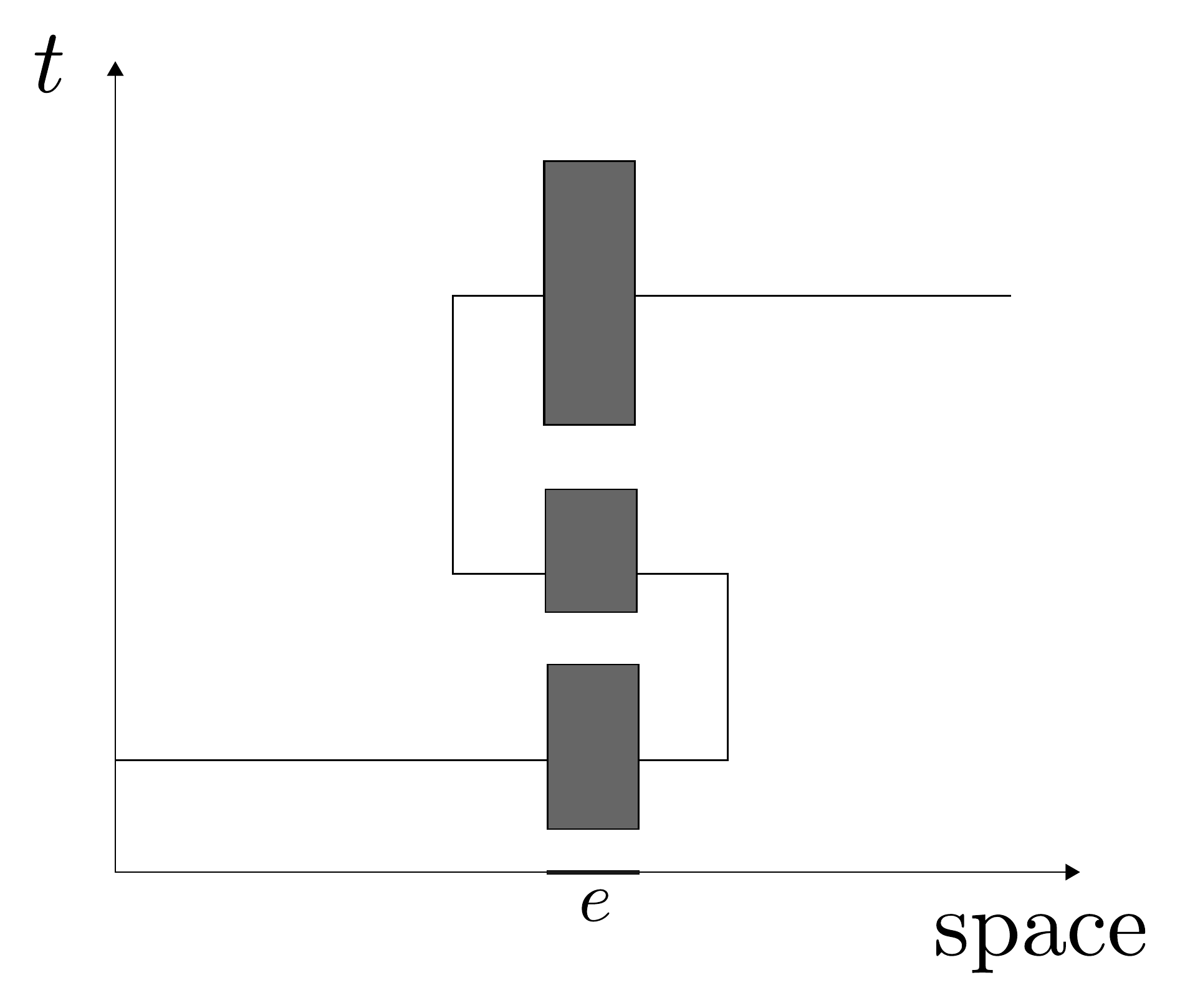}
\caption{A simple STP, intervals of closure of the edge $e$ in gray}
\label{new.fig:stc_s}
}
\end{figure}
\noindent We show next that two pivotal edges occurring at different times are connected through a monotone simple STP closed in $Y$. 
\begin{prop}\label{new.stc} Let $s$ and $t$ be two times such that $s<t$. We suppose that $\mathcal{P}_r$ is not empty for all $r\in [s,t]$. Let $f\in \mathcal{P}_s$ and $e\in \mathcal{P}_t$. Then there exists a decreasing simple STP $\gamma$ closed in $Y$ from $(e,t)$ to $(f,s)$ or a decreasing simple STP closed in $Y$ from $(e,t)$ to $(g,\alpha)$ where $g$ is an edge meeting the boundary $\partial\Lambda$ of $\Lambda$ and $\alpha\in [s,t]$.
\end{prop}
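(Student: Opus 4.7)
The plan is to construct the STP by a backward walk from $(e,t)$, alternating time-decrease moves along a single edge with $*$-neighbour hops at fixed times. The key structural inputs are lemma~\ref{new.connect} and lemma~\ref{new.capcut}: at every intermediate time $r\in[s,t]$, the set $\mathcal{P}_r$ is non-empty and sits inside a closed $*$-connected set (either $S^+_r$ or $S^-_r$), so any two pivotal edges at a single time are joined by a closed $*$-path that only escapes through $\partial\Lambda$.

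I would proceed by backward induction on $t-s$. The base case $t=s$ reduces to producing a closed $*$-path from $e$ to $f$ inside $S^+_s$: if this path stays in $\Lambda$ it is already a (constant time) decreasing STP ending at $(f,s)$; otherwise I truncate at the first edge $g$ meeting $\partial\Lambda$ and output $(g,s)$. For the inductive step with $t>s$, I examine the single edge $E_t$ that is updated in going from $Y_{t-1}$ to $Y_t$. If $e\in\mathcal{P}_{t-1}$, then $e$ is closed in both configurations and I prepend $(e,t-1)$ to the STP supplied by the inductive hypothesis applied to $(e,t-1)$ and $(f,s)$.

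The delicate case is $e\notin\mathcal{P}_{t-1}$. I first check that $e$ remains closed in $Y_{t-1}$: this is immediate when $E_t\neq e$, and when $E_t=e$ a short case analysis using the $Y$-dynamics together with pivotality of $e$ at time $t$ shows $Y_{t-1}(e)=0$ as well. I then hop at time $t-1$: since $S^+_{t-1}$ is closed, $*$-connected, and contains $\mathcal{P}_{t-1}$, and since $e$ differs from the corresponding set $S^+_t$ only through the single flipped edge $E_t$, a closed $*$-path in $Y_{t-1}$ links $e$ either to some $e'\in\mathcal{P}_{t-1}$ or to a boundary edge $g$. In the former case I iterate via the inductive hypothesis on $(e',t-1)$ and $(f,s)$; in the latter I stop and output $(g,t-1)$. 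Simplicity is then enforced by a standard loop erasure: whenever an edge is repeated at indices $i<j$ without being opened in between, I delete the intermediate time-edges, which preserves the decreasing character of the time coordinates.

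The main obstacle is the hop in the non-pivotal case. One has to argue, using the local effect of a single-edge update on the sets $S^+$ and $S^-$, that the old position $e$ is still reachable in $Y_{t-1}$ from $\mathcal{P}_{t-1}$ by a closed $*$-path, and this is exactly where Pisztora-type $*$-connectedness of external boundaries is essential: a single edge cannot split $S^+$ into far-apart pieces, so $e$ sits within $*$-distance one of $S^+_{t-1}$ and the hop is legal. Once this step is secured, the construction terminates in at most $t-s$ inductive steps, and truncation at the first contact with $\partial\Lambda$ accounts for the boundary alternative in the statement.
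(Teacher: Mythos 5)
Your strategy — a backward one-step induction on $t-s$, prepending a unit time-change when $e\in\mathcal{P}_{t-1}$ and hopping along a closed $*$-path at time $t-1$ otherwise — is genuinely different from the paper's. The paper does not march back one step at a time; it defines $\theta(t)$ as the birth time of the oldest edge of $\mathcal{P}_t$, proves $\theta(t)<t$, performs a single long time-change from $t$ down to $\theta(t)$ on an edge that has been pivotal (hence closed in $Y$) throughout, and iterates on the strictly decreasing sequence $\theta(t),\theta^2(t),\dots$. This sidesteps entirely the question of what the boundary sets look like at intermediate times.

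The gap in your argument is in the hop. Your justification is that ``$e$ differs from $S^+_t$ only through the single flipped edge $E_t$'' and that ``a single edge cannot split $S^+$ into far-apart pieces, so $e$ sits within $*$-distance one of $S^+_{t-1}$.'' This is false: opening $E_t$ can merge $O(T)_{t-1}$ with a large open cluster, so $O(T)_t$ (and hence $S^+_t$) can differ drastically from $O(T)_{t-1}$ and $S^+_{t-1}$, and the new pivotal edge $e$ can lie arbitrarily far from $S^+_{t-1}$. The hop is in fact correct, but for a more delicate reason that you do not give: in the non-pivotal case one must first check that only an opening move can create $e\in\mathcal{P}_t\setminus\mathcal{P}_{t-1}$ (closing never creates a pivotal edge, and $E_t=e$ leads to a contradiction via the $Y$-dynamics), and then one must observe that, because $E_t$ is not pivotal at time $t-1$, the opened edge can be adjacent to at most one of $O(T)_{t-1}$ and $O(B)_{t-1}$; consequently \emph{one} of the two sets $O(T)$, $O(B)$ is unchanged between $t-1$ and $t$, so one of $S^\pm_{t-1}=S^\pm_t$ still contains $e$, is $*$-connected, closed, and contains $\mathcal{P}_{t-1}=S^+_{t-1}\cap S^-_{t-1}$, which is what licenses the hop. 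As stated, your proposal replaces this argument with an incorrect locality claim, so it does not close the step. You would also need to make the case analysis explicit to rule out the other update types. Once those points are repaired, your construction does yield a decreasing closed STP, and the loop-erasure for simplicity is standard; but as written the proof has a real hole exactly where you flagged ``the main obstacle.''
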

\begin{proof}
By lemma~\ref{new.connect}, the edges of $\mathcal{P}_t$ are connected by a $*$-path which might possibly exit from $\Lambda$, but whose edges included in $\Lambda$ are closed in $Y_t$. We consider the function $\theta(t)$ giving the time when the oldest edge of $\mathcal{P}_t$ appeared, i.e.,
$$ \theta(t) = \min_{\varepsilon \in \mathcal{P}_t}\min\big\{ \,r\leqslant t: \varepsilon \in \mathcal{P}_r, \varepsilon \notin \mathcal{P}_{r-1}, \forall \alpha\in[r,t] ,\varepsilon\in \mathcal{P}_\alpha \big\}.
$$
We denote by $e_1$ one of the edges realizing the minimum $\theta(t)$. We claim that $\theta(t)< t$. Indeed, suppose first that an edge closes at time $t$. Then a pivotal edge cannot be created at time $t$ and all the pivotal edges present at time $t$ were also pivotal at time $t-1$. Therefore $\theta(t)\leqslant t-1< t$. Suppose next that an edge opens at time $t$. Let us consider an edge $\varepsilon$ of $\mathcal{P}_{t-1}$, which is assumed to be not empty. At time $t-1$, there is one open path which connects $\varepsilon$ to $T$ and another one which connects $\varepsilon$ to $B$. Since one edge opens at time $t$, these two paths remain open at time $t$. Therefore $\varepsilon$ is still pivotal at time $t$. We have thus $\mathcal{P}_{t-1}\subset\mathcal{P}_t$ and it follows that $\theta(t)\leqslant t-1 <t$. We have proved that $\theta(t)< t$.

If $\theta(t)\leqslant s$, we consider the STP obtained by connecting the path between $ (e,t),(e_1,t)$ and the path between $(e_1,s),(f,s)$ with a time change from $t$ to $s$ on the edge $e_1$. If this STP does not encounter $\partial\Lambda$ then it answers the question. If it encounters $\partial\Lambda$, then we stop the STP at the first edge intersecting $\partial\Lambda$, we obtain a STP satisfying the second condition of the proposition. Suppose now that $\theta(t)> s$. We consider the edge $e_1$ at time $\theta(t)$. By construction, the edge $e_1$ belongs to $\mathcal{P}_{\theta(t)}$. Moreover, using lemma~\ref{new.connect}, $(e_1,\theta(t))$ is connected to $(e,t)$ by a STP consisting of a closed path at time $t$ and a time change from $t$ to $\theta(t)$ on the edge $e_1$.
We take $(e_1,\theta(t))$ as the new starting point. We repeat the procedure above and we obtain a sequence of times $(\theta(t),\theta(\theta(t)),\dots,\theta^{(i)}(t),\dots)$ by defining iteratively
$$
\theta^{i+1}(t) = \min_{\varepsilon\in \mathcal{P}_{\theta^{i}(t)}} \oset r \leqslant \theta^i(t): \, \varepsilon\in \mathcal{P}_r,\,\varepsilon\notin\mathcal{P}_{r-1},\forall \alpha \in [r,\theta^i(t)],\varepsilon\in \mathcal{P}_\alpha\cset.
$$
For each index $i$, we choose an edge $e_i\in\mathcal{P}_{\theta^{i-1}(t)}$ which becomes pivotal at time $\smash{\theta^{(i)}(t)}$. From the argument above, we obtain a strictly decreasing sequence $$t> \theta(t)>\dots> \theta^i(t).$$ Therefore, there exists an index $k$ such that $$\theta^{k+1}(t)\leqslant s<\theta^k(t).$$ 
For $i\in \{0,\dots,k-1\}$, the edge-time $(e_i,\theta^i(t))$ is connected to $(e_{i+1},\theta^{i+1}(t))$ by a decreasing STP $\gamma_i$ which is closed in $Y$. By concatenating these STPs, we obtain a decreasing STP between $(e,t)$ and $(e_k,\theta^{k}(t))$. At time $\theta^k(t)$, there exists also a closed path $\rho$ between $e_k$ and $e_{k+1}$. We stop the time change at $s$ on the edge $e_{k+1}$ in order to arrive at an edge of $\mathcal{P}_s$. By lemma~\ref{new.connect}, there is a closed path $\rho$ between $e_{k+1}$ and $f$ at time $s$. Therefore, the STP
$$(e,t),\gamma_0,(e_1,\theta(t)),\gamma_1,\dots,\gamma_{k-1},(e_k,\theta^k(t)),\rho,(e_{k+1},\theta^k(t)),(e_{k+1},s),\rho,(f,s)
$$
is decreasing, closed in $Y$ and it connects $(e,t)$ and $(f,s)$. If this STP exits the box $\Lambda$, then the initial portion starting from $e$ until the first edge intersecting $\partial\Lambda$ satisfies the second condition of the proposition.

In order to obtain a STP which is simple in $Y$, we consider the following iterative procedure to modify a path. Let us denote by $(e_i,t_i)_{0\leqslant i\leqslant N}$ the STP obtained previously. Starting with the edge $e_0$, we examine the rest of the edges one by one. Let $i\in \oset 0,\dots,N\cset$. Suppose that the edges $e_0,\dots,e_{i-1}$ have been examined and let us focus on $e_i$. We encounter three cases:
\begin{itemize}[leftmargin = 0.4cm]
\item For every index $j\in \{i+1,\dots,N\}$, we have $e_j\neq e_i$. Then, we don't modify anything and we start examining the edge $e_{i+1}$.
\item There is an index $j\in\{i+1,\dots,N\}$ such that $e_i = e_j$, but for the first index $k>i+1$ such that $e_i = e_k$, there is a time $\alpha\in ]t_k,t_i[$ when $Y_\alpha(e_i) = 1$. Then we don't modify anything and we start examining the next edge $e_{i+1}$.
\item There is an index $j\in\{i+1,\dots,N\}$ such that $e_i = e_j$ and for the first index $k> i+1$ such that $e_i = e_k$, we have $Y_\alpha(e_i) = 0$ for all $\alpha\in ]t_k,t_i[$. In this case, we remove all the time-edges whose indices are strictly between $i$ and $k$. We then have a simple time change between $t_i$ and $t_k$ on the edge $e_i$. We continue the procedure from the index $e_k$.
\end{itemize}
The STP becomes strictly shorter after every modification, and the procedure will end after a finite number of modifications. We obtain in the end a simple path in $Y$. Since the procedure doesn't change the order of the times $t_i$, we still have a decreasing path.
\end{proof}

\subsection{The BK inequality applied to a STP}
Before embarking in technicalities, let us discuss the differences between the processes $(X_t)_{t\in\mathbb{N}}$ and $(Y_t)_{t\in\mathbb{N}}$. Let $(e,t)$ be a closed time-edge in $Y$. Since there is no constraint in the process $X$, the edge $e$ can be open in the configuration $X_t$. If the edge $e$ is open in $X_t$, then it belongs to $\mathcal{I}_t$. Now let us consider a time $t$ for which $E_t = e$ and $B_t = 0$. Closing an edge doesn't create an open path between $T$ and $B$, thus the edge $e$ will be closed in both $X_t$ and $Y_t$. On the contrary, for a time $t$ such that $B_t = 1$ and $E_t = e$ is pivotal at time $t-1$, the edge $e$ can be opened in the process $(X_t)_{t\in\mathbb{N}}$ but it remains closed in the process $(Y_t)_{t\in\mathbb{N}}$. Now let us consider the STP constructed in proposition~\ref{new.stc}. Since the STP is closed in $Y$, each edge $e$ visited by the path is either closed at time $s$ or there is a time $r\in\, ]s,t]$ when $E_r = e$ and $B_r = 0$. In fact, since the STP is simple, then each edge is reopened and closed between two successive visits of the STP. Our first goal is to introduce the necessary notation in order to keep track of all the closing events implied by the STP.

We shall define the space projection of a STP. Given $k\in\mathbb{N}^*$ and a sequence $\Gamma = (e_i)_{1\leqslant i \leqslant k}$ of edges, we say that it has length $k$, which we denote by $\mathrm{length}(\Gamma)= k$, and we define its support
$$\mathrm{support}(\Gamma) = \oset e\subset \Lambda\,:\, \exists i\in \{1,\dots,k\}\quad e_i = e\cset.
$$
Let $\gamma = (e_i,t_i)_{1\leqslant i\leqslant n}$ be a simple STP, the space projection of $\gamma$ is obtained by removing one edge in every time change in the sequence $(e_i)_{1\leqslant i \leqslant n}$. More precisely, let $m$ be the number of time changes in $\gamma$. We define the function $\phi:\{1,\dots,n-m\}\rightarrow \mathbb{N}$ by setting $\phi(1) = 1$ and 
$$
\forall i\in \{\, 1,\dots, n-m\,\}\quad \phi(i+1) = \left\lbrace \begin{array}{ccc}
\phi(i)+1 &\text{if}& e_{\phi(i)} \neq e_{\phi(i)+1}\\
\phi(i)+2 &\text{if}& e_{\phi(i)} = e_{\phi(i)+1}
\end{array} \right..
$$
The sequence $(e_{\phi(i)})_{1\leqslant i \leqslant n-m}$ is called the space projection of $\gamma$, denoted by 
$\mathrm{Space}(\gamma)$. We say that $\mathrm{length}(\mathrm{Space}(\gamma))$ is the length of the STP $ \gamma$, denoted also by $\mathrm{length}(\gamma)$.
We shall distinguish $\mathrm{Space}(\gamma)$ from the support of $\gamma$, denoted by $\mathrm{support}(\gamma)$, which we define as:
$$\mathrm{support}(\gamma) = \mathrm{support }(\mathrm{Space}(\gamma)).
$$ 
We say that a sequence of edges $\Gamma = (e_i)_{1\leqslant i \leqslant k}$ is visitable if there exists a STP $\gamma$ such that $\mathrm{Space}(\gamma) = \Gamma$.

We prove next a key inequality to control the number of $\mathrm{closing}$ events along a simple STP.
\begin{prop}\label{new.BKFermeutre}
Let $\Gamma$ be a visitable sequence of edges and $[s,t]$ a time interval. For any $ k\in \oset 0,\dots, |\mathrm{support}(\Gamma)|\cset$ and any percolation configuration $y$ such that there are exactly $k$ closed edges in $\mathrm{support}(\Gamma)$,
we have the following inequality:
$$P\left(\kern-15pt\begin{array}{c|c}\begin{array}{c}\exists \gamma\text{ decreasing closed}\\\text{ simple STP in }Y 
\text{during }[s,t]\\ \text{such that }\mathrm{Space}(\gamma) = \Gamma 
 \end{array}&Y_s = y\kern-5pt\end{array}\right)\leqslant 
\left(\frac{(t-s)(1-p)}{|\Lambda|}\right)^{\mathrm{length}(\Gamma)-k}.
$$
\end{prop}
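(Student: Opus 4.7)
The plan is to reduce the event of interest to a disjoint occurrence of closing events on the driving i.i.d.\ sequence $(E_r,B_r)_{r\in(s,t]}$ and then invoke the BK inequality on this product probability space. Writing $L=\mathrm{length}(\Gamma)$ and, for each edge $e\in\mathrm{support}(\Gamma)$, $c_e$ for the number of times $e$ appears in $\Gamma$, one has $L=\sum_e c_e$; the whole proof will hinge on a per-edge accounting of the closing events forced by the existence of such a STP.

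First I fix an edge $e\in\mathrm{support}(\Gamma)$ and examine its $c_e$ positions in $\Gamma$. Because the map $\phi$ used to define $\mathrm{Space}$ is strictly increasing and consecutive entries of $\Gamma$ are distinct (a time change contributes only a single edge to $\mathrm{Space}(\gamma)$), these positions lift to pairwise non-adjacent indices in any STP $\gamma$ with $\mathrm{Space}(\gamma)=\Gamma$. The simplicity of $\gamma$ in $Y$ then forces an opening event of $e$ in $Y$ strictly between any two successive visits, so that the $c_e$ visits lie in $c_e$ pairwise disjoint maximal closure intervals of $e$ during $[s,t]$. Among these intervals only the one containing the visit of smallest time can extend back to $s$, and this can happen only when $y(e)=0$; every other closure must be initiated by a closing event $(E_r,B_r)=(e,0)$ at some $r\in(s,t]$. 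Hence at least $n_e:=c_e-\mathds{1}_{y(e)=0}$ such events on $e$ are forced.

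Next I enumerate the forced closings into a finite sequence $(f_1,\dots,f_{L-k})$ of edges of $\mathrm{support}(\Gamma)$ in which each $e$ appears exactly $n_e$ times, using the identity $\sum_e n_e = L-k$. The existence of $\gamma$ then implies the disjoint occurrence $A_{f_1}\circ\cdots\circ A_{f_{L-k}}$ of the cylinder events $A_f:=\{\exists r\in(s,t]:(E_r,B_r)=(f,0)\}$ on the product space of the driving variables; the disjointness of the supporting time slots is automatic, since the successive maximal closures of the same edge are separated by an opening event, and since two different edges cannot be updated at the same time step. The BK inequality applied on the product space of the $(E_r,B_r)_{r\in(s,t]}$, together with the obvious union bound $P(A_f)\leq(t-s)(1-p)/|\Lambda|$, then yields $\bigl((t-s)(1-p)/|\Lambda|\bigr)^{L-k}$ as announced.

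The main obstacle I anticipate is the per-edge accounting carried out in the second paragraph, namely the verification that simplicity separates the closures of a repeated edge into strictly disjoint maximal intervals and that at most one of them per edge can reach time $s$. This is what pins the exponent exactly at $L-k$, without any stray combinatorial factor coming from a choice of which appearance is \emph{saved}. Once this accounting is in place, the BK step is entirely routine.
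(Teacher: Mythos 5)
Your proposal is correct and follows essentially the same approach as the paper's proof: the per-edge accounting $n_e = c_e - \mathds{1}_{y(e)=0}$, the observation that simplicity forces a reopening between successive visits, and the application of BK to the driving variables $(E_r,B_r)$ followed by a union bound over closing times are exactly the ingredients the paper uses (with $v(e)$ versus $v(e)-1$ in place of your $n_e$). The only cosmetic difference is that the paper first bundles the closings into one aggregated event per edge and applies BK to those, then bounds each by a union bound, whereas you unbundle into a flat list of $L-k$ single-closing events before invoking BK; the two are formally equivalent.
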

\begin{proof}
We denote by $n$ the length of $\Gamma$ and $(e_1,\dots,e_n)$ the sequence $\Gamma$.
We consider a STP $\gamma$ such that $\mathrm{Space}(\gamma)=\Gamma$. Since $\gamma$ is closed, all the edges of $\Gamma$ are closed at time $s$ or become closed after $s$. For an edge $e\in \mathrm{support}(\Gamma)$, we denote by $v(e)$ the number of times that $\Gamma$ visits $e$:
$$v(e) = \left|\oset j\in \{1,\dots,n\}\,:\, e_j = e\cset \right|.
$$
Since $\gamma$ is simple, between two consecutive visits, there exists a time when the edge $e$ is open, as illustrated in the figure~\ref{new.fig:vitesse}.

\begin{figure}[ht]
\centering{
\resizebox{100mm}{!}{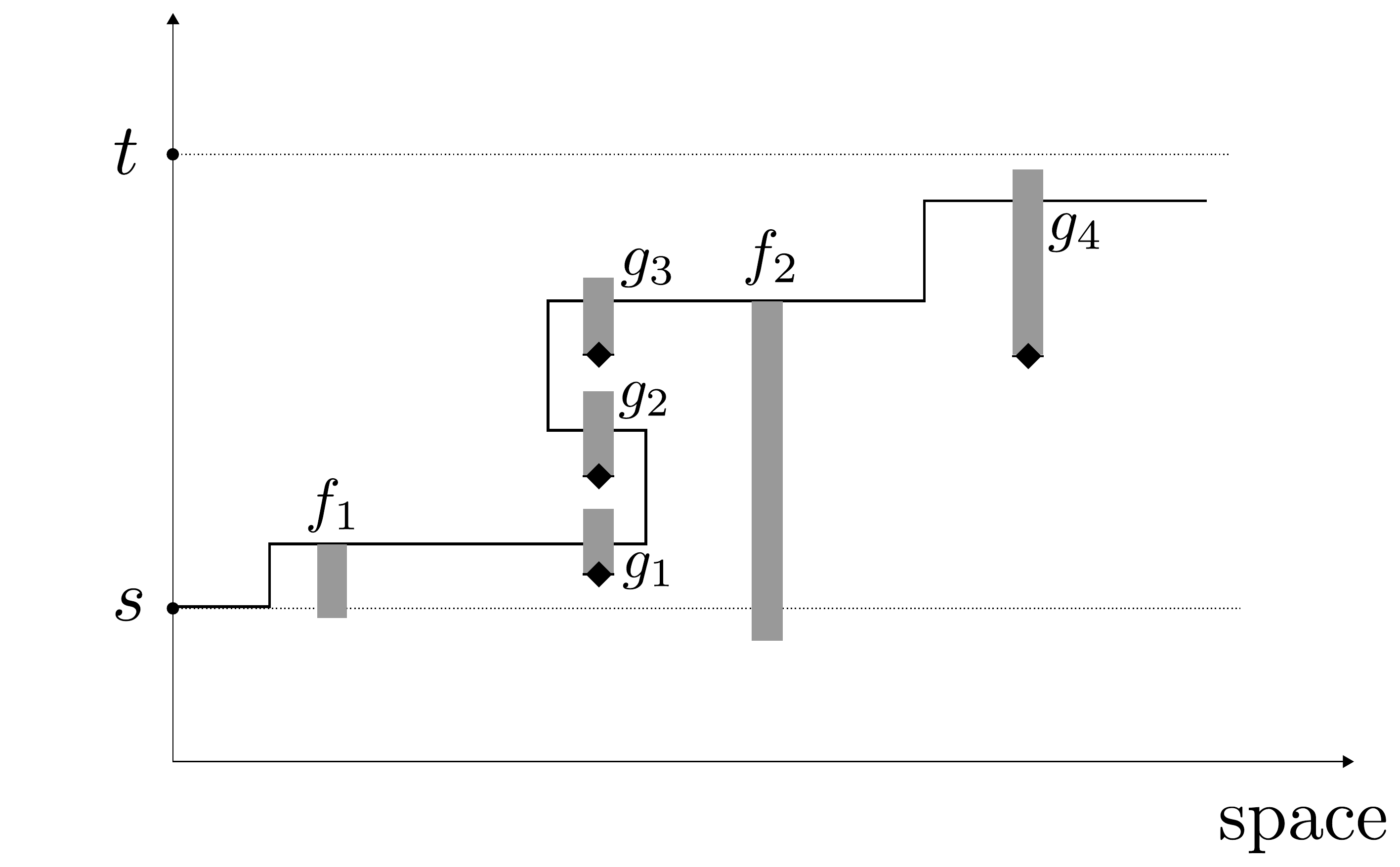}
\caption{The edges $f_1,f_2$ are closed at time $s$. The edges $g_1,g_2,g_3$ and $g_4$ closes after $s$. We see that $g_1 = g_2 = g_3$ and the simplicity of the path implies that the edge opens and closes between two consecutive visits.}
\label{new.fig:vitesse}
}
\end{figure} 
\noindent For each edge $e$ visited by $\gamma$, we distinguish two cases according to the configuration $Y_s$. If $Y_s(e) = 1$, there is a time between $s$ and the first visit when $e$ becomes closed and the edge $e$ closes at least $v(e)$ times during the time interval $]s,t]$. If $Y_s(e) = 0$, then, between the time $s$ and the first visit of $e$, it can remain closed and the edge $e$ becomes closed at least $v(e)-1$ times during $]s,t]$. Notice that the numbers $v(e)$ depend on the sequence $\Gamma$. The probability in the proposition is therefore less than or equal to 
\begin{equation}\label{new.probferme}
P\left(\begin{array}{c|c}\begin{array}{c}\left\lbrace\begin{array}{c}\forall e\in \mathrm{support}(\Gamma)\quad y(e) = 1 \\ e \text{ closes at least }v(e) \text{ times during } ]s,t]
\end{array}\right\rbrace \bigcap \\
\left\lbrace\begin{array}{c}\forall f\in \mathrm{support}(\Gamma)\quad y(f) = 0 \\ f \text{ closes at least }v(f)-1 \text{ times during } ]s,t]
\end{array}\right\rbrace \end{array}
&Y_s = y\end{array}\right).
\end{equation}
Notice that for any edge $e$ such that $y(e) = 1$ (respectively $y(e) = 0$), the event 
$$\oset e \text{ closes at least }v(e) \,(\text{resp. }v(e)-1)\text{ times during } ]s,t]
\cset
$$ depends on the collection of random variables
$$F(e) = \oset (E_r,B_r)\,:\,s < r\leqslant t, \, E_r = e,\, B_r = 0\cset.$$ Therefore these events are independent of the event $\oset Y_s = y\cset$ which depends on $(X_0,Y_0)$ and $\oset(E_r,B_r)\,:\, r \leqslant s\cset$. The probability in~\eqref{new.probferme} is thus equal to
$$P\left(\begin{array}{c}\left\lbrace\begin{array}{c}\forall e\in \mathrm{support}(\Gamma)\quad y(e) = 1 \\ e \text{ closes at least }v(e) \text{ times during } ]s,t]
\end{array}\right\rbrace \bigcap \\
\left\lbrace\begin{array}{c}\forall f\in \mathrm{support}(\Gamma)\quad y(f) = 0 \\ f \text{ closes at least }v(f)-1 \text{ times during } ]s,t]
\end{array}\right\rbrace \end{array}
\right).
$$
For any edge $e\in \mathrm{support}(\Gamma)$, we define $J(e)$ as the set of indices 
$$J(e) = \oset  s<j\leqslant t\,:\, E_j = e, B_j = 0 \cset.$$ We notice that the sets $\big(J(e),{e\in \mathrm{support}(\Gamma)}\big)$ are pairwise disjoint subsets of $\mathbb{N}^*$. By the BK inequality applied to the random variables $(E_t,B_t)_{t\in\mathbb{N}}$, the probability in~\eqref{new.probferme} is less than 
\begin{multline*}\prod_{e\in \mathrm{support}(\Gamma), y(e)=1}P\big( e \text{ closes at least }v(e) \text{ times during } ]s,t] \big)\\ \times
\prod_{f\in \mathrm{support}(\Gamma), y(f)=0}P\big(f \text{ closes at least }v(f)-1 \text{ times during } ]s,t]
\big).
\end{multline*}
We obtain therefore 
\begin{multline}\label{new.prodferme}P\left(\begin{array}{c|c}\begin{array}{c}\exists \gamma\text{ decreasing closed simple STP}\\\text{ in }Y 
\text{during }[s,t]\quad \mathrm{Space}(\gamma) = \Gamma 
 \end{array}&Y_s = y\end{array}\right)\\ 
\leqslant \prod_{e\in \mathrm{support}(\Gamma), y(e)=1}P\left( e \text{ closes }v(e) \text{ times during } ]s,t] \right)\\ \times
\prod_{f\in \mathrm{support}(\Gamma), y(f)=0}P\left(f \text{ closes }v(f)-1 \text{ times during }]s,t]
\right).
\end{multline}
For any edge $e\in \mathrm{support}(\Gamma)$ and any $m\in \mathbb{N}$, we have
\begin{multline*}P\big( e \text{ closes at least }m \text{ times during } ]s,t] \big) \\
\leqslant P\left(\begin{array}{c}\exists J\subset\oset s+1,\dots,t\cset\quad |J| =m\\ \forall j \in J\qquad E_j = e\quad B_j = 0\end{array}\right)\\
\leqslant \left(\frac{(t-s)(1-p)}{|\Lambda|}\right)^m.
\end{multline*}
We use this inequality in~\eqref{new.prodferme} and we obtain 
\begin{multline*}
P\left(\begin{array}{c|c}\begin{array}{c}\exists \gamma\text{ decreasing closed simple STP}\\\text{ in }Y 
\text{ during }[s,t]\quad \mathrm{Space}(\gamma) = \Gamma 
 \end{array}&Y_s = y\end{array}\right)\\ 
\leqslant \left(\frac{(t-s)(1-p)}{|\Lambda|}\right)^{\displaystyle \sum_{e\in \mathrm{support}(\Gamma), y(e)=1}\kern-20pt v(e)+\sum_{f\in \mathrm{support}(\Gamma), y(f)=0}\kern-20pt v(f)-1}\\ = \left(\frac{(t-s)(1-p)}{|\Lambda|}\right)^{n-k}.
\end{multline*}
This is the desired result.
\end{proof}

\subsection{Proof of proposition~\ref{new.vconst}}
Our goal is to study the speed of a cut during a time interval of size $|\Lambda|$. We start by using the results in the previous section to control the length of a STP far from a cut.
\begin{prop}\label{new.estSTC}
Let $\ell$ be a positive constant, $\Gamma$ be a visitable sequence of edges starting from an edge $e$ such that $\Gamma$ travels a distance less than $\ell$ and $t$ be a time. For $s\in\mathbb{N}$, we have the following inequality:
\begin{multline*}P_\mu\left(\kern-15pt\begin{array}{c|c}\begin{array}{c}\exists \gamma\text{ decreasing closed}\\\text{ simple STP in }Y 
\text{during }[t,t+s]\\ \text{such that }\mathrm{Space}(\gamma) = \Gamma 
 \end{array} \kern-5pt&\begin{array}{c}\exists c_t \in \mathcal{C}_t\\ d(e,c_t)\geqslant\ell\end{array}\kern-5pt\end{array}\right)\\ \leqslant 
\left(\left(1+\frac{s}{|\Lambda|}\right)(1-p)\right)^{\mathrm{length}(\Gamma)}.
\end{multline*}
\end{prop}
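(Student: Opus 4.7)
The plan is to reduce proposition~\ref{new.estSTC} to proposition~\ref{new.BKFermeutre} by first conditioning on $Y_t$ and then using the geometric separation between $\mathrm{support}(\Gamma)$ and any far cut to decouple the configuration on $\mathrm{support}(\Gamma)$ under $P_\mathcal{D}$. Write $A$ for the STP event in the statement and $B$ for the conditioning event $\{\exists c_t\in\mathcal{C}_t,\ d(e,c_t)\geqslant\ell\}$, and set $n=\mathrm{length}(\Gamma)$, $V=\mathrm{support}(\Gamma)$. The first step is to apply proposition~\ref{new.BKFermeutre} on the interval $[t,t+s]$, which is legitimate thanks to the Markov property of the graphical construction: conditionally on $Y_t$, the event $A$ is measurable with respect to $(E_r,B_r)_{t<r\leqslant t+s}$, which is independent of $Y_t$. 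Letting $k(y|_V)$ count the closed edges of $y$ within $V$, this yields
$$P_\mu(A\cap B)\leqslant E_\mu\!\left[\mathbf{1}_B\left(\tfrac{s(1-p)}{|\Lambda|}\right)^{n-k(Y_t|_V)}\right].$$

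The decoupling step is the crux. Since $\Gamma$ travels a distance strictly less than $\ell$ from $e$, every edge of $V$ is at distance $<\ell$ from $e$, whereas any cut witnessing $B$ lies at distance $\geqslant\ell$; consequently $V\cap c_t=\emptyset$ and $B$ is measurable with respect to $Y_t|_{V^c}$. Moreover, whenever a configuration on $V^c$ contains such a closed cut, the disconnection $T\nlongleftrightarrow B$ already holds in every completion on $V$. Inserting this observation into the identity $P_\mathcal{D}=P_p(\cdot\,|\,T\nlongleftrightarrow B)$ and using that $P_p$ is a product measure, one obtains
$$P_\mathcal{D}\!\big(Y_t|_V=y_V\,\big|\,Y_t|_{V^c}=y_{V^c}\big)=\tilde P_V(y_V)\qquad\text{for every }y_{V^c}\in B,$$
where $\tilde P_V$ denotes the Bernoulli$(p)$ product measure on $V$.

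Conditioning on $Y_t|_{V^c}$ in the previous display and using that $Y_t\sim P_\mathcal{D}$ at equilibrium gives
$$P_\mu(A\cap B)\leqslant P_\mu(B)\cdot E_{\tilde P_V}\!\left[\left(\tfrac{s(1-p)}{|\Lambda|}\right)^{n-k(Y_V)}\right].$$
Under $\tilde P_V$ the indicators $(\mathbf{1}_{Y_V(f)=0})_{f\in V}$ are i.i.d.\ Bernoulli$(1-p)$, so the expectation on the right equals $\left(\frac{s(1-p)}{|\Lambda|}\right)^n\!\left(p+\frac{|\Lambda|}{s}\right)^{|V|}$. Bounding $p\leqslant 1$ and $|V|\leqslant n$ shows this is at most $\left((1+s/|\Lambda|)(1-p)\right)^n$, and dividing by $P_\mu(B)$ produces the stated inequality. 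The main obstacle is the decoupling step: verifying that conditionally on the existence of a distant cut, the states on $\mathrm{support}(\Gamma)$ become i.i.d.\ Bernoulli under $P_\mathcal{D}$, as this is precisely what produces the extra factor $(1+s/|\Lambda|)$ compared to proposition~\ref{new.BKFermeutre}.
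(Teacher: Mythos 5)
Your proof is correct and follows essentially the same route as the paper: condition on $Y_t$, apply proposition~\ref{new.BKFermeutre} on the interval $[t,t+s]$, and then exploit the geometric separation between $\mathrm{support}(\Gamma)$ and any far cut to reduce the $P_\mathcal{D}$-marginal on $\mathrm{support}(\Gamma)$ to the product Bernoulli measure. The one presentational difference is that you state the decoupling as a clean conditional-distribution identity (the law of $Y|_V$ given $Y|_{V^c}=y_{V^c}$ with $y_{V^c}$ containing a far closed cut is exactly $\tilde P_V$) and then evaluate an exponential moment, whereas the paper fixes the number $k$ of closed edges in the support, sums over $k$, and obtains the same factorisation through an inequality $P_\mathcal{D}(\cdot\cap\text{far cut})=P_p(\cdot)\,P_\mathcal{D}(\text{far cut})$ followed by the binomial sum; the two computations are equivalent and give the same bound.
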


\begin{proof}
We start by rewriting the conditional probability in the proposition as
\begin{multline*}
P_\mu\left(\kern-15pt\begin{array}{c|c}\begin{array}{c}\exists \gamma\text{ decreasing closed}\\\text{ simple STP in }Y 
\text{during }[t,t+s]\\ \text{such that }\mathrm{Space}(\gamma) = \Gamma 
 \end{array} \kern-5pt&\exists c_t \in \mathcal{C}_t,\, d(e,c_t)\geqslant\ell\kern-5pt\end{array}\right)= \\
 \frac{P_\mu\left(\kern-5pt\begin{array}{ccc}\left\lbrace\begin{array}{c}\exists \gamma\text{ decreasing closed}\\\text{ simple STP in }Y 
\text{during }[t,t+s]\\ \text{such that }\mathrm{Space}(\gamma) = \Gamma 
 \end{array} \right\rbrace\kern-5pt&\bigcap&\left\lbrace\begin{array}{c}\exists c_t \in \mathcal{C}_t\\ d(e,c_t)\geqslant\ell\end{array}\right\rbrace\kern-5pt\end{array}\right)}{P_\mu\left(\begin{array}{c}\exists c_t \in \mathcal{C}_t\\ d(e,c_t)\geqslant\ell\end{array}\right)}.
\end{multline*}
We denote by $(e_i,t_i)_{1\leqslant i \leqslant N}$ the time-edges of $\gamma$. Let $n$ be the length of $\gamma$. We consider the case where there are exactly $k$ edges of $\mathrm{support}(\gamma)$ that are closed at time $t$. We shall estimate the following probability:
\begin{equation}\label{new.YFermC}
P_\mu\left(\begin{array}{c}\left\lbrace\begin{array}{c}\exists \gamma \text{ simple decreasing STP}\\ \text{closed in }Y\text{ of length }n\\
\gamma \text{ starts at }(e,t+s) \text{ and ends after }t\\\mathrm{Space}(\gamma) = \Gamma\end{array}\right\rbrace\bigcap \\
\left\lbrace\begin{array}{c}
  \exists F\subset \mathrm{support}(\gamma)\quad |F| =k\\
 \forall f\in F\quad Y_t(f) = 0\\
 \forall f\in \mathrm{support}(\gamma)\setminus F\quad Y_t(f) = 1\end{array}\right\rbrace \bigcap\\
 \Big\{
\exists c_t \in \mathcal{C}_t, d(e,c_t) \geqslant \ell
\Big\}\end{array}
\right).
\end{equation}
We consider the set $M(k)$ of the configurations defined as 
$$
M(k) = \left\lbrace \begin{array}{cl}\omega\,:&\begin{array}{l} \exists F\subset \mathrm{support}(\Gamma),\, |F| = k\\
\forall f \in F\quad \omega(f) = 0\\
\forall f \in \mathrm{support}(\Gamma)\setminus F\quad \omega(f) = 1\\
\exists C\in \mathcal{C} \quad d(e,C)\geqslant \ell\end{array}\end{array}\right\rbrace.
$$
The probability in~\eqref{new.YFermC} is bounded from above by 
$$\sum_{y\in M(k)}P_\mu\left( \begin{array}{c|c}\begin{array}{c}\exists \gamma \text{ decreasing simple closed STP},\\ \mathrm{length}(\gamma) = n,\quad \mathrm{Space}(\gamma) = \Gamma,\\ \gamma \text{ starts at }(e,t+s) \text{ and ends after }t\end{array}& Y_t = y\end{array}\right)P_\mu(Y_t = y).
$$
By proposition~\ref{new.BKFermeutre}, for any $y\in M(k)$, we have
\begin{equation}\label{new.pFerm}P_\mu\left( \kern-8 pt\begin{array}{c|c} \begin{array}{c}\exists \gamma \text{ decreasing simple closed STP},\\ \mathrm{length}(\gamma) = n,\quad \mathrm{Space}(\gamma) = \Gamma,\\ \gamma \text{ starts at }(e,t+s) \text{ and ends after }t\end{array}\kern-5 pt& Y_t = y\end{array}\kern-5pt\right)\leqslant\left(\frac{s}{|\Lambda|}(1-p)\right)^{n-k}.
\end{equation}
We compute now the probability $P_\mu\Big(Y_t\in M(k)\Big)$. Notice that 
$$P_\mu(Y_t\in M(k))\leqslant P_\mu\left( \begin{array}{c} \exists F\subset \mathrm{support}(\Gamma) \quad |F| = k\\ \forall f\in F \quad Y_t(f) = 0\\
\exists c_t \in \mathcal{C}_t\quad d(e,c_t) \geqslant \ell\end{array}\right).
$$ 
The event in the last probability depends only on the configuration $Y_t$. Since the initial configuration $(X_0,Y_0)$ is distributed according to $\mu_p$, so is the couple $(X_t,Y_t)$. The configuration $Y_t$ is distributed according to the second marginal distribution $P_\mathcal{D}$. We have therefore
$$P_\mu\left(\kern-2pt \begin{array}{c} \exists F\subset \mathrm{support}(\Gamma) \quad |F| = k\\ \forall f\in F \quad Y_t(f) = 0\\
\exists c_t \in \mathcal{C}_t\quad d(e,c_t) \geqslant \ell\end{array}\right) = P_\mathcal{D}\left(\kern-2pt\begin{array}{c} \exists F\subset \mathrm{support}(\Gamma) \quad |F| = k\\ \forall f\in F \quad f \text{ closed}\\
\exists C \in \mathcal{C}\quad d(e,C) \geqslant \ell\end{array} \right).
$$
By the definition of $P_\mathcal{D}$, we have 
\begin{multline}P_\mathcal{D}\left(\begin{array}{c} \exists F\subset \mathrm{support}(\Gamma) \quad |F| = k\\ \forall f\in F \quad f \text{ closed}\\
\exists C \in \mathcal{C}\quad d(e,C) \geqslant \ell\end{array}\right) \\= P_p\left(\begin{array}{c|c}\begin{array}{c} \exists F\subset \mathrm{support}(\Gamma) \quad |F| = k\\ \forall f\in F \quad f \text{ closed}\\
\exists C \in \mathcal{C}\quad d(e,C) \geqslant \ell\end{array}&T\nlongleftrightarrow B\end{array}\right) \\
= \frac{P_p\left(\left\lbrace \kern-5pt\begin{array}{c} \exists F\subset \mathrm{support}(\Gamma) \quad |F| = k\\ \forall f\in F \quad f \text{ closed}\end{array}\kern-5pt\right\rbrace \bigcap\left\lbrace\kern-4pt\begin{array}{c}\exists C \in \mathcal{C}\\ d(e,C) \geqslant \ell\kern-4pt\end{array}\right\rbrace\bigcap\oset T\nlongleftrightarrow B\cset\right)}{P_p\Big(T\nlongleftrightarrow B\Big)} .\label{new.pYt}
\end{multline}
The existence of a cut implies the event $\oset T\nlongleftrightarrow B\cset$, thus we can rewrite the numerator as
\begin{equation*}P_p\left(\left\lbrace \begin{array}{c} \exists F\subset \mathrm{support}(\Gamma) \quad |F| = k\\ \forall f\in F \quad f \text{ closed}\end{array}\right\rbrace\bigcap\big\{\exists C\in\mathcal{C}, d(e,C) \geqslant \ell\big\}\right).
\end{equation*}
The edges of $\mathrm{support}(\Gamma)$ are at distance less than $\ell$ from the edge $e$ and the event $\oset \exists C\in\mathcal{C}, d(e,C) \geqslant \ell\cset$ depends on the edges at distance more than $\ell$ from~$e$. It follows that the two events in the previous probability are independent and we have 
\begin{multline*}
P_p\left(\left\lbrace \begin{array}{c} \exists F\subset \mathrm{support}(\Gamma) \quad |F| = k\\ \forall f\in F \quad f \text{ closed}\end{array}\right\rbrace\bigcap\big\{\exists C\in \mathcal{C}, d(e,C) \geqslant \ell\big\}\right) \\= P_p\left(\begin{array}{c} \exists F\subset \mathrm{support}(\Gamma) \quad |F| = k\\ \forall f\in F \quad f \text{ closed}\end{array}\right)P_p\Big(\exists C\in\mathcal{C}, d(e,C)\geqslant \ell\Big).
\end{multline*}
Replacing the numerator in~\eqref{new.pYt} by this product, we obtain
\begin{multline*}
P_p\left(\kern-5pt\begin{array}{c|c}\left\lbrace \begin{array}{c} \kern-5pt\exists F\subset \mathrm{support}(\Gamma) \quad |F| = k\\ \forall f\in F \quad f \text{ closed}\end{array}\kern-5pt\right\rbrace\bigcap\big\{\exists C \in \mathcal{C}, d(e,C) \geqslant \ell\big\}\kern-2pt&\kern-2ptT\nlongleftrightarrow B\end{array}\kern-5pt\right)\\
= \frac{P_p\left(\begin{array}{c} \exists F\subset \mathrm{support}(\Gamma) \quad |F| = k\\ \forall f\in F \quad f \text{ closed}\end{array}\right)P_p\Big(\exists C\in\mathcal{C}, d(e,C)\geqslant \ell\Big)}{P_p\Big(T\nlongleftrightarrow B\Big)}\\
= P_p\left(\begin{array}{c} \exists F\subset \mathrm{support}(\Gamma) \quad |F| = k\\ \forall f\in F \quad f \text{ closed}\end{array}\right)P_\mathcal{D}\Big(\exists C\in\mathcal{C}, d(e,C)\geqslant \ell\Big).
\end{multline*}
Since the edges of $F$ are distinct, we have
$$ P_p\left(\begin{array}{c} \exists F\subset \mathrm{support}(\Gamma) \quad |F| = k\\ \forall f\in F \quad f \text{ closed}\end{array}\right) \leqslant \binom{|\mathrm{support}(\Gamma)|}{k} (1-p)^k.
$$
Combined with~\eqref{new.pFerm}, we obtain that, for $\Gamma$ and $k$ fixed, the probability in~\eqref{new.YFermC}  is bounded from above by
$$ \binom{|\mathrm{support}(\Gamma)|}{k}\left(\frac{s}{|\Lambda|}\right)^{n-k}(1-p)^{n}P_\mathcal{D}\Big(\exists C \in\mathcal{C}, d(e,C) \geqslant \ell\Big).
$$
We sum on the number $k$ from $0$ to $|\mathrm{support}(\Gamma)|$, and we recall that $$|\mathrm{support}(\Gamma)|\leqslant n.$$ We have therefore
\begin{multline*}
P_\mu\left(\kern-5pt\begin{array}{ccc}\left\lbrace\kern-5pt\begin{array}{c}\exists \gamma \text{ simple closed decreasing STP},\\
\mathrm{length}(\gamma) = n,\quad \mathrm{Space}(\gamma)  = \Gamma,\\
\gamma \text{ starts at }(e,t+s)\text{ and ends after } t\end{array}\kern-5pt\right\rbrace\kern-5pt&\bigcap &\kern-5pt\left\lbrace\kern-5pt\begin{array}{c}
\exists c_t \in \mathcal{C}_t \\ d(e,c_t) \geqslant \ell
\kern-5pt\end{array}\right\rbrace\end{array}
\kern-5pt\right)\\
\leqslant \sum_{0\leqslant k \leqslant \ell/2d}\binom{n}{k}\left(\frac{s}{|\Lambda|}\right)^{n-k}(1-p)^{n}P_\mathcal{D}\Big(\exists C \in\mathcal{C}, d(e,C) \geqslant \ell\Big) \\
= \left(\left(1+\frac{s}{|\Lambda|}\right)(1-p) \right)^{n}P_\mathcal{D}\Big(\exists C \in\mathcal{C}, d(e,C) \geqslant \ell\Big).
\end{multline*}
Since the second marginal of $P_\mu$ is $P_\mathcal{D}$, we have 
$$P_\mu\left(\begin{array}{c}\exists c_t \in \mathcal{C}_t\\ d(e,c_t)\geqslant\ell\end{array}\right) = P_\mathcal{D}\Big(\exists C \in\mathcal{C}, d(e,C) \geqslant \ell\Big).
$$
This yields the inequality in the proposition.
\end{proof}
We can now estimate the probability that the set of the pivotal edges moves fast. To do so, we study the STP constructed in proposition~\ref{new.stc} and we use the previous results. 
\begin{proof}[Proof of proposition~\ref{new.vconst}] We rewrite the conditional probability appearing in the proposition as
$$\frac{P_\mu\Big(\{e\in \mathcal{P}_{t+s}\}\,\cap\, \{\forall r\in [t,t+s] \quad\mathcal{P}_r \neq \emptyset\}\,\cap\,\{\exists c_t \in \mathcal{C}_t, d(e,c_t) \geqslant \ell \}\Big)}{P_\mu\Big(\exists c_t \in \mathcal{C}_t, d(e,c_t) \geqslant \ell\Big)}.
$$
Let us estimate the probability in the numerator. By proposition~\ref{new.stc}, there exists a closed decreasing simple STP $\gamma$ inside of $\Lambda$ which connects $(e,t+s)$ to either an edge of $\mathcal{P}_t$ at time $t$ or to an edge intersecting the boundary of $\Lambda$ after time $t$. In both cases, this STP travels a distance at least $\ell$ because all the edges of $\mathcal{P}_t$ are included in the cuts and $e$ is at distance more than $\ell$ from $\Lambda^c$. Since the STP is a $*$-STP, the distance between two consecutive edges is at most $d$, and the length of the STP is at least $(\ell-1)/d$. We denote by $(e_i,t_i)_{1\leqslant i \leqslant N}$ the time-edges of $\gamma$. Let $n$ be the first index such that the STP
$$(e_1,t_1),\dots,(e_n,t_n)
$$
is longer than $\ell/2d$, i.e.,
$$ n = \inf\left\{\,k\geqslant 1 : \mathrm{length}\Big((e_1,t_1),\dots,(e_k,t_k)\Big)\geqslant \frac{\ell}{2d}\,\right\}.
$$
We set $\Gamma = \mathrm{Space}((e_i,t_i)_{1\leqslant i \leqslant n})$ and we denote $\Gamma = (f_i)_{i\in I}$. We have the following inequality:
\begin{multline*}
P_\mu\left(\begin{array}{c|c}\begin{array}{c} e\in \mathcal{P}_{t+s}\\\forall r\in [t,t+s] \quad\mathcal{P}_r \neq \emptyset\end{array} &\begin{array}{c} \exists c_t \in \mathcal{C}_t,\, d(e,c_t)\geqslant\ell
\end{array} \end{array}\right)\\ \leqslant \sum_{\Gamma}P_\mu\left(\begin{array}{c|c}\begin{array}{c}\exists \gamma \text{ simple closed decreasing STP},\\
\mathrm{length}(\gamma) = \ell/2d,\quad \mathrm{Space}(\gamma)  = \Gamma,\\
\gamma \text{ starts at }(e,t+s) \text{ and ends after }t\end{array}\kern-5pt&\begin{array}{c}
\exists c_t \in \mathcal{C}_t, \\ d(e,c_t) \geqslant \ell
\end{array}\end{array}
\right).
\end{multline*}
By proposition \ref{new.estSTC}, each term in the sum is less than 
$$\left(\left(1+\frac{s}{|\Lambda|}\right)(1-p) \right)^{\ell/2d}.
$$
We sum next on all the possible choices of $\Gamma$. By lemma~\ref{new.count}, we have
\begin{multline*}P_\mu\left(\begin{array}{c|c}\begin{array}{c} e\in \mathcal{P}_{t+s}\\\forall r\in [t,t+s] \quad\mathcal{P}_r \neq \emptyset\end{array} &\begin{array}{c} \exists c_t \in \mathcal{C}_t,\, d(e,c_t)\geqslant\ell
\end{array} \end{array}\right)\\ \leqslant \left(\alpha(d)\left(1+\frac{s}{|\Lambda|}\right)(1-p) \right)^{\ell/2d}.
\end{multline*}
There is a constant $\tilde{p}<1$ such that, for all $p\geqslant \tilde{p}$, $s\leqslant |\Lambda|$ and $\ell\geqslant 2$,
$$\left(\alpha(d)\left(1+\frac{s}{|\Lambda|}\right)(1-p) \right)^{\ell/2d}\leqslant e^{-\ell}. 
$$
We have obtained the result stated in the proposition~\ref{new.vconst}.
\end{proof}

\section{The localisation around pivotal edges}
\label{new.th1}
We start by stating a corollary of proposition~\ref{new.vconst}.
Recall at first that the Hausdorff distance between two subsets $A$ and $B$ of $\mathbb{R}^d$, denoted by $d_H(A,B)$, is
$$ d_H(A,B) = \max \,\big\{\,\sup_{a\in A}d(a,B), \sup_{b\in B}d(b,A)\,\big\}.
$$
For $A$ a subset of $\mathbb{R}^d$ and $r>0$, we define the neighbourhood
$$\mathcal{V}(A,r) = \oset x\in \mathbb{R}^d\,:\, d(x,A)<r\cset.$$
The Hausdorff distance is also equal to 
$$ \inf\,\oset r\geqslant 0 \,:\,A\subset\mathcal{V}(B,r),\quad B\subset\mathcal{V}(A,r)\cset.
$$
For $\ell\geqslant 0$, we consider two subsets $A,B$ of $\Lambda$ and we define a semi-distance between two such subsets, denoted by $d_H^\ell(A,B)$, adapted to our study, by
$$ d_H^\ell(A,B) = \inf\,\left\lbrace r\geqslant 0 \,:\begin{array}{c}A\setminus \mathcal{V}(\Lambda^c,\ell)\subset\mathcal{V}(B,r)\\ B\setminus \mathcal{V}(\Lambda^c,\ell)\subset\mathcal{V}(A,r)\end{array}\right\rbrace.
$$
Notice that $d_H^\ell$ is a semi-distance, in fact the triangle inequality is not satisfied. However, the following lemma allow us to compare $d_H^\ell$ with the Hausforff distance and provides us an alternative to the triangle inequality.
\begin{lem}\label{new.ineqtri}
For two subsets $A,B$ of $\Lambda$ and for all $\ell\geqslant 0$, we have 
$$d_H^\ell(A,B)\vee\ell\geqslant d_H(A\cup\Lambda^c,B\cup\Lambda^c).
$$
\end{lem}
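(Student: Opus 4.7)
The plan is to unfold the three definitions and do a short case analysis on how close a point of $A\cup\Lambda^c$ can be to $\Lambda^c$.

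Set $r = d_H^\ell(A,B)\vee\ell$. By the symmetry of the statement in $A$ and $B$, it suffices to show that every $x\in A\cup\Lambda^c$ satisfies $d(x,B\cup\Lambda^c)\leq r$, and the analogous bound with $A$ and $B$ swapped. I would handle the first claim by splitting $A\cup\Lambda^c$ into three regions. If $x\in\Lambda^c$, there is nothing to prove since $d(x,B\cup\Lambda^c)=0\leq r$. If $x\in A$ lies inside $\mathcal{V}(\Lambda^c,\ell)$, then $d(x,\Lambda^c)<\ell\leq r$, so again $d(x,B\cup\Lambda^c)\leq d(x,\Lambda^c)\leq r$. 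The only nontrivial case is $x\in A\setminus\mathcal{V}(\Lambda^c,\ell)$, and here I would invoke the very definition of $d_H^\ell$: any value $r'>d_H^\ell(A,B)$ satisfies $A\setminus\mathcal{V}(\Lambda^c,\ell)\subset\mathcal{V}(B,r')$, so passing to the infimum yields $d(x,B)\leq d_H^\ell(A,B)\leq r$, hence $d(x,B\cup\Lambda^c)\leq r$.

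Exchanging the roles of $A$ and $B$ gives $\sup_{y\in B\cup\Lambda^c}d(y,A\cup\Lambda^c)\leq r$. Taking the maximum of the two suprema yields exactly $d_H(A\cup\Lambda^c,B\cup\Lambda^c)\leq r$, which is the claimed inequality.

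There is no real obstacle: the only subtlety is remembering that $\mathcal{V}(B,r)$ is defined as an open neighbourhood, so the infimum in the definition of $d_H^\ell$ is not necessarily attained; this is handled by passing to the limit $r'\downarrow d_H^\ell(A,B)$, which gives the non-strict bound $d(x,B)\leq d_H^\ell(A,B)$ used above. Otherwise the argument is purely a definition-chasing case analysis.
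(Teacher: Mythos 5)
Your argument is correct and follows essentially the same route as the paper's: fix $x\in A\cup\Lambda^c$, split into the near-boundary case ($x\in\mathcal{V}(\Lambda^c,\ell)$, which covers your cases 1 and 2) and the interior case ($x\in A\setminus\mathcal{V}(\Lambda^c,\ell)$), invoke the definition of $d_H^\ell$ for the latter, and then conclude by symmetry. If anything you are slightly more careful than the paper, which writes the conclusion as an open-neighbourhood inclusion $A\cup\Lambda^c\subset\mathcal{V}(B\cup\Lambda^c,d_1\vee\ell)$ rather than the non-strict bound $\sup_x d(x,B\cup\Lambda^c)\leqslant d_1\vee\ell$; your remark about passing to $r'\downarrow d_H^\ell(A,B)$ addresses exactly that small gap.
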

\begin{proof}
Let $A,B$ be two subsets of $\Lambda$, and let us set $d_1 = d_H^\ell(A,B)$. We claim that 
$$A\cup\Lambda^c\subset \mathcal{V}\big(\,B\cup\Lambda^c, d_1\vee \ell\,\big).
$$ 
Let $x\in A\cup \Lambda^c$, we will show that 
$x$ belongs to $\mathcal{V}\big(\,B\cup\Lambda^c, d_1\vee \ell\,\big).
$ We distinguish two cases. If $x\in\mathcal{V}(\Lambda^c,\ell)$, then  we have 
$$x\in\mathcal{V}\big(\,B\cup\Lambda^c, \ell\,\big)\subset \mathcal{V}\big(\,B\cup\Lambda^c, d_1\vee \ell\,\big).
$$
In the other case, if $x\in A\setminus \mathcal{V}(\Lambda^c,\ell)$ and by the definition of $d_H^\ell$, we have 
$$ x\in \mathcal{V}(B,d_1)\subset \mathcal{V}(B\cup\Lambda^c,d_1)\subset\mathcal{V}(B\cup\Lambda^c,d_1\vee \ell).
$$
By exchanging $A$ and $B$, we have $$ B\cup\Lambda^c\subset \mathcal{V}\big(\,A\cup\Lambda^c, d_1\vee \ell\,\big).$$
By the definition of $d_H$, we obtain the desired claim, which in turn proves the lemma.
\end{proof}
\begin{prop}\label{new.dH}
We have the following result:
\begin{multline*}\exists \tilde{p}<1 \quad \exists \kappa>1 \quad \forall p\geqslant \tilde{p}\quad \forall c \geqslant 1\quad\forall \Lambda\quad|\Lambda|\geqslant(cd)^{cd^2}\quad \forall t \geqslant 0 \\  P_\mu\Big(
\exists s\leqslant |\Lambda|\quad d^{\kappa c\ln|\Lambda|}_H(\mathcal{P}_t,\mathcal{P}_{t+s})\geqslant \kappa c\ln |\Lambda|
 \Big)\leqslant \frac{10d}{|\Lambda|^c}.
\end{multline*}
\end{prop}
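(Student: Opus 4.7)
The plan is to bound the event $\{\exists s\leq |\Lambda|,\ d_H^\ell(\mathcal{P}_t,\mathcal{P}_{t+s})\geq \ell\}$ with $\ell:=\kappa c\ln|\Lambda|$. Unpacking the definition of the semi--distance $d_H^\ell$, this event entails that for some $s\in\{0,\dots,|\Lambda|\}$ either the \emph{forward} event (F) holds --- there exists an edge $e\in\mathcal{P}_{t+s}$ with $d(e,\Lambda^c)\geq\ell$ and $d(e,\mathcal{P}_t)\geq\ell$ --- or its symmetric variant (B) with the roles of $t$ and $t+s$ swapped. I will bound (F) directly via proposition~\ref{new.vconst} and reduce (B) to (F) by reversibility of the marginal chain $(Y_r)_{r\in\mathbb{N}}$: this chain is the dynamical percolation conditioned on the recurrent class $\{T\nlongleftrightarrow B\}$, and therefore inherits reversibility by Kelly's lemma (already invoked in section~\ref{new.mono}). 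Since both (F) and (B) are functionals of the pair $(Y_t,Y_{t+s})$ alone, the reversibility of the stationary $Y$-chain yields $(Y_t,Y_{t+s})\overset{d}{=}(Y_{t+s},Y_t)$, so that $P_\mu(\text{B})=P_\mu(\text{F})$.

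To bound $P_\mu(\text{F})$, I introduce two high--probability auxiliary events. First, set $E_1=\{\mathcal{P}_r\neq\emptyset\text{ for every }r\in[t,t+|\Lambda|]\}$; by proposition~\ref{new.nopivot} and a union bound over times (using stationarity), $P_\mu(E_1^c)\leq (|\Lambda|+1)\,d|\Lambda|\exp(-D)$ with $D$ of order $|\Lambda|^{1/d}$, which, thanks to the size hypothesis $|\Lambda|\geq (cd)^{cd^2}$, is negligible compared to $1/|\Lambda|^c$. Second, let $\kappa_1$ denote the constant provided by corollary~\ref{new.distcut} and let $E_2$ be the event that its conclusion holds at time $t$, namely every edge of every cut in $\mathcal{C}_t$ lies within distance $\kappa_1 c\ln|\Lambda|$ of $\mathcal{P}_t\cup\Lambda^c$; then $P_\mu(E_2^c)\leq 1/|\Lambda|^c$ by stationarity and the corollary.

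On $E_2$, if (F) occurs with witness $e$ at time $s$, then for every cut $c_t\in\mathcal{C}_t$ and every edge $f\in c_t$ the triangle inequality gives
$d(e,f)\geq d(e,\mathcal{P}_t\cup\Lambda^c)-d(f,\mathcal{P}_t\cup\Lambda^c)\geq \ell-\kappa_1 c\ln|\Lambda|$,
so $d(e,c_t)\geq \ell':=(\kappa-\kappa_1)c\ln|\Lambda|$ for every $c_t\in\mathcal{C}_t$. Since $Y_t\in\{T\nlongleftrightarrow B\}$ forces $\mathcal{C}_t\neq\emptyset$, the conditioning event of proposition~\ref{new.vconst} is realised at threshold $\ell'$, yielding for each fixed $e$ and $s$
\begin{equation*}
P_\mu\bigl(e\in\mathcal{P}_{t+s},\ E_1,\ \exists c_t\in\mathcal{C}_t,\ d(e,c_t)\geq \ell'\bigr)\leq e^{-\ell'}=|\Lambda|^{-(\kappa-\kappa_1)c}.
\end{equation*}
Summing over the at most $d|\Lambda|$ edges and the $|\Lambda|+1$ values of $s$, and choosing $\kappa:=\kappa_1+3$ so that $(\kappa-\kappa_1)c\geq c+2$ for every $c\geq 1$, one obtains $P_\mu(\text{F}\cap E_1\cap E_2)\leq 2d/|\Lambda|^c$. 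Combining, $P_\mu(\text{F})\leq 2d/|\Lambda|^c+1/|\Lambda|^c+P_\mu(E_1^c)\leq (2d+2)/|\Lambda|^c$, and the same bound applies to (B) by reversibility; the total probability is therefore at most $(4d+4)/|\Lambda|^c\leq 10d/|\Lambda|^c$ for $d\geq 2$.

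The main delicacy lies in the reversibility reduction for case (B): it is crucially the marginal $Y$-chain (and not the joint $(X,Y)$-chain, which is not reversible in general, since the acceptance mechanism for opening an edge is asymmetric in $X$ and $Y$) that one invokes, and the events (F) and (B) happily depend only on $Y$. The rest is bookkeeping: correctly pipelining corollary~\ref{new.distcut} into proposition~\ref{new.vconst} via the triangle inequality, choosing $\kappa$ large compared to $\kappa_1$, and taking $\tilde p$ at least as large as the thresholds of all the auxiliary results used.
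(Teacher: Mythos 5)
Your proof matches the paper's argument step for step: both reduce the backward containment to the forward one via reversibility of the stationary $Y$-chain, dispose of the case $\mathcal{P}_r=\emptyset$ via Proposition~\ref{new.nopivot}, pass from $\mathcal{P}_t$ to a cut $c_t$ via Corollary~\ref{new.distcut} together with the triangle inequality, and feed the resulting separation into Proposition~\ref{new.vconst}. The only difference is cosmetic bookkeeping at the end (you set $\kappa=\kappa_1+3$ directly, whereas the paper takes $\kappa=\kappa'+1$ and later substitutes $c\to c+2$); the decomposition and the key estimates are identical.
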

\begin{proof}
We fix $s\in \oset 1,\dots, |\Lambda|\cset$. By the definition of the distance $d_H^\ell$, we have, for any $\kappa>1$,
\begin{multline}P_\mu\Big(d_H^{\kappa c\ln|\Lambda|}(\mathcal{P}_t,\mathcal{P}_{t+s})\geqslant \kappa c\ln |\Lambda|\Big)\leqslant \\
P_\mu\Big(\mathcal{P}_{t+s}\setminus \mathcal{V}(\Lambda^c,\kappa c\ln|\Lambda|)\nsubseteq \mathcal{V}(\mathcal{P}_t,\kappa c\ln|\Lambda|)\Big)\\+ P_\mu\Big(\mathcal{P}_t\setminus \mathcal{V}(\Lambda^c,\kappa c\ln|\Lambda|)\nsubseteq \mathcal{V}(\mathcal{P}_{t+s},\kappa c\ln|\Lambda|)\Big).\label{new.pH}
\end{multline}
Since the two probabilities in the sum depend only on the process $Y$, which is reversible, they are in fact equal to each other. We shall estimate the first probability. We discuss first the case where there is a time $r\in\oset t,\dots,t+s\cset$ when $\mathcal{P}_r = \emptyset$. By proposition \ref{new.nopivot}, there is a $\tilde{p}<1$ such that, for $p\geqslant \tilde{p}$ and all $\Lambda$,
$$\forall r \in \mathbb{N}\qquad P_\mathcal{D}\left(\mathcal{P}_r = \emptyset\right) \leqslant d|\Lambda|\exp\left(-D\right),
$$
where $D$ is the diameter of $T$.
By summing over the time $r$, we have
\begin{equation}\label{new.probvide}P_\mathcal{D}\left(\exists r\in [t,t+s]\quad\mathcal{P}_r = \emptyset\right)\leqslant d|\Lambda|^2\exp\left(-D\right).
\end{equation}
We now consider the case where there exists always at least one pivotal edge during the time interval $[t,t+s]$. 
We can then apply proposition \ref{new.vconst} with an $\ell$ which will be determined later. There exists $\tilde{p}<1$ such that for $p\geqslant\tilde{p}$, for $t\geqslant 0$, and for any $s\leqslant |\Lambda|$ and $e$ an edge such that $d(e,\Lambda^c)\geqslant \ell$,
$$P_\mu\left( \begin{array}{c|c} \begin{array}{c}e\in\mathcal{P}_{t+s}\\ \forall r\in [t,t+s]\quad \mathcal{P}_r \neq \emptyset\end{array}&\exists c_t\in \mathcal{C}_t,d(e,c_t)\geqslant \ell\end{array}\right) \leqslant e^{-\ell}.
$$
Let us fix $t\geqslant0$, $s\leqslant |\Lambda|$ and $e$ an edge such that $d(e,\Lambda^c)\geqslant \ell$. The previous inequality implies that
$$P_\mu\left(\begin{array}{c} e\in\mathcal{P}_{t+s}\\ \forall r\in [t,t+s], \mathcal{P}_r \neq \emptyset\\ \exists c_t\in \mathcal{C}_t\quad d(e,c_t)\geqslant \ell\end{array}\right) \leqslant e^{-\ell}.
$$
In order to replace $c_t$ by $\mathcal{P}_t$ in the last probability, we use the corollary~\ref{new.distcut}. At the time $t$, the configuration $Y_t$ follows the distribution $P_\mathcal{D}$. Therefore, there exists $\tilde{p}<1$ and a $\kappa' >1 $ such that for $p\geqslant \tilde{p}$, for all $c\geqslant 1$ and all $\Lambda$ such that $|\Lambda|\geqslant 3^{6d}$, we have
$$P_\mu \left(\begin{array}{c}\exists C\in\mathcal{C}_t\quad\exists f\in C\\
 d(f,\Lambda^c\cup \mathcal{P}_t\setminus \{f\})\geqslant \kappa' c\ln|\Lambda|\\
 \forall r\in [t,t+s]\quad \mathcal{P}_r \neq \emptyset\end{array} \right)\leqslant \frac{1}{|\Lambda|^c}.
$$
From now onwards, we suppose that $p$ is larger than the three previous $\tilde{p}$. Let $c>0$ be fixed and let $\kappa'$ be associated to $c$ as above. We distinguish two cases to control the following probability:
\begin{multline*}
P_\mu\big( e\in \mathcal{P}_{t+s}, d(e,\mathcal{P}_t)\geqslant \kappa c\ln|\Lambda|,\forall r\in [t,t+s]\,\, \mathcal{P}_r \neq \emptyset\big)\leqslant \\ P_\mu\left(\begin{array}{c}e\in \mathcal{P}_{t+s},\quad d(e,\mathcal{P}_t)\geqslant \kappa c\ln|\Lambda|,\\ \forall C\in \mathcal{C}_t\quad\forall f\in C\setminus\mathcal{V}(\Lambda^c,\kappa' c\ln|\Lambda|)\\
 d(f,\mathcal{P}_t)< \kappa' c\ln|\Lambda|,\\
 \forall r\in [t,t+s]\quad\mathcal{P}_r \neq \emptyset
 \end{array}\right)\\
+P_\mu\Big( \exists C\in\mathcal{C}_t,\exists f\in C, d(f,\Lambda^c\cup \mathcal{P}_t\setminus \{f\})\geqslant \kappa' c\ln|\Lambda|\Big).
\end{multline*}
The second probability is less than $1/|\Lambda|^c$. Let us study the first probability. Since all the edges of a cut at time $t$ are either at distance less than $\kappa' c\ln|\Lambda|$ from $\Lambda^c$ or at distance less than $\kappa' c\ln|\Lambda|$ from $\mathcal{P}_t$ and the distance between $e$ and $\mathcal{P}_t\cup \Lambda^c$ is larger than $\kappa c\ln |\Lambda|$, then all the cuts at time $t$ are at distance more than $(\kappa-\kappa') c\ln|\Lambda|$ from $e$. Hence, for $\kappa> \kappa'$,
\begin{multline*}
P_\mu\left(\begin{array}{c}e\in \mathcal{P}_{t+s}\quad d(e,\Lambda^c\cup\mathcal{P}_t)\geqslant \kappa c\ln|\Lambda|\\ \forall C\in \mathcal{C}_t\quad\forall f\in C\setminus\mathcal{V}(\Lambda^c,\kappa' c\ln|\Lambda|)\\
 d(f,\mathcal{P}_t)< \kappa' c\ln|\Lambda|\\
 \forall r\in [t,t+s]\quad \mathcal{P}_r \neq \emptyset\end{array}\right)\leqslant \\
P_\mu\left(
\begin{array}{c}e\in \mathcal{P}_{t+s}\quad d(e,\Lambda^c)\geqslant \kappa c\ln|\Lambda|\\ \forall C\in \mathcal{C}_t\quad\forall f\in C\setminus\mathcal{V}(\Lambda^c,\kappa' c\ln|\Lambda|)\\
 d(f,e)> (\kappa-\kappa') c\ln|\Lambda|\\
 \forall r\in [t,t+s]\quad \mathcal{P}_r \neq \emptyset\end{array}
\right)\leqslant \\
 P_\mu\left(\begin{array}{c} e\in\mathcal{P}_{t+s}\\ \exists c_t\in \mathcal{C}_t\quad d(e,c_t)\geqslant (\kappa-\kappa')c\ln|\Lambda|\\ \forall r\in [t,t+s]\quad \mathcal{P}_r \neq \emptyset\end{array}\right) \leqslant \frac{1}{|\Lambda|^{(\kappa-\kappa')c}}.
\end{multline*}
We choose now $\kappa = \kappa'+1$, and we get
$$P_\mu\big( e\in \mathcal{P}_{t+s},d(e,\mathcal{P}_t)\geqslant \kappa c\ln|\Lambda|,\forall r\in [t,t+s]\,\, \mathcal{P}_r \neq \emptyset\big) \leqslant \frac{2}{|\Lambda|^c}.
$$
We sum over $e$ in $\Lambda$ and $s\in \oset 1,\dots,|\Lambda|\cset$ to get
$$P_\mu\left(\begin{array}{c}\exists s\leqslant |\Lambda|, \exists e\in \mathcal{P}_{t+s}\\ d(e,\Lambda^c\cup\mathcal{P}_t)\geqslant \kappa c\ln|\Lambda| \\
\forall r\in [t,t+s]\quad \mathcal{P}_r \neq \emptyset \end{array}\right) \leqslant \frac{4d}{|\Lambda|^{c-2}}.
$$
We add the probability in~\eqref{new.probvide} and we obtain

$$P_\mu\left(\begin{array}{c}
\exists s\leqslant |\Lambda|, \exists e\in \mathcal{P}_{t+s}\\ d(e,\Lambda^c\cup\mathcal{P}_t)\geqslant \kappa c\ln|\Lambda|
\end{array}  \right)\leqslant \frac{4d}{|\Lambda|^{c-2}}+d|\Lambda|^2\exp\left(-D\right).
$$ 
This is the first probability in~\eqref{new.pH} and we conclude that
$$P_\mu\left(\begin{array}{c}
\exists s\leqslant |\Lambda|\\d_H^\Lambda(\mathcal{P}_t,\mathcal{P}_{t+s})\geqslant \kappa c\ln |\Lambda|
\end{array}  \right)\leqslant \frac{8d}{|\Lambda|^{c-2}}+2d|\Lambda|^2\exp\left(-D\right).
$$ 
For all box $\Lambda$ such that $|\Lambda|\geqslant (cd)^{cd^2}$, we have 
$$\exp\left(-D\right)\leqslant \frac{1}{|\Lambda|^c}.
$$
Therefore, for all $\Lambda$ such that $|\Lambda|\geqslant (cd)^{cd^2}$, we have
$$\frac{8d}{|\Lambda|^{c-2}}+2d|\Lambda|^2\exp\left(-D\right) \leqslant \frac{10d}{|\Lambda|^{c-2}}.
$$
In order to obtain $1/|\Lambda|^c$, we replace $c$ by $c+2$, since $(c+2)/c\leqslant 3$ for $c\geqslant 1$, we have, for $|\Lambda|\geqslant \max\left\lbrace (cd)^{cd^2},3^{6d}\right\rbrace$,
\begin{multline*}P_\mu\left(\begin{array}{c}
\exists s\leqslant |\Lambda|\\d_H^\Lambda(\mathcal{P}_t,\mathcal{P}_{t+s})\geqslant 3\kappa c\ln |\Lambda|
\end{array}  \right)\\ \leqslant P_\mu\left(\begin{array}{c}
\exists s\leqslant |\Lambda|\\d_H^\Lambda(\mathcal{P}_t,\mathcal{P}_{t+s})\geqslant \kappa (c+2)\ln |\Lambda|
\end{array}  \right)\leqslant \frac{10d}{|\Lambda|^c}.
\end{multline*}
This yields the desired inequality.
\end{proof}
We now complete the proof of the theorem~\ref{new.main2}.
\begin{proof}[Proof of theorem~\ref{new.main2}]
Let us fix an edge $e$ in $\Lambda$ and a time $t$. We distinguish the cases where $e\in\mathcal{I}_t\setminus \mathcal{P}_t$ and $e\in \mathcal{P}_t$. If $e\in \mathcal{P}_t$, then we use the proposition~\ref{new.distpivot}. We consider now the case where $e\in\mathcal{I}_t\setminus \mathcal{P}_t$. We consider the last time $\tau$ when $e$ was pivotal,
$$\tau = \max\,\oset 0\leqslant s < t\,:\,e \in \mathcal{P}_s, e\notin \mathcal{P}_{s+1}\cset.
$$
The edge $e$ has not been modified between $\tau$ and $t$. Let $c\geqslant1$. We have
$$
P_\mu(t-\tau \geqslant c|\Lambda|\ln|\Lambda|)\leqslant P_\mu\left(\begin{array}{c}\forall r\in [t-c|\Lambda|\ln|\Lambda|,t]\\
E_r \neq e\end{array}\right)\leqslant \frac{1}{|\Lambda|^c}.
$$
We consider now the case where $t-\tau< c|\Lambda|\ln|\Lambda|$. We split the interval $[\tau,t]$ into subintervals of length $|\Lambda|$ and we set
$$t_i = \tau+ i|\Lambda|,\quad 0\leqslant i < \frac{t-\tau}{|\Lambda|} \quad\text{and}\quad t_{\lfloor (t-\tau)/|\Lambda|\rfloor+1} = t.$$
\begin{figure}[ht]
\centering{
\resizebox{120mm}{!}{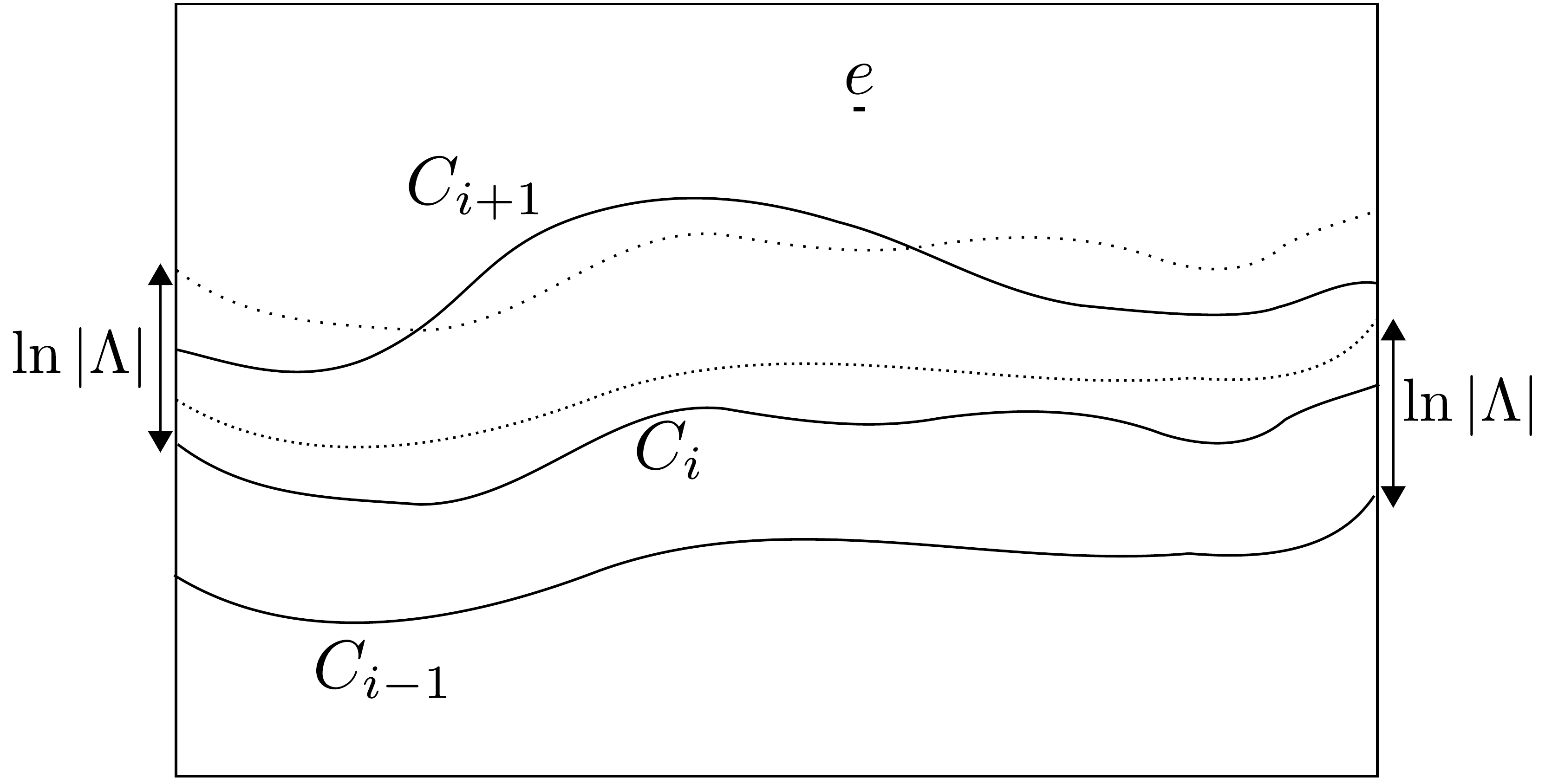}
\caption{The cut $C_i$ at time $t_i$ is at distance less than $\ln |\Lambda|$ from $C_{i-1}$ and the cut $C_{i+1}$ is at distance more than $\ln |\Lambda|$ from $C_i$.}
\label{new.fig:deplacement}
}
\end{figure}
According to proposition~\ref{new.dH}, there exists $\tilde{p}<1$ and $\kappa'>1$, such that
\begin{multline*}\forall p\geqslant \tilde{p}\quad \forall c\geqslant 1 \quad\forall |\Lambda|\geqslant (cd)^{cd^2} \quad\forall j\geqslant 0 \\ P_\mu\Big( d_H^{\kappa' c\ln|\Lambda|}(\mathcal{P}_{t_j},\mathcal{P}_{t_{j+1}})\geqslant \kappa'c\ln|\Lambda|\Big)\leqslant \frac{10d}{|\Lambda|^c}.
\end{multline*}
Let $c\geqslant 1$. We suppose that $$d(e,\mathcal{P}_t\cup\Lambda^c)\geqslant 2\kappa'c^2(\ln|\Lambda|)^2>(c\ln|\Lambda|+1)\kappa'c\ln|\Lambda|.$$ 
We have by lemma \ref{new.ineqtri}, as illustrated in the figure~\ref{new.fig:deplacement},
\begin{multline*}\sum_{0\leqslant i<(t-\tau)/|\Lambda|} d_H^{\kappa'c\ln|\Lambda|}(\mathcal{P}_{t_i},\mathcal{P}_{t_{i+1}})\vee\kappa'c\ln|\Lambda|\\ \geqslant \sum_{0\leqslant i<(t-\tau)/|\Lambda|} d_H(\mathcal{P}_{t_i}\cup\Lambda^c,\mathcal{P}_{t_{i+1}}\cup\Lambda^c)\\ \geqslant d_H(\mathcal{P}_\tau\cup\Lambda^c,\mathcal{P}_t\cup\Lambda^c)\\\geqslant d(e,\mathcal{P}_t\cup\Lambda^c) \geqslant 2\kappa'c^2(\ln|\Lambda|)^2.
\end{multline*}
Necessarily, there is an index $0\leqslant j <c\ln|\Lambda|$ such that 
$$d_H^{\kappa'c\ln|\Lambda|}(\mathcal{P}_{t_j},\mathcal{P}_{t_{j+1}})\vee \kappa'c\ln|\Lambda|\geqslant 2\kappa'c\ln|\Lambda|.
$$
Therefore, we have
$$d_H^{\kappa'c\ln|\Lambda|}(\mathcal{P}_{t_j},\mathcal{P}_{t_{j+1}})\geqslant \kappa'c\ln|\Lambda|.
$$
By summing over $j$ from $0$ to $\lfloor c\ln|\Lambda|\rfloor$, we have 
$$P_\mu\Big(e\in \mathcal{I}_t,d(e,\mathcal{P}_t)\geqslant 2\kappa'c^2(\ln|\Lambda|)^2, t-\tau < c|\Lambda|\ln|\Lambda|\Big) \leqslant \frac{10d(c\ln|\Lambda|+1)}{|\Lambda|^c}.
$$
We set $\kappa = 2\kappa'$ and we obtain
\begin{multline*}
P_\mu\Big(e\in\mathcal{P}_t\cup\mathcal{I}_t,d(e,\mathcal{P}_t\setminus\{e\})\geqslant \kappa c^2(\ln|\Lambda|)^2\Big)\\ \leqslant P_\mu\Big(e\in \mathcal{P}_t,d(e,\mathcal{P}_t\setminus\{e\})\geqslant \kappa (c\ln|\Lambda|)^2\Big)\\
+P_\mu\Big( t-\tau \geqslant c |\Lambda|\ln|\Lambda|\Big)+P_\mu\Big(e\in\mathcal{I}_t, d(e,\mathcal{P}_t)\geqslant \kappa(c\ln|\Lambda|)^2, t-\tau \leqslant c\ln|\Lambda|\Big)\\
\leqslant \frac{2}{|\Lambda|^c}+\frac{10d(c\ln|\Lambda|+1)}{|\Lambda|^c}.
\end{multline*}
We sum over the edge $e$. For $\Lambda$ such that $|\Lambda|\geqslant (cd)^{cd^2}$, we have
\begin{multline*}P_\mu \Big(\exists e\in \mathcal{P}_t\cup\mathcal{I}_t,d\left(e,\mathcal{P}_t\setminus\{e\}\right)\geqslant \kappa(c\ln |\Lambda|)^2 \Big)\leqslant\\ \frac{4d+20d^2(c\ln|\Lambda|+1)}{|\Lambda|^{c-1}}\leqslant \kern-1pt\frac{1}{|\Lambda|^{c-2}}.
\end{multline*}
We apply this result with $c+2$ of $c$, since for $c\geqslant 1$, $(c+2)^2/c^2\leqslant 9$, we have 
$$9\kappa c^2\geqslant \kappa(c+2)^2.
$$
Therefore, we have
$$P_\mu \Big(\exists e\in \mathcal{P}_t\cup\mathcal{I}_t,d\left(e,\mathcal{P}_t\setminus\{e\}\right)\geqslant 9\kappa(c\ln |\Lambda|)^2 \Big)\leqslant \frac{1}{|\Lambda|^c}.
$$ This proves the statement of theorem \ref{new.main2}.
\end{proof}
\section{Speed estimations conditionned by the past}\label{new.speedI}

We derive further estimates on the speed of the pivotal edges which will be used in the proof of the theorem~\ref{new.main1}.
First, we give a corollary of the proposition~\ref{new.vconst}, which provides a control on the cuts, rather than the pivotal edges. 
\begin{cor} \label{new.vcconst}
We have the following inequality:
\begin{multline*}
\exists \tilde{p}<1\quad \forall p \geqslant \tilde{p}\quad \forall \Lambda \\ \forall \ell\geqslant 1\quad\forall e\in \Lambda\quad d(e,\Lambda^c)\geqslant \ell\quad\forall t>0\quad  \forall s\in \{0,\dots,|\Lambda|\}\\ P_\mu\Big(\exists C\in \mathcal{C}_{t+s},e\in C\,\big|\,\exists c_t\in \mathcal{C}_t,\, d(e,c_t)\geqslant\ell
\Big)\leqslant \exp(-\ell).
\end{multline*}
\end{cor}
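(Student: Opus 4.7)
The strategy is to reduce the corollary to the STP argument of proposition~\ref{new.vconst}, observing that the pivotality of $e$ at time $t+s$ was only used in that proof to initiate a decreasing closed simple STP in $Y$ starting at $(e,t+s)$ and reaching time $t$. The plan is to produce such an STP directly under the weaker hypothesis that $e$ belongs to some cut at time $t+s$, and then to apply verbatim the endgame of proposition~\ref{new.vconst}.

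The first step is a purely geometric construction at time $t+s$. If $e\in C$ for some $C\in\mathcal{C}_{t+s}$, then $e$ is closed in $Y_{t+s}$ and lies on the external boundary $S^+_{t+s}$ of $O'(T)$ or $S^-_{t+s}$ of $O'(B)$. By lemma~\ref{new.connect} these boundaries are $*$-connected, so there exists a closed $*$-path at time $t+s$ emanating from $e$, which we stop at the first edge belonging to $\mathcal{P}_{t+s}$ or intersecting $\partial\Lambda$. In the second case an STP is already reached; in the first case, letting $f\in\mathcal{P}_{t+s}$ be the stopping edge, we concatenate with the decreasing simple STP produced by proposition~\ref{new.stc} from $(f,t+s)$ to either a pivotal edge at time $t$ (hence contained in $c_t$) or an edge of $\partial\Lambda$ at some time in $[t,t+s]$. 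Reapplying the simplification routine from the end of the proof of proposition~\ref{new.stc} to the concatenated path yields a decreasing closed simple STP $\gamma$ in $Y$ during $[t,t+s]$ starting at $(e,t+s)$ and terminating either on a cut at time $t$ or on $\partial\Lambda$.

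Since $d(e,c_t)\geqslant \ell$ and $d(e,\Lambda^c)\geqslant \ell$ by hypothesis, the space projection of $\gamma$ travels a Euclidean distance at least $\ell$, and hence has length at least $\ell/d$. Truncating to the prefix of length $\ell/(2d)$, applying proposition~\ref{new.estSTC} to each visitable sequence $\Gamma$ of that length starting at $e$, and summing over the at most $\alpha(d)^{\ell/(2d)}$ admissible $\Gamma$'s (lemma~\ref{new.count}), we bound the conditional probability by
$$\Bigl(\alpha(d)\bigl(1+s/|\Lambda|\bigr)(1-p)\Bigr)^{\ell/(2d)},$$
which is at most $e^{-\ell}$ for $p$ close enough to $1$ and $s\leqslant |\Lambda|$. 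The main technical obstacle is to verify that the concatenation of the $*$-path inside $S^{\pm}_{t+s}$ with the STP from proposition~\ref{new.stc} remains closed and simple in $Y$ after simplification, so that proposition~\ref{new.estSTC} may legitimately be applied. Once this is established, the remainder of the argument is a direct transcription of the final computation in the proof of proposition~\ref{new.vconst}.
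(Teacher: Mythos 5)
Your overall strategy (reduce to a decreasing closed simple STP emanating from $(e,t+s)$ that travels distance $\ell$, then apply proposition~\ref{new.estSTC} and the same endgame as proposition~\ref{new.vconst}) is exactly the right one, and your opening geometric step (connect $e$ through $S^{\pm}_{t+s}$ to the first pivotal edge or to $\partial\Lambda$) matches what the paper does. However, there is a genuine gap: you invoke proposition~\ref{new.stc} from $(f,t+s)$ down to time $t$, but proposition~\ref{new.stc} requires the hypothesis that $\mathcal{P}_r\neq\emptyset$ for \emph{every} $r$ in the time interval, and nothing in the event you are conditioning on guarantees this. The corollary's conditioning is only that a cut exists at time $t$ which is far from $e$; it says nothing about the pivotal set at intermediate times, which can perfectly well vanish and reappear during $[t,t+s]$. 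This is exactly why the paper's proof says at the outset ``We cannot use directly the STP constructed in proposition~\ref{new.stc} because between the times $t$ and $t+s$, the set of the pivotal edges can be empty.''

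The missing idea is the following case analysis. Let $\tau$ be the last time before $t+s$ at which $\mathcal{P}$ is empty. If $\tau<t$, the hypothesis of proposition~\ref{new.stc} holds on $[t,t+s]$ and your argument goes through. If $\tau=t+s$, the closed $*$-path from $e$ inside $S^{\pm}_{t+s}$ cannot meet a pivotal edge and must reach $\partial\Lambda$, again fine. But if $t<\tau<t+s$, you may only apply the construction of proposition~\ref{new.stc} on $[\tau+1,t+s]$, and the resulting STP may end at an edge of $\mathcal{P}_{\tau+1}$ at time $\tau+1$ rather than at a cut at time $t$ or at $\partial\Lambda$; one must then observe that the cut containing this edge at time $\tau+1$ consists of edges already closed at time $\tau$ (since the update at $\tau+1$ is an opening, disjoint from that cut), and, because $\mathcal{P}_\tau=\emptyset$, continue with a closed $*$-path at time $\tau$ to $\partial\Lambda$. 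Without this extra step your STP may terminate in the interior of $\Lambda$ at time $\tau+1>t$ without having traveled distance $\ell$, and the length lower bound you need for proposition~\ref{new.estSTC} is not established.
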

\begin{proof}
We adapt the construction of the STP done in the proposition~\ref{new.stc}. We cannot use directly the STP constructed in proposition~\ref{new.stc} because between the times $t$ and $t+s$, the set of the pivotal edges can be empty. Therefore, we consider $\tau$ the last time before $t+s$ when $\mathcal{P}$ is empty, i.e.,
$$\tau = \sup \oset r \leqslant t+s \,:\, \mathcal{P}_{r} = \emptyset \cset.
$$
If $\tau\leqslant t$, the conditions of proposition~\ref{new.stc} are satisfied and there exists a closed decreasing simple STP starting from $(e,t+s)$ and ending after $t$ which travels a distance at least $\ell$. If $\tau = t+s$, since the edge $e$ is in a cut, there exists a closed $*$-path in $Y_{t+s}$ which connects $e$ to an edge intersecting the boundary of $\Lambda$. This path travels a distance at least $\ell$. If $t<\tau<t+s$, then we have $\mathcal{P}_r\neq \emptyset$ for $\tau< r \leqslant t+s$. According to proposition~\ref{new.stc}, there exists a STP from $(e,t+s)$ to an edge of $\mathcal{P}_{\tau+1}$ at time $\tau+1$ or an edge intersecting the boundary of $\Lambda$ after time $\tau+1$. If the STP ends at an edge intersecting the boundary, then it travels a distance at least $\ell$. If it ends at an edge of $\mathcal{P}_{\tau+1}$ at time $\tau+1$, then, at time $\tau+1$, there must be an edge which becomes open and creates the pivotal edges of $\mathcal{P}_{\tau+1}$ which are on a cut $C$ at time $\tau+1$. Notice that the cut $C$ existed already at time $\tau$ because all the edges of $C$ are closed. Therefore, there exists a decreasing closed STP which connects $(e,t+s)$ to an edge intersecting the boundary of $\Lambda$ at time $\tau$. We reapply the algorithm of modification described in the proof of proposition~\ref{new.stc} to obtain a simple STP. In all the cases above, we obtain a decreasing closed simple STP starting at $(e,t+s)$ which travels a distance at least $\ell$. We apply the same arguments as in the proof of proposition~\ref{new.vconst} in order to obtain the desired estimate.
\end{proof}

We wish to control the movement of the set of the cuts over a time interval. To achieve this goal, we will derive estimates for the appearance of a pivotal edge conditionally on the presence of a cut far away during a whole interval. In proposition \ref{new.vconst}, the conditioning gave information on one instant, not a whole interval. In the next lemma, we deal with a time interval of length $|\Lambda|$.
\begin{lem}\label{new.recinit}There exist $\tilde{p}<1$ and $\kappa>0$ such that for $p\geqslant \tilde{p}$, any $c\geqslant 1$, any integer $m\geqslant 1$, any $\Lambda$ such that $|\Lambda|\geqslant 2d$, any edge $e$ at distance more than $\kappa c\ln|\Lambda|$ from $ \Lambda^c$ and for $0<s\leqslant |\Lambda|\leqslant t$, we have
\begin{multline*}P_\mu\left(\kern -10 pt\begin{array}{c|c}\begin{array}{c}\exists C\in \mathcal{C}_{t+s}\\d(e,C)\leqslant (m-1) \kappa c\ln|\Lambda|\end{array}\kern-5pt&\kern -8 pt\begin{array}{c}
\forall r\in ]t-|\Lambda|,t]\\
\exists C_r\in \mathcal{C}_r\quad d(e,C_r)\geqslant m\kappa c\ln|\Lambda|\\
\exists C'\in \mathcal{C}_{t-|\Lambda|}\quad d(e,C')\geqslant (m+1)\kappa c\ln|\Lambda| \end{array}\end{array}\kern -10 pt\right)\\ \leqslant \frac{1}{|\Lambda|^c}.
\end{multline*}
\end{lem}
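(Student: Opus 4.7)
The plan is to adapt the STP-based proof of proposition~\ref{new.estSTC} to a multi-time conditioning event. Let $\mathcal{F}$ denote the set of edges $f\subset\Lambda$ with $d(e,f)\leqslant (m-1)\kappa c\ln|\Lambda|$, so $|\mathcal{F}|\leqslant |\Lambda|$. The numerator event in the lemma is contained in $\bigcup_{f\in\mathcal{F}} A_f$ where $A_f=\{f\in \text{some cut of }Y_{t+s}\}$, and by the triangle inequality the conditioning event $B$ implies that for every $f\in\mathcal{F}$ there is a cut $C_t$ with $d(f,C_t)\geqslant \kappa c\ln|\Lambda|$. Thus the situation is parallel to that of corollary~\ref{new.vcconst}, applied at time $t$ to each such $f$, but with extra past information.

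For each fixed $f$, on the event $A_f\cap B$, I would apply the STP construction from the proof of corollary~\ref{new.vcconst} (which already handles the possibly empty $\mathcal{P}_r$ during the interval): there exists a decreasing simple STP $\gamma$ in $Y$, closed and starting at $(f,t+s)$, that during $[t,t+s]$ travels a Euclidean distance at least $\kappa c\ln|\Lambda|$. I would keep only its initial portion of length exactly $\kappa c\ln|\Lambda|/2d$, whose support is then contained in the ball of radius $(m-\tfrac12)\kappa c\ln|\Lambda|$ around~$e$, strictly closer to $e$ than $m\kappa c\ln|\Lambda|$.

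Third, I would apply the BK-based estimate of proposition~\ref{new.BKFermeutre} and follow the argument of proposition~\ref{new.estSTC} to bound
$$P_\mu\bigl(\gamma\text{ closed STP with prescribed Space during }[t,t+s]\,\cap\, B\bigr)\leqslant \bigl((1+s/|\Lambda|)(1-p)\bigr)^{\kappa c\ln|\Lambda|/2d}\,P_\mu(B).$$
The key structural input is that the STP event depends only on $Y_t$ restricted to the near-edge set (the support of $\Gamma$) together with the updates $(E_r,B_r)_{t<r\leqslant t+s}$ acting on those near edges, while the event $B$ depends only on $Y_{t-|\Lambda|}$ restricted to the far-edge set (at distance $\geqslant m\kappa c\ln|\Lambda|$ from $e$) together with the updates acting on those far edges during $]t-|\Lambda|,t]$. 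These two families of random variables are disjoint in the graphical construction, and under $P_p$ they are independent; the cuts in both the STP side and the $B$ side automatically imply $\{T\nlongleftrightarrow B\}$, so dividing by $P_p(T\nlongleftrightarrow B)$ cancels cleanly via the Markov property of $(Y_r)$, exactly as in the single-time case. Summing over the $\alpha(d)^{\kappa c\ln|\Lambda|/2d}$ visitable sequences (lemma~\ref{new.count}), over $f\in\mathcal{F}$, and choosing $\tilde p<1$ with $2\alpha(d)(1-\tilde p)<1$ and $\kappa$ large enough (depending only on $d$), the prefactor is at most $|\Lambda|^{-(c+1)}$, and the union bound over $\mathcal{F}$ absorbs the extra $|\Lambda|$, yielding the announced $|\Lambda|^{-c}$.

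The main obstacle is precisely the third step: the factorization in proposition~\ref{new.estSTC} uses the single-time event ``cut far at $t$'', and extending it to the multi-time event $B$ requires combining the $P_p$-independence on disjoint edge sets with the Markov property, while keeping track of how the condition at $t-|\Lambda|$ (the cut at distance $\geqslant (m+1)\kappa c\ln|\Lambda|$) interacts with the STP support. This buffer of one extra $\kappa c\ln|\Lambda|$ is what enforces that the STP support stays strictly inside the region where independence holds for the entire interval $[t-|\Lambda|,t+s]$, and not only at the single instant $t$; verifying this carefully, and ensuring no implicit cost in $P_\mu(B)$ appears from the time updates during $]t-|\Lambda|,t]$, is the delicate bookkeeping at the heart of the proof.
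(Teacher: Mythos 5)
Your outline captures the easy half of the argument — the STP from Corollary~\ref{new.vcconst} starting at time $t+s$ and ending after $t$, the restriction to a prefix of length $\kappa c\ln|\Lambda|/2d$, and the BK-based bound of Proposition~\ref{new.BKFermeutre} applied after conditioning on $Y_t$ — but the part you flag as ``delicate bookkeeping'' is precisely where the proof lives, and the structure you propose for it does not hold as stated.

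The claimed factorization is wrong for the $Y$ process. You assert that the conditioning event $B=D_1\cap\overline D_1$ ``depends only on $Y_{t-|\Lambda|}$ restricted to the far-edge set together with the updates acting on those far edges during $]t-|\Lambda|,t]$.'' That would be true if $Y$ were a product dynamics, but it is not: whether an update $(E_r,B_r)$ with $B_r=1$ actually opens a far edge in $Y$ depends on whether opening it would reconnect $T$ and $B$, which is a global function of $Y_{r-1}$, including its near edges. So the far edges of $Y_r$ for $r\in]t-|\Lambda|,t]$ are \emph{not} determined by far data alone, and the disjointness-of-randomness argument you want to invoke is not available at the $Y$ level. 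The paper needs a genuinely different mechanism to decouple the conditioning.

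What the paper actually does, and which is absent from your sketch, is the following chain: (i) it partitions on the set $B\subset A$ of near edges of $\operatorname{support}(\Gamma)$ that are refreshed during $]t-|\Lambda|,t]$, via the events $\mathrm{reset}(B,A)$; (ii) it transfers the resulting probability from $P_\mu$ to $P_p$, so that the $X$-dynamics can be used and the product structure becomes available; (iii) it conditions on the full update-edge sequence $(E_1,\dots,E_t)$ and applies the BK inequality to the \emph{coin flips} $(B_s)$ — not to the percolation edges — to separate the last flip $B_{\tau(f)}$ of each reset edge $f\in B$ from the coins determining $X_{t-|\Lambda|}$ on $A\setminus B$, from $\overline D_1$, and from $D_1$; (iv) it then uses the $(m+1)$ versus $(m-\tfrac12)$ radial gap as a \emph{single-time} spatial independence under $P_p$ at time $t-|\Lambda|$, not as a multi-time shield; and (v) it controls the denominator $P_\mu(D_1\mid\overline D_1)\geqslant\tfrac12$ via a further appeal to Corollary~\ref{new.vcconst}, which your outline never mentions. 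Without the $\mathrm{reset}$ decomposition, the BK applied to the $(B_s)$, the $P_\mu\to P_p$ transfer, and the denominator lower bound, the argument does not close. Your proposal names the right obstacle but proposes the wrong independence to overcome it and omits all the machinery that actually does.
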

\begin{proof}
Let $\kappa$ be a positive constant which will be chosen at the end of the proof.
We reuse the construction of the STP in corollary~\ref{new.vcconst}: there exists a decreasing closed simple STP which connects $(e,t+s)$ to a pivotal edge at time $t$ or to an edge intersecting the boundary of $\Lambda$ at a time after $t$. Since the edge $e$ is at distance at least $\kappa c \ln|\Lambda|$ from $\mathcal{P}_t\cup \Lambda^c$, in both cases, there exists a decreasing closed simple STP $\gamma$ of length $(\kappa c\ln|\Lambda|)/2d$ starting from the time-edge $(e,t+s)$ and ending after $t$ which is strictly included in the box $\Lambda$. Let $\Gamma$ be the space projection of $\gamma$, i.e., $$\Gamma = \mathrm{Space}(\gamma) = (e_1,\dots,e_m).$$
We introduce the following events: 
$$D_1 = \oset \forall r\in ]t-|\Lambda|,t],\exists C_r\in\mathcal{C}_r, d(e,C_r)\geqslant m\kappa c\ln|\Lambda|\cset,
$$
$$\overline{D}_1 = \oset \exists C\in \mathcal{C}_{t-|\Lambda|}, d(e,C)\geqslant (m+1)\kappa c\ln|\Lambda|\cset,
$$
and
$$\mathcal{E}(t,s,\Gamma) =\left\lbrace\begin{array}{c}\exists \gamma \text{ simple closed decreasing STP}\\
\mathrm{length}(\gamma) = (\kappa c\ln|\Lambda|)/2d, \mathrm{Space}(\gamma)  = \Gamma\\
\gamma \text{ starts at an edge }(e',t+s)\\
d(e',e)\leqslant (m-1)\kappa c\ln|\Lambda|\\\text{ and ends after }t\end{array}\right\rbrace.
$$
As in the proof of proposition \ref{new.vconst}, the probability appearing in the proposition is less than
\begin{equation}\sum_\Gamma P_\mu\left(\kern-5pt\begin{array}{c|c}\mathcal{E}(t,s,\Gamma)&\kern-5pt\begin{array}{c}
D_1, \overline{D}_1 \end{array}\end{array}\kern-10pt\right),\label{new.n=1}
\end{equation}
where the sum is over the possible choices for $\Gamma$.
We fix a path $\Gamma$ and we condition each probability in the sum by the configuration at time $t$. Let $A$ be a subset of $\mathrm{support}(\Gamma)$, we denote by $M(A)$ the following set of configurations:
$$M(A) = \left\lbrace \begin{array}{cl}\omega\,:&\begin{array}{l} 
\forall f \in A\quad \omega(f) = 0\\
\forall f \in \mathrm{support}(\Gamma)\setminus A\quad \omega(f) = 1\\
\end{array}\end{array}\right\rbrace.
$$
Let $y$ be a configuration in $M(A)$ and let us start by estimating the probability
$$P_\mu\left(\kern-5pt\begin{array}{c|c}\mathcal{E}(t,s,\Gamma)&
Y_t = y, D_1, \overline{D}_1\end{array}\kern-5pt\right).
$$
By the Markov property, this probability is equal to 
$$P_\mu\left(\begin{array}{c|c}\mathcal{E}(t,s,\Gamma)&
Y_t = y\end{array}\right),
$$
and by proposition~\ref{new.BKFermeutre}, it is less than
\begin{equation}\left(\frac{s}{|\Lambda|}(1-p)\right)^{(\kappa c\ln|\Lambda|)/2d -|A|}.\label{new.aftert}
\end{equation}
Each term of the sum in~\eqref{new.n=1} can be written as 
\begin{multline*}\sum_{0\leqslant k \leqslant |\mathrm{support}(\Gamma)|}\smash{\sum_{\mbox{\scriptsize$\begin{array}{c}A\subset \mathrm{support}(\Gamma)\\|A|  = k\end{array}$}}}\sum_{y\in M(A)}P_\mu\left(\begin{array}{c|c}\mathcal{E}(t,s,\Gamma)&
Y_t = y, D_1, \overline{D}_1\end{array}\right)\times\\P_\mu\left(\kern-5pt\begin{array}{c|c}Y_t = y&D_1,\overline{D}_1\end{array}\kern-5pt\right).
\end{multline*}
Using~\eqref{new.aftert}, we see that each term in~\eqref{new.n=1} is less than
\begin{equation}\sum_{0\leqslant k \leqslant |\mathrm{support}(\Gamma)|}\left(\frac{s}{|\Lambda|}(1-p)\right)^{(\kappa c\ln|\Lambda|)/2d -k}\kern-20pt\sum_{\mbox{\scriptsize$\begin{array}{c}A\subset \mathrm{support}(\Gamma)\\|A|  = k\end{array}$}}\kern-20pt P_\mu\left(\kern-5pt\begin{array}{c|c}Y_t \in M(A)&D_1,\overline{D}_1\end{array}\kern-5pt\right).\vspace{-5pt}\label{new.pED}
\end{equation}
In the rest of the proof, we will calculate an upper bound of 
\begin{equation}\sum_{\mbox{\scriptsize$\begin{array}{c}A\subset \mathrm{support}(\Gamma)\\|A|  = k\end{array}$}}\kern-20ptP_\mu\left(\kern-5pt\begin{array}{c|c}Y_t \in M(A)&D_1,\overline{D}_1\end{array}\kern-5pt\right).\vspace{-5pt}\label{new.pMA}
\end{equation}
Notice that, for an edge $f\in \Gamma$, if there is a time $r\in [t-|\Lambda|,t]$ such that $E_r = f$, then, under the probability $P_p$, conditioned on $D_1,\overline{D_1}$, the state of $f$ at time $t$ is independent of the other edges of $\Gamma$ and it follows a Bernoulli variable of parameter $p$. On the contrary, if $E_r\neq f$ for all $r\in[t-|\Lambda|,t]$, then the state of $f$ at time $t$ is the same as at time $t-|\Lambda|$. For $A$ a subset of $\mathrm{support}(\Gamma)$ and $B$ a subset of $A$, we define the event $\mathrm{reset}(B,A)$ as
$$\mathrm{reset}(B,A) = \left\lbrace \begin{array}{c}\forall e \in B,\, \exists r\in [t-|\Lambda|,t], E_r = e\\
\forall r\in [t-|\Lambda|,t],E_r\notin A\setminus B\end{array}\right\rbrace.
$$
For each subset $A$, we partition the probability in~\eqref{new.pMA} according to the subset $B$ of $A$ for which the event $\mathrm{reset}(B,A)$ occurs, and we get
\begin{multline}\label{new.pMk}
\kern -10 pt\sum_{\mbox{\scriptsize$\begin{array}{c}A\subset \mathrm{support}(\Gamma)\\ |A| = k, B\subset A\end{array}$}}\kern -25pt P_\mu \big(Y_t\in M(A), \mathrm{reset}(B,A)\,\big|\, D_1,\overline{D}_1\big)\\ 
= \kern -20 pt\sum_{\mbox{\scriptsize$\begin{array}{c}A\subset \mathrm{support}(\Gamma)\\ |A| = k, B\subset A\end{array}$}}\kern -20pt P_\mu\left(\kern-5pt\begin{array}{c|c}
\begin{array}{c} \forall f \in B,\, Y_t(f) = 0\\
\forall f\in A\setminus B, Y_{t-|\Lambda|}(f) = 0\\
\mathrm{reset}(B,A)\end{array}\kern-5pt&\kern-5pt\begin{array}{c}D_1,\overline{D}_1\end{array}\end{array}\kern-5pt\right).
\end{multline}
We write 
$$P_\mu(\cdot) = \sum_{x_0,y_0}P_{x_0,y_0}\big(\cdot\big)\mu\big((x_0,y_0)\big)
,$$
where $P_{x_0,y_0}$ is the law of the process $(X_t,Y_t)_{t\in\mathbb{N}}$ starting from the initial configuration $(x_0,y_0)$. For each term we rewrite the conditioned probability as follows:
\begin{multline}
P_{x_0,y_0}\left(\kern-5pt\begin{array}{c|c}
\begin{array}{c} \forall f \in B,\, Y_t(f) = 0\\
\forall f\in A\setminus B, Y_{t-|\Lambda|}(f) = 0\\
\mathrm{reset}(B,A)\end{array}\kern-5pt&\kern-5pt\begin{array}{c}D_1,\overline{D}_1\end{array}\end{array}\kern-5pt\right) \\
= \frac{P_{x_0,y_0}\left(\left\{ \begin{array}{c} \forall f \in B,\, Y_t(f) = 0\\
\forall f\in A\setminus B, Y_{t-|\Lambda|}(f) = 0\\
\mathrm{reset}(B,A)\end{array}\right\}\bigcap D_1\bigcap\overline{D}_1\right)}{P_{x_0,y_0}(D_1,\overline{D}_1)}.
\label{new.1-pB}
\end{multline}
Starting from an initial configuration $(x_0,y_0)$, the process $(Y_t)_{t\in\mathbb{N}}$ is obtained by conditioning to stay in the configurations with disconnexion.
We can replace $P_\mu$ by $P_p$ in the previous fraction and the numerator can be written as
\begin{multline*}P_p\left(\left\{ \begin{array}{c} \forall f \in B,\, X_t(f) = 0\\
\forall f\in A\setminus B, X_{t-|\Lambda|}(f) = 0\\
\mathrm{reset}(B,A)\end{array}\right\}\bigcap D_1\bigcap\overline{D}_1\right)
\\
= P_p\left(\left\{ \begin{array}{c} \forall f \in B,\, B_{\tau(f)} = 0\\
\forall f\in A\setminus B, X_{t-|\Lambda|}(f) = 0\\
\mathrm{reset}(B,A)\end{array}\right\}\bigcap D_1\bigcap\overline{D}_1\right),
\end{multline*}
where the time $\tau(f)$ is the last time before $t$ when the edge $f$ is chosen, i.e.,
$$\tau(f) = \sup\oset s\leqslant t \,:\, E_s = f\cset.
$$
Let us fix a sequence of edges $\mathbf{e} =(e_1,\dots,e_t)$ and let us condition this last probability by the event
$$(E_1,\dots,E_t) = \mathbf{e}.
$$
We have
\begin{multline}P_p\left(\left\{ \begin{array}{c} \forall f \in B,\, B_{\tau(f)} = 0\\
\forall f\in A\setminus B, X_{t-|\Lambda|}(f) = 0\\
\mathrm{reset}(B,A)\end{array}\right\}\bigcap D_1\bigcap\overline{D}_1\right)\\ = 
\kern-15pt\sum_{\mathbf{e}\in\mathrm{reset}(B,A)}\kern-15ptP_p\left(\kern-5pt\begin{array}{c|c}\left\{\kern-5pt \begin{array}{c} \forall f \in B,\, B_{\tau(f)} = 0\\
\forall f\in A\setminus B, X_{t-|\Lambda|}(f) = 0\end{array}\kern-5pt\right\}\bigcap D_1\bigcap\overline{D}_1&(E_1,\dots,E_t) = \mathbf{e}\end{array}\kern-5pt\right)\\
\times P_p\big((E_1,\dots,E_t) = \mathbf{e}\big).\label{new.condE}
\end{multline}
Notice that on the event $\mathrm{reset}(B,A)$, for an edge $f\in B$, we have necessarily $\tau(f)>t-|\Lambda|$.  Therefore the event 
$\oset \forall f \in B,\, B_{\tau(f)} = 0 \cset$ depends on the set of variables $\oset B_s: e_s \in B , s >t-|\Lambda|\cset$. The events $$\oset \forall f\in A\setminus B, X_{t-|\Lambda|}(f) = 0\cset$$ and $\overline{D}_1$ depend on the variables $\oset B_s\,:\, s\leqslant t-|\Lambda|\cset $ and the event $D_1$ does not depend on the variables $\oset B_s : E_s \in B\cset$. All the events above are decreasing, by the BK inequality applied to the random variables $(B_s)_{s\in\mathbb{N}}$, we have 
\begin{multline*}
P_p\left(\kern-5pt\begin{array}{c|c}\left\{\kern-5pt \begin{array}{c} \forall f \in B,\, B_{\tau(f)} = 0\\
\forall f\in A\setminus B, X_{t-|\Lambda|}(f) = 0\end{array}\kern-5pt\right\}\bigcap D_1\bigcap\overline{D}_1&(E_1,\dots,E_t) = \mathbf{e}\end{array}\kern-5pt\right)\\
\leqslant 
P_p\big(\forall f \in B,\, B_{\tau(f)} = 0\,\big|\, (E_1,\dots,E_t) = \mathbf{e}\big)\\
\times P_p\left(\begin{array}{c|c}\begin{array}{c}\forall f\in A\setminus B, X_{t-|\Lambda|}(f) = 0\\
D_1\bigcap\overline{D}_1\end{array}& (E_1,\dots,E_t) = \mathbf{e}\end{array}\right)\\
\leqslant (1-p)^{|B|}P_p\left(\begin{array}{c|c}\begin{array}{c}\forall f\in A\setminus B, X_{t-|\Lambda|}(f) = 0\\
D_1\bigcap\overline{D}_1\end{array}& (E_1,\dots,E_t) = \mathbf{e}\end{array}\right).
\end{multline*}
We use this inequality in \eqref{new.condE} and we obtain
\begin{multline*}P_p\left(\left\{ \begin{array}{c} \forall f \in B,\, B_{\tau(f)} = 0\\
\forall f\in A\setminus B, X_{t-|\Lambda|}(f) = 0\\
\mathrm{reset}(B,A)\end{array}\right\}\bigcap D_1\bigcap\overline{D}_1\right)\\
\leqslant (1-p)^{|B|}P_p\left(\left\{ \begin{array}{c} 
\forall f\in A\setminus B, X_{t-|\Lambda|}(f) = 0\\
\mathrm{reset}(B,A)\end{array}\right\}\bigcap D_1\bigcap\overline{D}_1\right).
\end{multline*}
We replace the numerator in \eqref{new.1-pB} and we sum over the initial configurations, we have 
\begin{multline*}
P_\mu\left(\kern-5pt\begin{array}{c|c}
\begin{array}{c} \forall f \in B,\, Y_t(f) = 0\\
\forall f\in A\setminus B, Y_{t-|\Lambda|}(f) = 0\\
\mathrm{reset}(B,A)\end{array}\kern-5pt&\kern-5pt\begin{array}{c}D_1,\overline{D}_1\end{array}\end{array}\kern-5pt\right) \\
\leqslant (1-p)^{|B|}P_\mu\left(\begin{array}{c|c} \begin{array}{c} 
\forall f\in A\setminus B, X_{t-|\Lambda|}(f) = 0\\
\mathrm{reset}(B,A)\end{array}& D_1,\overline{D}_1\end{array}\right).
\end{multline*}
This last probability is less than 
\begin{equation}\label{new.A-B}\frac{P_\mu \left(\begin{array}{c|c}\begin{array}{c}\forall f\in A\setminus B\quad Y_{t-|\Lambda|}(f) = 0\\ \mathrm{reset}(B,A)\end{array}&\overline{D}_1 \end{array}\right)}{P_\mu(D_1|\overline{D}_1)}.
\end{equation}
Let us estimate separately the numerator and the denominator. In order to calculate the numerator, we use the notation $\overline{M}(A)$ defined as follows:
$$\overline{M}(A) = \oset \omega\,:\,
\forall f \in A\quad \omega(f) = 0
\cset.
$$ 
We obtain 
\begin{multline*}
P_\mu \left(\kern-10pt\begin{array}{c|c}\begin{array}{c}\forall f\in A\setminus B\quad Y_{t-|\Lambda|}(f) = 0\\ \mathrm{reset}(B,A)\end{array}\kern-5pt&\overline{D}_1 \end{array}\kern-5pt\right) \\= \kern-11pt\sum_{y\in \overline{M}(A\setminus B)}\kern-13pt P_\mu \Big( \mathrm{reset}(B,A),\, Y_{t-|\Lambda|} = y\,\big|\, \overline{D}_1\Big).
\end{multline*}
As in the proof of proposition~\ref{new.vconst}, we write
\begin{multline*}P_\mu\Big(\mathrm{reset}(B,A), Y_{t-|\Lambda|}=y\,\big|\, \overline{D}_1 \Big) = \\P_\mu\Big(\mathrm{reset}(B,A)\,\big|\, Y_{t-|\Lambda|} = y, \overline{D}_1  \Big)P_\mu \Big(Y_{t-|\Lambda|}=y\,\big|\, \overline{D}_1\Big).
\end{multline*}
Since the event $\mathrm{reset}(B,A)$ depends only on the variables $$\oset(E_r,B_r)\,:\,t-|\Lambda|< r\leqslant t\cset,$$ it is independent from $Y_{t-|\Lambda|}$ (and also from the event $\overline{D}_1$, as $\overline{D}_1$ is entirely determined by $Y_{t-|\Lambda|}$). We obtain
\begin{multline*}\sum_{y\in \overline{M}(A\setminus B)}P_\mu\Big(\mathrm{reset}(B,A), Y_{t-|\Lambda|}=y\,\big|\, \overline{D}_1 \Big) =\\ P_\mu\Big(\mathrm{reset}(B,A)\Big)P_\mu\Big(Y_{t-|\Lambda|} \in \overline{M}(A\setminus B)\,\big|\,\overline{D}_1\Big).
\end{multline*}
Let us estimate the last probability. Since the second marginal of $P_\mu$ is $P_\mathcal{D}$ and $P_\mathcal{D}(\cdot) = P_p(\cdot|T\nlongleftrightarrow B)$, we have
\begin{multline*}P_\mu\Big(Y_{t-|\Lambda|} \in \overline{M}(A\setminus B)\,\big|\,\overline{D}_1\Big)\\ = \frac{P_p\left(\begin{array}{c}\forall f\in A\setminus B\quad f\text{ closed}\\ \exists C\in \mathcal{C}, d(e,C)\geqslant (m+1)\kappa c\ln |\Lambda|\end{array}\right)}{P_\mathcal{D}\Big(\exists C\in \mathcal{C}, d(e,C)\geqslant (m+1)\kappa c\ln |\Lambda|\Big)P_p\Big(T\nlongleftrightarrow B\Big)}.
\end{multline*}
The event $$\oset \forall f\in A\setminus B\quad f\text{ closed}\cset$$ depends only on the edges at distance less than $(m-1/2)\kappa c\ln|\Lambda|$ from the edge~$e$, while the event $$\oset\exists C\in \mathcal{C}, d(e,C)\geqslant (m+1)\kappa c\ln |\Lambda|\cset$$ depends on the edges at distance larger than $(m+1)\kappa c\ln|\Lambda|$ from $e$. By independence, we have
\begin{multline*}P_p\left(\begin{array}{c}\forall f\in A\setminus B\quad f\text{ closed}\\ \exists C\in \mathcal{C}, d(e,C)\geqslant (m+1)\kappa c\ln |\Lambda|\end{array}\right)=\\ P_p\Big(\forall f\in A\setminus B\quad f\text{ closed}\Big)P_p\Big(\exists C\in \mathcal{C}, d(e,C)\geqslant (m+1)\kappa c\ln |\Lambda|\Big).
\end{multline*}
We obtain therefore
$$P_\mu\Big(Y_{t-|\Lambda|} \in \overline{M}(A\setminus B)\,\big|\,\overline{D}_1\Big)\leqslant P_p\Big(\forall f\in A\setminus B\quad f\text{ closed}\Big) = (1-p)^{|A\setminus B|}.
$$
We conclude that the numerator of~\eqref{new.A-B} is less than 
$$ (1-p)^{|A\setminus B|}P_\mu\Big(\mathrm{reset}(B,A)\Big).
$$
Now, we estimate the denominator in~\eqref{new.A-B}. In fact, this probability is equal to
$$1-P_\mu\Big(\exists s \in ]t-|\Lambda|,t], \forall C\in \mathcal{C}_s, d(e,C)< m\kappa c\ln|\Lambda|\,\big|\,\overline{D}_1\Big).
$$
By corollary~\ref{new.vcconst}, there exists a $\tilde{p}<1$ such that for $p\geqslant \tilde{p}$, for any $c,\kappa_1\geqslant 1$ and for any edge $e$ at distance more than $\kappa_1 c\ln|\Lambda|$ from $\Lambda^c$, we have
$$P_\mu\Big(\exists C\in \mathcal{C}_{t+s},e\in C\,\big|\,\exists c_t\in \mathcal{C}_t,\, d(e,c_t)\geqslant\kappa_1 c\ln|\Lambda|
\Big)\leqslant \frac{1}{|\Lambda|^{\kappa_1 c}}.$$
Since $(c+3)/c\leqslant 4$ for $c\geqslant 1$, there exists a $\kappa_1>1$, such that for any $c\geqslant 1$, 
$$\frac{1}{|\Lambda|^{\kappa_1 c}}\leqslant \frac{1}{|\Lambda|^{c+3}}.
$$
\begin{figure}[ht] 
\centering
\includegraphics[width = 10cm]{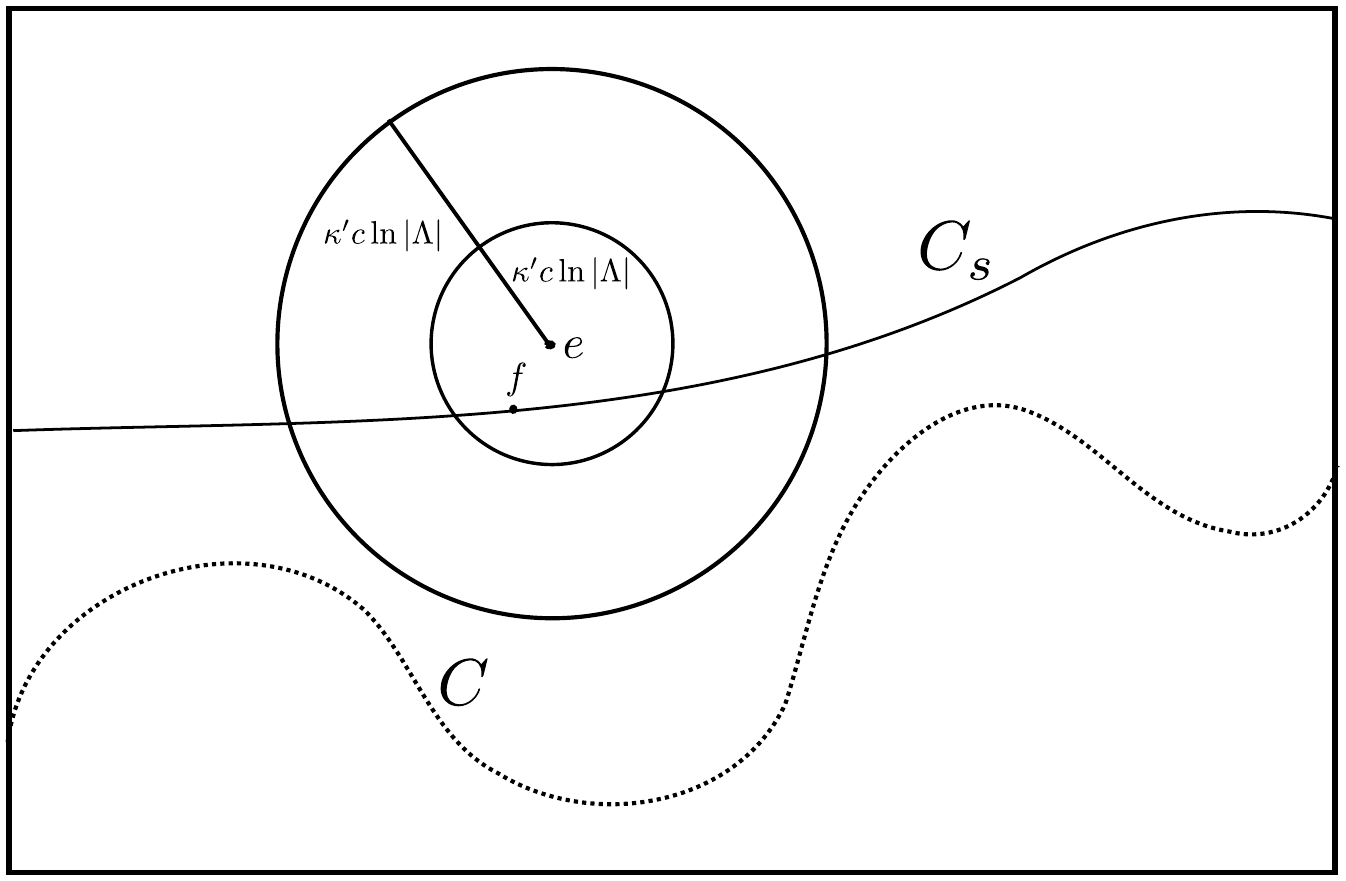}
\caption{The edge $f\in\mathcal{P}_s$ is at distance less than $\kappa' c \ln |\Lambda|$ from $e$ and the cut $C\in \mathcal{C}_{t-|\Lambda|}$ is at distance larger than $2\kappa'c \ln |\Lambda|$ from $e$. }
\label{new.fig:interval}
\end{figure}
\noindent We have therefore, as illustrated in the figure~\ref{new.fig:interval}, for $|\Lambda|\geqslant 2d$:
\begin{multline*}
P_\mu\left(\begin{array}{c|c}\begin{array}{c}\exists s \in ]t-|\Lambda|,t], \forall C'\in \mathcal{C}_s\\ d(e,C')<\kappa_1 c\ln|\Lambda|\end{array}&\overline{D}_1\end{array}\right)
\leqslant\\
\sum_{s\in]t-|\Lambda|,t]}\sum_{f:d(e,f)< \kappa_1 c\ln|\Lambda|}P_\mu \left(\begin{array}{c|c}\exists C_s\in \mathcal{C}_s, f\in C_s&\overline{D}_1\end{array}\right)\leqslant\\
\sum_{s\in]t-|\Lambda|,t]}\sum_{f:d(e,f)<\kappa_1 c\ln|\Lambda|}\frac{1}{|\Lambda|^{c+3}}\leqslant \frac{1}{|\Lambda|^{c}}\leqslant \frac{1}{2}.
\end{multline*}
Let $\kappa \geqslant \kappa_1$, the probability~\eqref{new.A-B} is less than 
$$2(1-p)^{|A\setminus B|}P_\mu\Big(\mathrm{reset}(B,A)\Big).
$$
We bound from above each term of~\eqref{new.pMk} and we obtain an upper bound for~\eqref{new.pMA}:
$$\sum_{\mbox{\scriptsize$\begin{array}{c}A\subset \mathrm{support}(\Gamma)\\ |A| = k \end{array}$}}\kern-20pt P_\mu\left(\kern-5pt\begin{array}{c|c}Y_t \in M(A)&D_1,\overline{D}_1\end{array}\kern-5pt\right)\leqslant \kern -20 pt\sum_{\mbox{\scriptsize$\begin{array}{c}A\subset \mathrm{support}(\Gamma)\\ |A| = k, B\subset A\end{array}$}}\kern -25pt 2(1-p)^{k}P_\mu\Big(\mathrm{reset}(B,A)\Big).
$$
For each set $A$ fixed, we have $$
\sum_{B\subset A}P_\mu\Big(\mathrm{reset}(B,A)\Big) = 1.
$$
Therefore, we obtain 
\begin{equation}P_\mu\left(\kern-5pt\begin{array}{c|c}Y_t \in M(A)&D_1,\overline{D}_1\end{array}\kern-5pt\right)\leqslant 2(1-p)^{|A|}\label{new.G1m}
\end{equation}
$$\sum_{\mbox{\scriptsize$\begin{array}{c}A\subset \mathrm{support}(\Gamma)\\ |A| = k \end{array}$}}\kern-20pt P_\mu\left(\kern-5pt\begin{array}{c|c}Y_t \in M(A)&D_1,\overline{D}_1\end{array}\kern-5pt\right)\leqslant 2\binom{|\mathrm{support}(\Gamma)|}{k}(1-p)^k.
$$
Finally, combined with~\eqref{new.pED}, we obtain an upper bound for~\eqref{new.n=1} which is
\begin{multline*}2(1-p)^{(\kappa c\ln|\Lambda|)/2d}\sum_{0\leqslant k \leqslant (\kappa c\ln|\Lambda|)/2d}\binom{|\mathrm{support}(\Gamma)|}{k}\left(\frac{s}{|\Lambda|}\right)^{(\kappa c\ln|\Lambda|)/2d-k}\\ \leqslant 2\left((1-p)\left(1+\frac{s}{|\Lambda|}\right)\right)^{(\kappa c\ln|\Lambda|)/2d}.
\end{multline*}
We sum over the possible choices for the path $\Gamma$, by the lemma~\ref{new.count}, the sum in~\eqref{new.n=1} is less than
$$2|\Lambda|\left(\alpha(d)(1-p)\left(1+s/|\Lambda|\right)\right)^{(\kappa c\ln|\Lambda|)/2d}.
$$
There is a $\kappa>0$, such that for $p\geqslant \tilde{p}$, such that this term is less than 
$$
\frac{1}{|\Lambda|^c}.
$$
We obtain the result stated in the lemma.
\end{proof}
We next show a generalisation of proposition~\ref{new.vconst} and corollary~\ref{new.vcconst} which is an essential ingredient for the proof of theorem~\ref{new.main1}.
\begin{prop}\label{new.lemrec}
We have the following estimate:
\begin{multline*}
\exists \tilde{p}<1\quad \exists \kappa>1 \quad \forall p \geqslant \tilde{p}\quad\forall c \geqslant 2 \quad \forall \Lambda \quad |\Lambda|\geqslant 12(2\kappa d)^d \\ \forall e\subset \Lambda \quad d(e,\Lambda^c)\geqslant \kappa c^2\ln^2|\Lambda|\\ \forall n \geqslant 1\quad n\leqslant c\ln|\Lambda| \quad \forall m\geqslant 0\quad n+m\leqslant c\ln|\Lambda|\\ \forall s\in \{1,\dots,|\Lambda|\} \quad  \forall t\geqslant n|\Lambda|\\
P_\mu\left(\begin{array}{c|c}\kern-8pt\begin{array}{c}\exists C \in \mathcal{C}_{t+s}\\d(e,C)\leqslant m\kappa c \ln|\Lambda|\end{array}\kern-5pt&\kern-5pt\begin{array}{c}\forall k\in \oset 1,\dots,n\cset\\
\forall r\in ]t-k|\Lambda|,t-(k-1)|\Lambda|]\\
\exists C_r\in \mathcal{C}_r\quad d(e,C_r)\geqslant (k+m)\kappa c\ln|\Lambda|\\
\exists C'\in \mathcal{C}_{t-n|\Lambda|}\quad d(e,C')\geqslant (n+m+1)\kappa c\ln|\Lambda| \end{array}\end{array}\kern-8pt\right)\\ \leqslant \frac{1}{|\Lambda|^c}.
\end{multline*}
\end{prop}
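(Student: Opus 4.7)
The plan is to extend the proof of lemma~\ref{new.recinit} from one past interval to $n$ past intervals; the case $n=1$ is precisely lemma~\ref{new.recinit} applied with $m+1$ in place of $m$, so I concentrate on $n\geqslant 2$ (the argument is uniform in $n$).

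Writing the conditional probability as $P_\mu(A\cap D_n\cap\overline{D}_n)/P_\mu(D_n\cap\overline{D}_n)$, where $A$ denotes the event in the conclusion and $D_n,\overline{D}_n$ denote the conditioning events, the numerator will be controlled as in lemma~\ref{new.recinit} by constructing, on $A$, a simple decreasing closed STP of length $\kappa c\ln|\Lambda|/(2d)$ starting near $e$ at time $t+s$ and ending after $t$, whose space projection $\Gamma$ consists of edges that must all be closed in $Y_t$. For each $A'\subset\mathrm{support}(\Gamma)$ I partition $A'$ into $n+1$ disjoint classes $A_1,\ldots,A_{n+1}$: for $k\leqslant n$, $A_k$ collects the edges whose latest update during $]t-n|\Lambda|,t]$ falls in $]t-k|\Lambda|,t-(k-1)|\Lambda|]$, while $A_{n+1}$ collects those edges never updated during $]t-n|\Lambda|,t]$. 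For $k\leqslant n$, the BK inequality applied to the Bernoullis $B_r$ of the $k$-th interval gives a factor $(1-p)^{|A_k|}$ independently of the dynamics in the other intervals. For $A_{n+1}$, the constraint $Y_{t-n|\Lambda|}(f)=0$ is handled by the same spatial independence argument as in lemma~\ref{new.recinit}, now relying on $\overline{D}_n$ (exists cut at distance $\geqslant (n+m+1)\kappa c\ln|\Lambda|$ from $e$), which is compatible with the hypotheses $d(e,\Lambda^c)\geqslant\kappa c^2\ln^2|\Lambda|$ and $n+m\leqslant c\ln|\Lambda|$. Summing the partition probabilities to one yields $P_\mu(Y_t\in M(A')\mid D_n,\overline{D}_n)\leqslant C(1-p)^{|A'|}$. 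For the denominator, iterating corollary~\ref{new.vcconst} backward over each of the $n$ intervals (mimicking the derivation of the factor~$2$ in the proof of lemma~\ref{new.recinit}) gives $P_\mu(D_n\mid\overline{D}_n)\geqslant 1/2$ for $|\Lambda|$ large enough and $n\leqslant c\ln|\Lambda|$. Summing over $\Gamma$ via lemma~\ref{new.count} produces the same final estimate as in lemma~\ref{new.recinit}, namely at most $2(\alpha(d)(1-p)(1+s/|\Lambda|))^{\kappa c\ln|\Lambda|/(2d)}$, which is $\leqslant|\Lambda|^{-c}$ for $\kappa$ large enough depending on $d$ and $p$ close enough to $1$.

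The main obstacle is the iterated reset decomposition combined with the layered spatial conditioning at the $n$ different levels. At each intermediate level $k$, the BK inequality applied to the Bernoullis $B_r$ of the $k$-th interval must remain compatible with the conditioning event ``exists cut at distance $\geqslant (k+m)\kappa c\ln|\Lambda|$ from $e$'' in the same interval. The decoupling rests on the observation that closure events of edges close to $e$ depend only on the $B_r$ and $E_r$ variables at those edges, while the distant cut condition is a spatial event about $Y_r$ far from $e$; this decoupling is clean under the product measure $P_p$ and transfers to $P_\mu$ via the identity $P_\mathcal{D}(\cdot)=P_p(\cdot\mid T\nlongleftrightarrow B)$, exactly as in the proof of lemma~\ref{new.recinit}. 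Finally, the constant $C$ accumulated over the $n$ levels of the reset decomposition grows at most polynomially in $n$; since $n\leqslant c\ln|\Lambda|$, this is absorbed into the final $|\Lambda|^{-c}$ estimate after a mild enlargement of $\kappa$.
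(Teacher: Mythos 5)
Your partition of $\mathrm{support}(\Gamma)$ into $n+1$ classes indexed by the last-update interval is a genuinely different organization from the paper's, which proves two families of inequalities $(H_{k,m})$ and $(G_{k,m})$ jointly by induction on the number of intervals $k$, peeling one interval off at a time. Your simultaneous treatment would indeed yield a factor $(1-p)^{|A'|}$ for the near-edge closures — BK on the Bernoulli variables of each interval for $A_1,\dots,A_n$, spatial independence at time $t-n|\Lambda|$ for $A_{n+1}$ — modulo the decoupling from the far-cut conditioning, which I grant works as in the one-interval case. The question is where the real difficulty has gone.

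It has gone into the denominator bound, which you present as routine but which is in fact the crux. You need $P_\mu\left(D_{1,m}\cap\cdots\cap D_{n,m}\mid\overline{D}_{n,m}\right)\geq 1/2$. In lemma~\ref{new.recinit} the analogous bound $P_\mu(D_1\mid\overline{D}_1)\geq 1/2$ follows directly from corollary~\ref{new.vcconst}, because $\overline{D}_1$ sits at the single time $t-|\Lambda|$ and $D_1^c$ concerns times at lag at most $|\Lambda|$ from it. For general $n$ you need $\sum_{k=1}^n P_\mu\left(D_{k,m}^c\mid\overline{D}_{n,m}\right)$ to be small, but for $k<n$ the event $D_{k,m}^c$ concerns times at lag up to $(n-k+1)|\Lambda|>|\Lambda|$ from $t-n|\Lambda|$, where corollary~\ref{new.vcconst} says nothing. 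To shorten the lag you must condition on the intermediate events $D_{k+1,m},\dots,D_{n,m}$, which in turn requires a speed estimate valid \emph{under conditioning that already spans several past intervals} — and that estimate is precisely $(H_{j,m+1})$ for $j<n$, the paper's inductive hypothesis. In other words, ``iterating corollary~\ref{new.vcconst} backward'' is not an iteration of that unconditional corollary; carried out rigorously, it \emph{is} the paper's induction, and your direct argument presupposes the very statement it is meant to bypass. A minor further point: the constant accumulated over the $n$ levels is $2^n$, not polynomial in $n$; it is still harmless since $n\leq c\ln|\Lambda|$ allows it to be absorbed by enlarging $\kappa$, as in the paper.
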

\begin{proof}
Notice that for the case $n = 1$, this proposition corresponds to the lemma~\ref{new.recinit}. Let $\kappa$ be a constant which will be determined at the end of the proof. We start by introducing some notations. For $m\in \mathbb{N}$ and  $k\geqslant 1$, we define $D_{k,m}$ to be the event
$$ D_{k,m}= \left\lbrace
\begin{array}{c}\forall r\in]t-k|\Lambda|,t-(k-1)|\Lambda|]\\
\exists C_r\in \mathcal{C}_r\quad d(e,C_r)\geqslant (k+m)\kappa c\ln|\Lambda|\end{array}
\right\rbrace
$$
and $\overline{D}_{k,m}$ the event
$$\overline{D}_{k,m} = \oset\exists C\in \mathcal{C}_{t-k|\Lambda|}\quad d(e,C)\geqslant (k+m+1)\kappa c\ln|\Lambda|\cset.
$$
For $\kappa \geqslant 1$, $c\geqslant 2$, $k\in \mathbb{N}^*$, $e\subset \Lambda$ and $t,s\in\mathbb{N}^*$, we denote by $(H_{k,m})$ the following inequality:
$$
\begin{array}{cc}
(H_{k,m}):&
P_\mu\left(\begin{array}{c|c}\kern-8pt\begin{array}{c}\exists C\in \mathcal{C}_{t+s}\\ d(e,C)\leqslant m\kappa c \ln|\Lambda| \end{array}\kern-5pt&
D_{1,m},\dots,D_{k,m},
\overline{D}_{k,m}\end{array}\kern-5pt\right)\leqslant \frac{1}{|\Lambda|^c}.
\end{array}
$$
Our goal is to show that there exist $\tilde{p}<1$ and $\kappa>1$ such that for $p\geqslant \tilde{p}$, $c\geqslant 2$, $e\subset \Lambda$ at distance larger than $\kappa c^2\ln^2|\Lambda|$ from $\Lambda^c$, $\smash{s\in \oset 1,\dots,|\Lambda|\cset}$, the inequality $(H_{k,m})$ holds for any $1\leqslant k+m\leqslant c\ln|\Lambda|$ and $t\geqslant (k+m)|\Lambda|$. In particular, the inequality stated in the proposition corresponds to the case $(k,m) = (n,0)$. In order to show this proposition by induction on $k$, we introduce an auxiliary inequality $(G_{k,m})$ for $A\subset \Lambda, d(e,A)\leqslant (\kappa c\ln|\Lambda|)/2$:
$$\begin{array}{cc}
(G_{k,m}):&
\kern-8ptP_\mu\Big(\begin{array}{c|c}\kern-5pt\forall f\in A\quad Y_t(f)  = 0&D_{1,m},\dots,D_{k+m},\overline{D}_{k+m}\end{array}\kern-5pt\Big)\leqslant 2^{k}(1-p)^{|A|}.
\end{array}
$$ 
By lemma~\ref{new.recinit}, there exist $\tilde{p}<1$ and $\kappa>1$ such that for $p\geqslant \tilde{p}$, $c\geqslant 2$, $e\subset \Lambda$ at distance larger than $\kappa c^2\ln^2|\Lambda|$ from $\Lambda^c$ and $t\geqslant c|\Lambda|\ln|\Lambda|$, $s\in \oset 1,\dots,|\Lambda|\cset$, the inequalities $(H_{1,m})$ hold for all $m\leqslant c\ln|\Lambda|-1$, meanwhile, the inequalities $(G_{1,m})$ was also proved in \eqref{new.G1m}. For this $\tilde{p}$, there exists a $\kappa>0$ such that, for any $c\geqslant 2$, we have
$$\left(\alpha(d)2^{1+2d/\kappa}(1-\tilde{p})\right)^{(\kappa c\ln|\Lambda|)/2d}\leqslant \frac{1}{|\Lambda|^c}.
$$
Notice that for this $\kappa$, the inequality in lemma~\ref{new.recinit} is also satisfied. Let us fix $c\geqslant 2$ and let us show the inequalities by induction on the integer $k$. Let $k<c\ln|\Lambda|$, we suppose that the inequalities $(H_{k,m})$ and $(G_{k,m})$ hold for all $m\in\mathbb{N}$ such that $k+m\leqslant c\ln|\Lambda|$. Let us prove first the inequality $(G_{k+1,m})$ for a $m\in\{0,\dots,\lfloor c\ln|\Lambda|\rfloor-k-1\}$. We reuse the notations $\mathrm{reset}(I,A,B)$ and $\overline{M}(A)$ defined for a subset $B$ of $A$ and a time interval $I$:
\begin{align*}
\mathrm{reset}(I,A,B) & = \left\lbrace\begin{array}{c} \forall e \in B,\, \exists r\in I, E_r = e\\ \forall r\in I, E_r\notin A\setminus B \end{array}\right\rbrace,\\
\overline{M}(A) &= \oset \omega\,:\,
\forall f \in A\quad \omega(f) = 0
\cset.
\end{align*}
We denote by $I_1$ the interval $]t-|\Lambda|,t]$. We rewrite the probability $(G_{k+1,m})$ as in the proof of lemma~\ref{new.recinit}:
\begin{multline*}
P_\mu\Big(\begin{array}{c|c}\forall f\in A\quad Y_t(f)  = 0&D_{1,m},\dots,D_{k+1,m},\overline{D}_{k+1,m}\end{array}\Big)\\ = \sum_{B\subset A} P_\mu\Big(\kern-5pt\begin{array}{c|c}\forall f\in A\quad Y_t(f)  = 0, \, \mathrm{reset}(I_1,A,B)&D_{1,m},\dots,D_{k+1,m},\overline{D}_{k+1,m}\end{array}\kern-5pt\Big).
\end{multline*}
For each $B\subset A$, we have 
\begin{multline*}P_\mu\Big(\begin{array}{c|c}\forall f\in A\quad Y_t(f)  = 0, \, \mathrm{reset}(I_1,A,B)&D_{1,m},\dots,D_{k+1,m},\overline{D}_{k+1,m}\end{array}\Big)\\ = P_\mu\left(\kern-5pt\begin{array}{c|c}
\begin{array}{c} \forall f \in B,\, Y_t(f) = 0\\
\forall f\in A\setminus B, Y_{t-|\Lambda|}(f) = 0\\
\mathrm{reset}(I_1,A,B)\end{array}&D_{1,m},\dots,D_{k+1,m},\overline{D}_{k+1,m}\end{array}\kern-5pt\right).
\end{multline*}
We use the same arguments as in the inequality \eqref{new.1-pB} of the lemma~\ref{new.recinit} to obtain a factor $1-p$ for each edge where the event $\mathrm{reset}$ is realised. We have 
\begin{multline*}P_\mu\left(\kern-5pt\begin{array}{c|c}
\begin{array}{c} \forall f \in B,\, Y_t(f) = 0\\
\forall f\in A\setminus B, Y_{t-|\Lambda|}(f) = 0\\
\mathrm{reset}(I_1,A,B)\end{array}&D_{1,m},\dots,D_{k+1,m},\overline{D}_{k+1,m}\end{array}\kern-5pt\right)\\
\leqslant (1-p)^{|B|}P_\mu\Big(Y_{t-|\Lambda|}\in \overline{M}(A\setminus B),\mathrm{reset}(I_1,A,B)\,\big|\,D_{1,m},\dots,D_{k+1,m},\overline{D}_{k+1,m}\Big).
\end{multline*}
The event $\mathrm{reset}(I_1,A,B)$ is independent of what happens before and until $t-|\Lambda|$ and of $D_{2,m},\dots,D_{k+1,m},\overline{D}_{k+1,m}$. Therefore, this last probability is less than or equal to
\begin{multline*}
\frac{P_\mu\Big(Y_{t-|\Lambda|}\in \overline{M}(A\setminus B),\mathrm{reset}(I_1,A,B)\,\big|\,D_{2,m},\dots ,D_{k+1,m}\overline{D}_{k+1,m}\Big)}{P_\mu\Big(D_{1,m}\,\big|\,D_{2,m},\dots,D_{k+1,m},\overline{D}_{k+1,m}\Big)}=\\
\frac{P_\mu\Big(\mathrm{reset}(I_1,A,B)\Big)}{P_\mu\Big(D_{1,m}\,\big|\,D_{2,m},\dots,D_{k+1,m},\overline{D}_{k+1,m}\Big)}\\
\times P_\mu\Big(Y_{t-|\Lambda|}\in \overline{M}(A\setminus B)\,\big|\,D_{2,m},\dots,D_{k+1,m},\overline{D}_{k+1,m}\Big).
\end{multline*}
We apply the inequality $(G_{k,m+1})$, at time $t-|\Lambda|$. The last probability is less or equal than 
$$2^{k}(1-p)^{|A\setminus B|}.
$$
For the denominator, we apply $(H_{k,m+1})$ at time $t-1$ and we obtain
\begin{multline*}P_\mu\Big(D_{1,m}\,\big|\,D_{2,m},\dots,D_{k+1,m},\overline{D}_{k+1,m}\Big)\\
\geqslant 1-P_\mu\left(\kern-10pt\begin{array}{c|c}\begin{array}{c}\exists r\in]t-|\Lambda|,t]\quad\exists C_r\in \mathcal{C}_r\\d(e, C_r)\leqslant m\kappa c\ln|\Lambda|\end{array}\kern-5pt&D_{2,m},\dots,D_{k+1,m},\overline{D}_{k+1,m}\end{array}\kern-5pt\right)
\\\geqslant 1-\frac{|\Lambda|}{|\Lambda|^{c}}.
\end{multline*}
Therefore, for $|\Lambda|\geqslant 2$, we have 
$$\frac{1}{|\Lambda|^{c-1}}\leqslant \frac{1}{2}.
$$
Therefore, we have for the denominator
$$P_\mu\Big(D_{1,m}\,\big|\,D_{2,m},\dots,D_{k+1,m},\overline{D}_{k+1,m}\Big)\geqslant \frac{1}{2}.
$$
We obtain $(G_{k+1,m})$ by summing over the choices of $B$:
\begin{multline*}
P_\mu\Big(\begin{array}{c|c}\forall f\in A\quad Y_t(f)  = 0&D_{1,m},\dots,D_{k+1,m},\overline{D}_{k+1,m}\end{array}\Big)
\\ \leqslant \frac{2^{k}(1-p)^{|A|}}{1/2}\sum_{B\subset A}P_\mu\Big(\mathrm{reset}(I_1,A,B)\Big) = 2^{k+1}(1-p)^{|A|}.
\end{multline*}
In order to obtain $(H_{k+1,m})$, we will study the STP obtained as in the corollary~\ref{new.vcconst}. We recall that this STP is of length at least $(\kappa c\ln|\Lambda|)/2d$. We fix first the space projection of the STP, which we denote by $\Gamma$. As in the proof of lemma~\ref{new.recinit} and proposition~\ref{new.vconst}, we study separately the edges that close after the time $t$ and the edges which are closed at time $t$ by conditioning the probability by the configuration $Y_t$. For the edges which become closed after $t$, we apply proposition~\ref{new.BKFermeutre} and we obtain that the probability for obtaining a simple closed decreasing STP $\gamma$ between $t$ and $t+s$ satisfying $\mathrm{Space}(\gamma) = \Gamma$ is less than
\begin{multline}\sum_{0\leqslant j \leqslant \mathrm{support}(\Gamma)}\sum_{A\subset\mathrm{support}(\Gamma) : |A| = j}\left(\frac{s}{|\Lambda|}(1-p)\right)^{(\kappa c\ln|\Lambda|)/2d-j}\times \\P\Big(Y_t\in \overline{M}(A)\,|\, D_{1,m},\dots,D_{k+1,m},\overline{D}_{k+1,m}\Big).\label{new.Hn}
\end{multline}
We apply the inequality $(G_{k+1,m})$ for the last probability and we have
$$P\Big(Y_t\in \overline{M}(A)\,|\, D_{1,m},\dots,D_{k+1,m},\overline{D}_{k+1,m}\Big) \leqslant 2^{k+1}(1-p)^j.
$$
Therefore, the sum in~\eqref{new.Hn} is less than 
\begin{multline*}
\sum_{0\leqslant j\leqslant \mathrm{support}(\Gamma)}\binom{\mathrm{support}(\Gamma)}{j}\left(\frac{s}{|\Lambda|}\right)^j 2^{k+1}(1-p)^{(\kappa c\ln|\Lambda|)/2d}
\leqslant \\ 2^{k+1}\Big(\left(1+s/|\Lambda|\right)(1-p)\Big)^{(\kappa c\ln|\Lambda|)/2d}.
\end{multline*}
For $|\Lambda|\geqslant 2d$, $k+1\leqslant c\ln |\Lambda|$ and $s\leqslant |\Lambda|$, we have $$2^{k+1}\Big(\left(1+s/|\Lambda|\right)(1-p)\Big)^{(\kappa c\ln|\Lambda|)/2d}\leqslant \left(2^{1+2d/\kappa}(1-p)\right)^{(\kappa c\ln|\Lambda|)/2d}.
$$
We sum over the choices for $\Gamma$ by using the lemma~\ref{new.count}, and we have
\begin{multline*}P_\mu\left(\kern-8pt\begin{array}{c|c}\begin{array}{c}\exists C\in \mathcal{C}_{t+s}\\
d(e,C)\leqslant m\kappa c\ln|\Lambda|\end{array}\kern-5pt&\kern-5pt\begin{array}{c}D_{1,m},\dots,D_{k+1,m},\overline{D}_{k+1,m} \end{array}\end{array}\kern-8pt\right)\\\leqslant |\Lambda|\left(\alpha(d)2^{1+2d/\kappa}(1-p)\right)^{(\kappa c\ln|\Lambda|)/2d}.
\end{multline*}
For $p\geqslant\tilde{p}$ and the $\kappa$ chosen at the beginning of the proof, for any $c\geqslant 2$, we have
$$|\Lambda|\left(\alpha(d)2^{1+2d/\kappa}(1-p)\right)^{(\kappa c\ln|\Lambda|)/2d}\leqslant \frac{1}{|\Lambda|^c}.
$$
Notice that the constant $\kappa$ doesn't depend on $k$. Therefore, the inequalities $\oset(H_{k,m}),(G_{k,m})\,:\, 1\leqslant k+m\leqslant c\ln|\Lambda| \cset$ are all satisfied for $p\geqslant\tilde{p}$ and this $\kappa$. This concludes the induction.
\end{proof}
\section{The law of an edge far from a cut} \label{new.th2}
We now show the theorem~\ref{new.main1} with the help of propositions~\ref{new.lemrec}. 

\begin{proof}[Proof of theorem~\ref{new.main1}]
Since $\mu$ is the stationary distribution of the process $(X_t,Y_t)_{t\in\mathbb{N}}$, we can choose a time $t$ and show the result for the configuration $(X_t,Y_t)$. For a time $r\in \mathbb{N}$ and a distance $\ell>0$, we introduce the events 
$$ \overline{D}(r,\ell) = \oset \exists C\in \mathcal{C}_r,d(e,C)\geqslant \ell\cset$$
and 
$$D(r,\ell) = \oset \forall \theta \in ]r,r+|\Lambda|], \, \exists C_\theta \in \mathcal{C}_\theta,d(e,C_\theta)\geqslant \ell \cset.
$$
We have to estimate the probability 
\begin{equation}
\label{new.probmain}P_\mu\left( e\in \mathcal{I}_t\,\Big|\,\overline{D}(t,\kappa'c^2\ln^2|\Lambda|)\right),
\end{equation}
where $\kappa'$ is a constant which will be determined later. For the moment, we can simply consider a large $\kappa'$.
We notice first that, on the event $\overline{D}(t,\kappa'c^2\ln^2|\Lambda|)$, there is a cut which is disjoint from $e$, so the edge $e$ cannot be pivotal, thus $$P_\mu\left( e\in \mathcal{I}_t\,\Big |\,\overline{D}(t,\kappa'c^2\ln^2|\Lambda|)\right) = P_\mu\left(e\in \mathcal{I}_t\setminus \mathcal{P}_t\,\Big |\, \overline{D}(t,\kappa'c^2\ln^2|\Lambda|)\right).$$ 
We consider the last time when $e$ is pivotal, i.e., the time $t-s$ defined by 
$$s = \inf \,\big\{\,r\geqslant 0 \,:\, e\in \mathcal{P}_{t-r}\,\big\}.
$$
On the interval $]t-s,t]$, the edge $e$ is not pivotal and it remains in the interface. Therefore, this edge is not modified during this interval, so we have 
$$P_\mu\left( e\in \mathcal{I}_t\,\Big |\,\overline{D}(t,\kappa'c^2\ln^2|\Lambda|)\right)\leqslant P_\mu\left(\kern-5pt\begin{array}{c|c}\kern-5pt\begin{array}{c}\exists s \geqslant 0,e\in \mathcal{P}_{t-s}\\ \forall r\in ]t-s,t] \\ e\notin \mathcal{P}_{r}
, E_r\neq e
\end{array}\kern-5pt&\kern-1pt\overline{D}(t,\kappa'c^2\ln^2|\Lambda|)\end{array}\kern-6pt\right).
$$
The events appearing in this probability concern only the process $(E_t)_{t\in\mathbb{N}}$ and the process $(Y_t)_{t\in\mathbb{N}}$. These processes are both reversible. By reversing the time, we obtain that
\begin{multline*}P_\mu\left( \begin{array}{c|c}\begin{array}{c}\exists s \geqslant 0,e\in \mathcal{P}_{t-s}\\ \forall r\in ]t-s,t] \\ e\notin \mathcal{P}_{r}
, E_r\neq e
\end{array}&\overline{D}(t,\kappa'c^2\ln^2|\Lambda|)\end{array}\right)=\\ P_\mu\left( \begin{array}{c|c}\begin{array}{c}\exists s \geqslant 0,e\in \mathcal{P}_{t+s}\\ \forall r\in ]t,t+s] \\ e\notin \mathcal{P}_{r}
, E_r\neq e
\end{array}&\overline{D}(t,\kappa'c^2\ln^2|\Lambda|)\end{array}\right).
\end{multline*}
Notice that the sequence $(E_r)_{t< r\leqslant t+s}$ is independent of the configuration $Y_t$. We estimate first the probability that the interval $]t,t+s]$ is too large. More precisely, we will show that $s$ is at most of order $|\Lambda|\ln |\Lambda|$. Let $c\geqslant 1$ be a constant. We have
\begin{multline}\label{new.psbig}
P_\mu\left( \begin{array}{c|c}\begin{array}{c}\exists s \geqslant c|\Lambda|\ln|\Lambda|,e\in \mathcal{P}_{t+s}\\ \forall r\in ]t,t+s] \\ e\notin \mathcal{P}_{r}
, E_r\neq e
\end{array}&\overline{D}(t,\kappa'c^2\ln^2|\Lambda|)\end{array}\right)
\leqslant \\P_\mu\left(\begin{array}{c} \forall r\in ]t,t+c|\Lambda|\ln|\Lambda|] \\  E_r\neq e
\end{array}\right)\leqslant \left(1-\frac{1}{|\Lambda|}\right)^{c|\Lambda|\ln|\Lambda|} \leqslant \frac{1}{|\Lambda|^c}.
\end{multline}
We now consider the case where $s<c |\Lambda|\ln |\Lambda|$. We split the interval $[t,t+s]$ into subintervals of length $|\Lambda|$. We set, for $0\leqslant i< s/|\Lambda|$, $$t_i = t+i|\Lambda|.$$
Let us distinguish two cases according to the positions of the cuts during the time interval $]t,t_1]$. We consider a constant $\kappa>0$ which will be chosen later. If the event $D(t,\kappa'c^2\ln^2|\Lambda|-\kappa c\ln|\Lambda|)$ doesn't occur, then there exists a time $\tau\in ]t,t_1]$ and a cut of $\mathcal{C}_\tau$ which visits at least an edge $f$ at distance less than $\kappa'c^2\ln^2|\Lambda|-\kappa c\ln|\Lambda|$ from $e$. Therefore, for a $s<c|\Lambda|\ln|\Lambda|$ fixed, we have
\begin{multline*}P_\mu\left(\begin{array}{c|c}e\in \mathcal{P}_{t+s}&\overline{D}(t,\kappa'c^2\ln^2|\Lambda|)\end{array}\right)\leqslant\\
P_\mu\left(\begin{array}{c|c}\begin{array}{c} \exists \tau\in]t,t_1],\exists C_\tau\in \mathcal{C}_\tau, \exists f\in C_\tau \\ d(e,f)\leqslant \kappa'c^2\ln^2|\Lambda|-\kappa c\ln |\Lambda|\end{array} & \overline{D}\left(t,\kappa'c^2\ln^2|\Lambda|\right)\end{array}\right)+ \\
P_\mu\left(\begin{array}{c|c}\kern-10pt\begin{array}{c} e\in \mathcal{P}_{t+s}\\D\left( t,\kappa'c^2\ln^2|\Lambda|-\kappa c\ln |\Lambda|\right)\end{array}\kern-5pt &\kern-2pt \overline{D}\left(t,\kappa'c^2\ln^2|\Lambda|\right)\end{array}\kern-5pt\right).
\end{multline*}
We estimate the first probability with the help of corollary~\ref{new.vcconst}. This case is illustrated in figure~\ref{new.fig:interval} but this time with the radius of the circles taken to be $\kappa'c^2\ln^2|\Lambda|$ and $\kappa'c^2\ln^2|\Lambda|-\kappa c\ln|\Lambda|$. There is a $\tilde{p}<1$, such that, for $p\geqslant \tilde{p}$ and $\kappa>0$, for any $c\geqslant 2$, $0<\tau\leqslant|\Lambda|$ and an edge $f$ at distance less than $\kappa'c^2\ln^2|\Lambda|-\kappa c\ln |\Lambda|$ from $e$, we have 
$$P_\mu\left(\kern-8pt\begin{array}{c|c}\begin{array}{c}\exists C_\tau \in \mathcal{C}_\tau, f\in C_\tau\end{array}\kern-8pt&\kern-1pt \overline{D}(t,\kappa'c^2\ln^2|\Lambda|)\end{array}\kern-5pt\right)\leqslant \frac{1}{|\Lambda|^{c+2}}.
$$
Therefore, the following inequality holds:
\begin{multline*}P_\mu\left(\begin{array}{c|c}\begin{array}{c} \exists \tau\in]t,t_1],\exists C_\tau\in \mathcal{C}_\tau, \exists f\in C_\tau \\ d(e,f)\leqslant \kappa'c^2\ln^2|\Lambda|-\kappa c\ln |\Lambda|\end{array} & \overline{D}\left(t,\kappa'c^2\ln^2|\Lambda|\right)\end{array}\right)\leqslant \\
\sum_{\tau\in ]t,t_1]}P_\mu\left(\kern-8pt\begin{array}{c|c}\begin{array}{c}\exists C_\tau \in \mathcal{C}_\tau,d(e,C_\tau)\leqslant \kappa'c^2\ln^2|\Lambda|-\kappa c\ln |\Lambda|\end{array}\kern-8pt&\kern-1pt \overline{D}(t,\kappa'c^2\ln^2|\Lambda|)\end{array}\kern-5pt\right)\leqslant  \frac{2d}{|\Lambda|^c}.
\end{multline*}
We then obtain 
\begin{multline*}
P_\mu\left(\kern-10pt\begin{array}{c|c}\begin{array}{c}e\in \mathcal{P}_{t+s}\end{array}&\overline{D}(t,\kappa'\ln^2|\Lambda|)\end{array}\kern-5pt\right)\leqslant\\\frac{2d}{|\Lambda|^c}+P_\mu\left(\kern-10pt\begin{array}{c|c} \begin{array}{c}e\in \mathcal{P}_{t+s}\end{array} \kern-5pt&\kern-5pt \begin{array}{c} D\left( t,\kappa'\ln^2|\Lambda|-\kappa\ln |\Lambda|\right)\\ \overline{D}\left(t,\kappa'\ln^2|\Lambda|\right)\end{array}\end{array}\kern-10pt\right).
\end{multline*}
Starting from this inequality, we apply proposition~\ref{new.lemrec} and repeat the previous argument at the times $t_i$, $0 \leqslant i<s/|\Lambda|$. By iteration, we obtain that, for any $n<s/|\Lambda|$ and $|\Lambda|\geqslant 12(2\kappa d)^d$,
\begin{multline}\label{new.forec}
P_\mu\left(\kern-10pt\begin{array}{c|c}\begin{array}{c}e\in \mathcal{P}_{t+s}\end{array}&\overline{D}(t,\kappa'c^2\ln^2|\Lambda|)\end{array}\kern-5pt\right)\leqslant\\\frac{2dn}{|\Lambda|^c}+P_\mu\left(\kern-10pt\begin{array}{c|c} \begin{array}{c}e\in \mathcal{P}_{t+s}\end{array} \kern-5pt&\kern-5pt \begin{array}{c} \bigcap_{1\leqslant i\leqslant n}\kern-5pt D\left( t_i,\kappa'c^2\ln^2|\Lambda|-i\kappa c\ln |\Lambda|\right)\\ \overline{D}\left(t,\kappa'c^2\ln^2|\Lambda|\right)\end{array}\end{array}\kern-10pt\right).
\end{multline}
We consider this inequality with $n = \lfloor s/|\Lambda|\rfloor<c\ln|\Lambda|$:
\begin{multline*}
P_\mu\left(\kern-10pt\begin{array}{c|c}\begin{array}{c}e\in \mathcal{P}_{t+s}\end{array}&\overline{D}(t,\kappa'c^2\ln^2|\Lambda|)\end{array}\kern-5pt\right)\leqslant\\ \frac{2dc\ln|\Lambda|}{|\Lambda|^c}+
P_\mu\left(\kern-10pt\begin{array}{c|c} \begin{array}{c}e\in \mathcal{P}_{t+s}\end{array} \kern-5pt&\kern-5pt \begin{array}{c} \bigcap_{1\leqslant i< s/|\Lambda|}\kern-10pt D\left( t_i,\kappa'c^2\ln^2|\Lambda|-i\kappa c\ln |\Lambda|\right)\\ \overline{D}\left(t,\kappa'c^2\ln^2|\Lambda|\right)\end{array}\end{array}\kern-10pt\right).
\end{multline*}
We notice that $s-|\Lambda|\lfloor s/|\Lambda|\rfloor<|\Lambda|$ and there exists a $ \kappa'>0$ such that 
$$\kappa'c^2\ln^2|\Lambda| - \kappa c\ln|\Lambda|\lfloor s/|\Lambda|\rfloor \geqslant \kappa c\ln|\Lambda|.
$$ 
We can apply again proposition~\ref{new.lemrec} at time $t_n$ and we get 
$$P_\mu\left(\kern-10pt\begin{array}{c|c} \begin{array}{c}e\in \mathcal{P}_{t+s}\end{array} \kern-5pt&\kern-5pt \begin{array}{c} \bigcap_{1\leqslant i< c\ln|\Lambda|}\kern-10pt D\left( t_i,\kappa'c^2\ln^2|\Lambda|-i\kappa c\ln |\Lambda|\right)\\ \overline{D}\left(t,\kappa'c^2\ln^2|\Lambda|\right)\end{array}\end{array}\kern-10pt\right)\leqslant \frac{1}{|\Lambda|^c}.
$$
Finally, we obtain the following upper bound for~\eqref{new.forec}:
$$P_\mu\left(\kern-10pt\begin{array}{c|c}\begin{array}{c}e\in \mathcal{P}_{t+s}\end{array}&\overline{D}(t,\kappa'c^2\ln^2|\Lambda|)\end{array}\kern-5pt\right)\leqslant \frac{2dc\ln|\Lambda|+1}{|\Lambda|^c}.
$$
We sum over the choices of $s< c|\Lambda|\ln|\Lambda|$ and we combine with~\eqref{new.psbig}. We obtain
\begin{multline*} P_\mu\left(\kern-3pt \begin{array}{c|c}\begin{array}{c}\exists s \geqslant 0,e\in \mathcal{P}_{t+s}\\ \forall r\in ]t,t+s] \\ e\notin \mathcal{P}_{r}
, E_r\neq e
\end{array}&\overline{D}(t,\kappa'c^2\ln^2|\Lambda|)\end{array}\kern-3pt\right)\leqslant \\\frac{1+c|\Lambda|\ln|\Lambda|+2d|\Lambda|c^2\ln^2|\Lambda|}{|\Lambda|^c}.
\end{multline*}
For $|\Lambda|> 4+c+2dc^2+12(2\kappa d)^d$, we have $\ln|\Lambda|\leqslant |\Lambda|$ and thus
$$
\frac{1+c|\Lambda|\ln|\Lambda|+2|\Lambda|dc^2\ln^2|\Lambda|}{|\Lambda|^c}\leqslant \frac{1+c+2dc^2}{|\Lambda|^{c-3}} \leqslant \frac{1}{|\Lambda|^{c-4}}.
$$
Therefore, there exists a $\tilde{p}<1$ and a $\kappa'>0$ such that for $p\geqslant \tilde{p}$, for any $c\geqslant 2$, we have
$$P_\mu\left(e\in \mathcal{I}_t\setminus \mathcal{P}_t\,\Big |\, \overline{D}(t,\kappa'c^2\ln^2|\Lambda|)\right)\leqslant \frac{1}{|\Lambda|^{c-4}}.
$$
Since $(c+4)^2/c^2\leqslant 25$ for $c\geqslant 1$, by replacing $\kappa'$ by $25\kappa'$ in the probability, we can replace $1/|\Lambda|^{c-4}$ by $1/|\Lambda|^c$. Hence the desired result.
\end{proof}
\def\cP{\mathcal{P}}
\def\cI{\mathcal{I}}

\section{The construction of the impatient STP}\label{eps.constp}
In order to improve the control of the speed of the pivotal edges, we
will construct a new STP which connects an edge $e\in\mathcal{P}_{t}$ at time $t$ and the set $\mathcal{P}_s\cup\mathcal{I}_s$ at time $s<t$. Before starting the construction, we define first some relevant properties of a STP, which will be enjoyed by our construction.
\begin{defi}
A STP $(e_1,t_1),\dots,(e_n,t_n)$ is impatient if every time-change is ended by an edge which is updated, i.e.,
$$\forall i\in \{1,\dots,n-2\}\qquad e_i = e_{i+1} \Rightarrow E_{t_{i+1}+1} = e_{i+1}.
$$
\end{defi}
\begin{defi}
A STP $(e_1,t_1),\dots,(e_n,t_n)$ is called $X$-closed-moving (resp. $Y$-closed-moving) if all the edges which are not time-change edges are closed in $X$ (resp. in $Y$), i.e.,
$$\forall i\in \{1,\dots,n-1\} \quad e_i \neq e_{i+1} \Rightarrow X_{t_i}(e_i) = 0 \quad(\text{resp. }Y_{t_i}(e_i) = 0).
$$
\end{defi}
\noindent We now construct a specific STP satisfying some of these properties.
\begin{prop}\label{eps.consstp}
Let $s<t$ be two times and $e\in \mathcal{P}_t$. There exists a decreasing simple impatient STP which connects the time-edge $(e,t)$ to an edge of the set $\mathcal{P}_s\cup\mathcal{I}_s\setminus\{e\}$ at time $s$ or an edge $f$ intersecting the boundary of $\Lambda$ after time $s$. Moreover this STP is $X$-closed-moving except on the edge $e$.
\end{prop}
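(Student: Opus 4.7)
The plan is to adapt the iterative construction of Proposition~\ref{new.stc} by introducing a new mechanism which handles interface edges. The key observation is that if the walk arrives at an edge $f$ which is open in $X$ but closed in $Y$, that is $f\in\mathcal{I}_{t_i}$, then we cannot move spatially away from $f$ without breaking the $X$-closed-moving property; instead, we perform a time-change on $f$ backwards to the moment just before its last update, where $f$ must have been pivotal in $Y$ and closed in both $X$ and $Y$.

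I would build the STP inductively. Starting from $(e_1,t_1)=(e,t)$, I maintain the invariant that $e_i$ is closed in $Y_{t_i}$ and belongs to $\mathcal{P}_{t_i}\cup\mathcal{I}_{t_i}$ (the starting edge $e$ being the only possible exception to the $X$-closed condition below). Two types of extension alternate. When $X_{t_i}(e_i)=0$, meaning $e_i\notin\mathcal{I}_{t_i}$, the edge $e_i$ lies on a closed $*$-cut of $Y_{t_i}$ by Lemma~\ref{new.connect}; we append neighbour edges at the constant time $t_i$ along this cut until we either reach $\partial\Lambda$ (yielding the second alternative of the proposition) or meet an edge of $\mathcal{I}_{t_i}\setminus\{e\}$. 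When $e_i\in\mathcal{I}_{t_i}$ and $e_i\neq e$, we set $\tau_i=\max\{r\leq t_i:E_r=e_i\}$; if $\tau_i>s$, we let $(e_{i+1},t_{i+1})=(e_i,\tau_i-1)$, and the impatience condition $E_{t_{i+1}+1}=e_{i+1}$ holds by the choice of $\tau_i$. Because $e_i$ entered or remained in $\mathcal{I}$ at time $\tau_i$ via the blocking rule of the coupling, we also have $e_i\in\mathcal{P}_{\tau_i-1}$ with $X_{\tau_i-1}(e_i)=Y_{\tau_i-1}(e_i)=0$, restoring the invariant. If instead $\tau_i\leq s$, then $e_i$ was not updated on $[\tau_i,t_i]$, so $e_i\in\mathcal{I}_s\setminus\{e\}$, and we terminate by setting $(e_{i+1},t_{i+1})=(e_i,s)$; this final time-change is exempted from impatience because the condition is required only at indices $i\leq n-2$.

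The construction halts in finitely many steps since every time-change strictly decreases $t_i$ and spatial walks stay within finite cuts. A postprocessing step modelled on the simplification algorithm from the proof of Proposition~\ref{new.stc} removes any repeated visits to an edge not separated by an opening in $Y$, thereby enforcing simplicity without disturbing the other properties. The main obstacle is to guarantee that each spatial walk along a cut can be steered to either $\partial\Lambda$ or an interface edge: this relies on the fact that a cut in $\Lambda$ separating $T$ from $B$ must meet $\partial\Lambda$, combined with the $*$-connectedness from Lemma~\ref{new.connect}. A secondary technical point is to check that the simplification step preserves the impatience property at the intermediate time-changes which survive the pruning, possibly by reinstating an appropriate update time when the original $\tau_i$ is lost.
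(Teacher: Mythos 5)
Your construction follows the same blueprint as the paper's Step~1 — a spatial walk along a closed cut, interleaved with backward time-changes at interface edges — but the time-change rule you chose is flawed. You set $\tau_i = \max\{r \leq t_i : E_r = e_i\}$ and claim that at $\tau_i-1$ the edge $e_i$ is pivotal in $Y$ \emph{and closed in both $X$ and $Y$}. The first two assertions are correct: the update at $\tau_i$ must be an opening attempt blocked by the conditioning, so $e_i\in\mathcal{P}_{\tau_i-1}$ and $Y_{\tau_i-1}(e_i)=0$. The third is not: nothing forces $X_{\tau_i-1}(e_i)=0$. If an earlier blocked opening at some time $r<\tau_i$ already put $e_i$ into the interface, and $e_i$ was not closed between $r$ and $\tau_i$, then $X_{\tau_i-1}(e_i)=1$, i.e.\ $e_i\in\mathcal{I}_{\tau_i-1}$. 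Your claimed invariant on $\mathcal{P}\cup\mathcal{I}$ is still preserved, but the "two types of extension alternate" picture fails: the next step would again be a time-change, not a spatial walk, and the assertion "$X_{\tau_i-1}(e_i)=Y_{\tau_i-1}(e_i)=0$" used to restore the invariant is false.

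The paper avoids this by using $\eta(t)=\max\{r<t : X_r(e_1)=Y_r(e_1)\}$ instead of the last update time. At $\eta(t)$ the two configurations agree on $e_1$ by definition, and since the disagreement starts at $\eta(t)+1$ through a blocked opening, $e_1$ is pivotal at $\eta(t)$, so both $X_{\eta(t)}(e_1)$ and $Y_{\eta(t)}(e_1)$ equal $0$; this is exactly the property your $\tau_i-1$ does not deliver. Your version can be repaired by iterating the time-change until $X$ is also closed (or until $s$ is reached) and then merging the consecutive time-changes — the last update time in the merged block still furnishes impatience — but you need to make that explicit. One further point: since the STP built here is $X$-closed-moving, the simplification pass must remove a repeated edge only if there is no intermediate \emph{opening in $X$} (not in $Y$ as in Proposition~\ref{new.stc}); the paper's Step~2 makes precisely this change.
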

\begin{proof}
The proof of this proposition is done in two steps. The first step is to construct a STP which connects certain edges. In the second step, we modify the STP obtained in the first step to get a simple and impatient STP.
\paragraph{Step 1.}
At time $t$, the edge $e$ belongs to a cut. Therefore, there exists a path $\gamma_1$ which connects $e$ to the boundary of $\Lambda$. We start at the edge $e$ and we follow the path $\gamma_1$. If the path $\gamma_1$ does not encounter an edge $f\in \mathcal{I}_t\cup \mathcal{P}_{t-1}\setminus\{e\}$ then the STP $$(e,t),(\gamma_1,t)$$ connects $e$ to the boundary of $\Lambda$, where the notation $(\rho,t)$, for a path $\rho= (e_i)_{1\leqslant i\leqslant n}$ and a time $t$, means the sequence of time-edges $(e_i,t)_{1\leqslant i\leqslant n}$. Suppose next that there exists an edge of $\mathcal{I}_t\cup\mathcal{P}_{t-1}\setminus\{e\}$ in $\gamma_1$. We enumerate the edges of $\gamma_1$ in the order they are visited when starting from $e$ and we consider the first edge $e_1$ in $\gamma_1$ which belongs to the set $\mathcal{I}_t\cup\mathcal{P}_{t-1}$. We denote by $\rho_1$ the sub-path of $\gamma_1$ visited between $e$ and $e_1$. We then consider the time $\eta(t)$ defined as follows:
$$ \eta(t) = \max\oset r<t\,:\, X_r(e_1) =Y_r(e_1) \cset.
$$
Since $e_1\in \mathcal{I}_t$, the time $\eta(t) $ when it becomes an edge of the interface is strictly less than $t$ and if $e_1\in\mathcal{P}_{t-1}\setminus \mathcal{I}_t$, we have $\eta(t)\leqslant t-1$. In both cases, we have $\eta(t)< t$ and the edge $e_1$ is closed in $X_{\eta(t)}$. If the time $\eta(t)$ is before the time $s$ then, at time $s$, the edge $e_1$ belongs to the set $\mathcal{I}_s\cup\mathcal{P}_s\setminus \{e\}$. Therefore the STP
$$(e,t),(\rho,t),(e_1,t),(e_1,s)
$$
satisfies the conditions in the proposition. If we have $\eta(t)> s$, then we repeat the above argument starting from the edge $e_1$ at time $\eta(t)$. We obtain either a path $\gamma_2$ which connects $e_1$ to the boundary of $\Lambda$ at time $\eta(t)$ or a path $\rho_2$ which connects $e_1$ to an edge $e_2\in \mathcal{I}_{\eta(t)}\cup \mathcal{P}_{\eta(t)-1}\setminus\{e\}$ and a time $\eta^2(t)<\eta(t)$. We proceed in this way until we reach a time edge $(e_k,\eta^k(t))$ with $\eta^k(t)\leqslant s$. Since $\eta(t)<t$, the sequence of times $$
\eta(t),\eta^2(t),\dots,\eta^k(t)
$$decreases strictly through this procedure and this procedure terminates after a finite number of iterations. The concatenation of the paths obtained at the end of the procedure,
$$(e,t),(\rho_1,t),(e_1,\eta(t)),\dots,(\rho^k,\eta^{k-1}(t)),(f_k,s),
$$
connects $e$ to an edge of $\mathcal{P}_s\cup\mathcal{I}_s\setminus\{e\}$. Since the sequence $(\eta^i(t))_{1\leqslant i\leqslant k}$ is decreasing, this is a decreasing STP. Each time when the STP meets an edge of $\mathcal{I}$ which is different from $e$, there is a time change to the time before it opened in $X$, therefore each movement in space except on the edge $e$ is done through a closed edge in $X$ and the STP is $X$-closed-moving. 
\paragraph{Step 2.} We use two iterative procedures to transform the STP in the step 1 into a simple and impatient STP. To get a simple STP, we use the same procedure as in the proof of proposition 4.4 in~\cite{new}. Let us denote by $(e_i,t_i)_{0\leqslant i\leqslant N}$ the STP obtained previously. Starting with the edge $e_0$, we examine the rest of the edges one by one. Let $i\in \oset 0,\dots,N\cset$. Suppose that the edges $e_0,\dots,e_{i-1}$ have been examined and let us focus on $e_i$. We encounter three cases:
\begin{itemize}[leftmargin = 0.4cm]
\item For every index $j\in \{i+1,\dots,N\}$, we have $e_j\neq e_i$. Then, we don't modify anything and we start examining the edge $e_{i+1}$.
\item There is an index $j\in\{i+1,\dots,N\}$ such that $e_i = e_j$, but for the first index $k>i+1$ such that $e_i = e_k$, there is a time $\alpha\in ]t_k,t_i[$ when $X_\alpha(e_i) = 1$. Then we don't modify anything and we start examining the next edge $e_{i+1}$.
\item There is an index $j\in\{i+1,\dots,N\}$ such that $e_i = e_j$ and for the first index $k> i+1$ such that $e_i = e_k$, we have $X_\alpha(e_i) = 0$ for all $\alpha\in ]t_k,t_i[$. In this case, we remove all the time-edges whose indices are strictly between $i$ and $k$. We then have a simple time change between $t_i$ and $t_k$ on the edge $e_i$. We continue the procedure from the index $k$.
\end{itemize}
The STP becomes strictly shorter after every modification (we remove systematically the consecutive time changes if there is any), and the procedure will end after a finite number of modifications. We obtain in the end a simple path in $X$. Since the procedure doesn't change the order of the times $t_i$, we still have a decreasing path. In order to obtain an impatient STP, we modify the simple decreasing STP obtained above and we use another iterative procedure as follows. We denote again by $(e_i,t_i)_{0\leqslant i \leqslant n}$ the simple STP obtained above. We start by examining the time-edge $(e_0,t_0)$ and then the rest of the time edges of the STP one by one as illustrated in the figure~\ref{eps.fig:imp}. Suppose that we have examined the indices $i< k$ and that we are checking the index $k$. If the edge $e_{k+1}$ is different from $e_k$, we don't modify the STP at this stage and we continue the procedure from $(e_{k+1},t_{k+1})$.
\begin{figure}
\centering
\resizebox{120mm}{!}{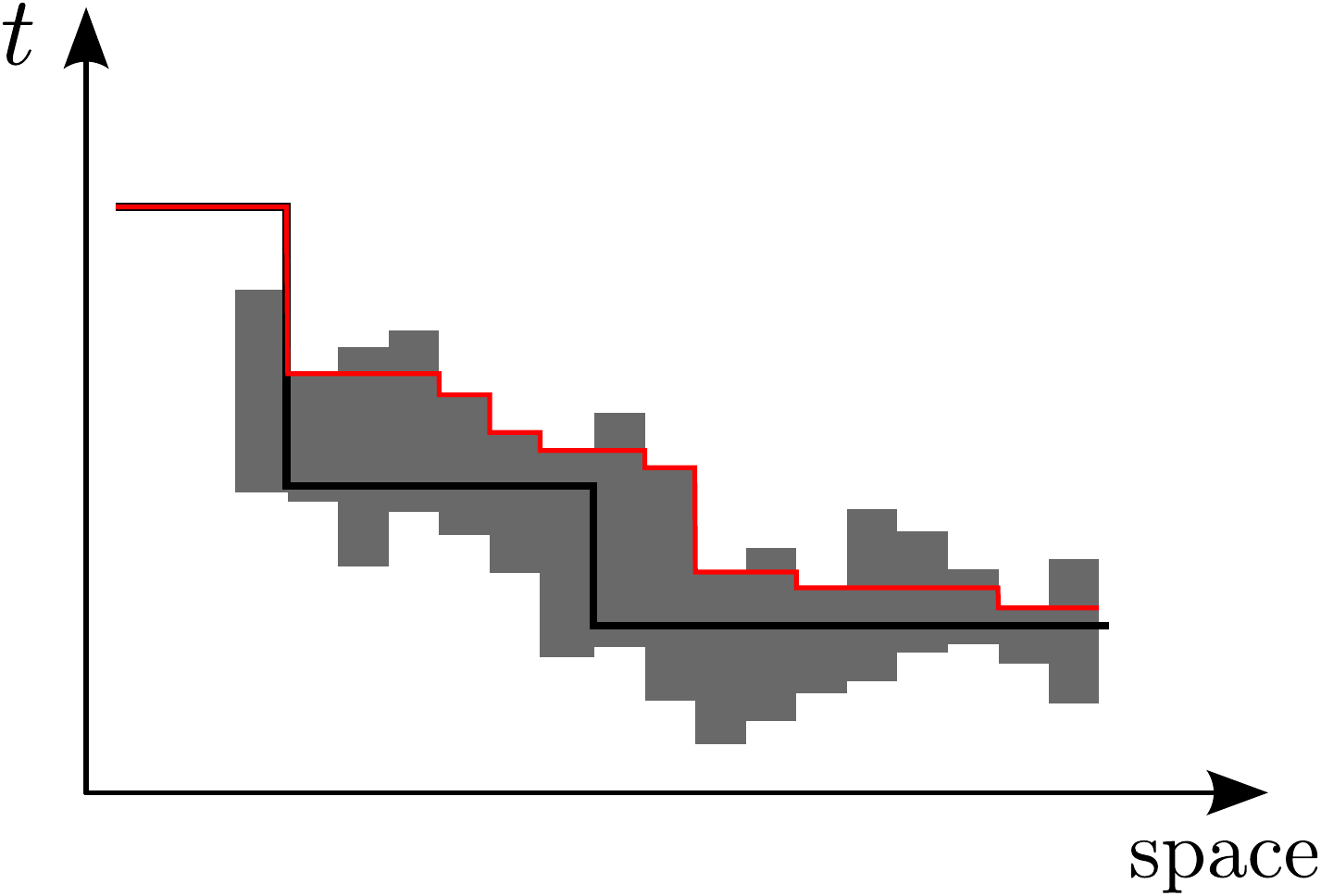}
\caption{An impatient modification (in red) of a STP (in black) according to the intervals when each edge is closed (in gray)}
\label{eps.fig:imp}
\end{figure}
If the edge $e_{k+1}$ is equal to $e_k$, then the time-edge $(e_k,t_k)$ belongs to a time change. Since the STP is $X$-closed-moving, then the edge $e_{k+1}$ is closed at time $t_{k+1}$. Let $[\alpha,\beta]$ be the biggest interval containing $t_{k+1}$ during which the edge $e_{k+1}$ is closed in $X$. If $\beta\geqslant t_k$ and $e_{k+2}\neq e_{k+3}$, we replace the sub-sequence $$(e_k,t_k),(e_{k+1},t_{k+1}),(e_{k+2},t_{k+2})$$
by 
$$(e_k,t_k),(e_{k+2},t_k),(e_{k+2},t_{k+2}),
$$
and we continue the modification from the time-edge $(e_{k+2},t_k)$.
If $\beta\geqslant t_k$ and $e_{k+2}= e_{k+3}$ we replace the sequence
$$(e_k,t_k),(e_{k+1},t_{k+1}),(e_{k+2},t_{k+2}),(e_{k+3},t_{k+3})$$ by
$$(e_k,t_k),(e_{k+2},t_k),(e_{k+2},t_{k+3}),
$$
and we continue the modification from the time-edge $(e_{k+2},t_k)$.
If $\beta<t_k$ and $e_{k+2}\neq e_{k+3}$, we replace $(e_k,t_k),(e_{k+1},t_{k+1})$ by $$(e_k,t_k),(e_k,\beta),(e_{k+2},\beta).$$
If $\beta<t_k$ and $e_{k+2}= e_{k+3}$, we replace 
$(e_k,t_k),(e_{k+1},t_{k+1}),(e_{k+2},t_{k+2})$ by $$(e_k,t_k),(e_k,\beta),(e_{k+2},\beta),$$
and we continue the STP at the time-edge $(e_{k+2},\beta)$. The STP obtained after the modification procedure is decreasing, $X$-closed-moving and impatient. Moreover, between two consecutive visits of an edge $f$ of the STP, there exists a time when the edge $f$ is open. Therefore, this STP is also simple. 
\end{proof}

\section{Exponential decay of the new STP}\label{eps.speedcontrol}
We show here that the set $\mathcal{P}\cup\mathcal{I}$ cannot move too fast. Typically, during an interval of size $|\Lambda|\ln|\Lambda|$, the set $\mathcal{P}\cup\mathcal{I}$ can at most move a distance of order $\ln|\Lambda|$. This result relies on an estimate for the STP constructed in proposition~\ref{eps.consstp} which we state in the following proposition.
\begin{lem}\label{eps.lemstp}
Let $e$ be an edge in $\Lambda$ and $\ell\in\mathbb{N}^*$. Let $(\varepsilon_1,\dots,\varepsilon_n)$ be a sequence of edges such that $|\mathrm{support}(\varepsilon_1,\dots,\varepsilon_n)| = \ell$. We have the following inequality:
\begin{multline*}\exists \tilde{p}<1 \quad \forall p\geqslant \tilde{p} \quad \forall s,t \quad 0<t-s\leqslant \ell|\Lambda| \\
P_\mu\left(\begin{array}{c}
\exists \gamma \text{ decreasing simple impatient}\\X\text{-closed-moving STP except on }e,\\
\gamma \text{ starts from }(e,t)\text{ and ends after }s,\\
\mathrm{space}(\gamma) = (\varepsilon_1,\dots,\varepsilon_n)
\end{array}\right)\leqslant \left(1+\frac{1}{|\Lambda|}\right)^{\ell|\Lambda|}(4-4p)^n.
\end{multline*}
\end{lem}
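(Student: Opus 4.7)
The plan is to fix the space projection $\Gamma = (\varepsilon_1, \dots, \varepsilon_n)$ and to decompose the event according to the time realization of the STP, then to group the constraints edge-by-edge in $\mathrm{support}(\Gamma)$ so that independence across distinct edges allows for a clean product bound.

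First, I parametrize a candidate STP with $\mathrm{space}(\gamma) = \Gamma$ by the sequence of arrival times $t = \tau_1 \geq \tau_2 \geq \cdots \geq \tau_n \geq s$ at the successive edges $\varepsilon_i$, together with, for each index where $\tau_{i+1} < \tau_i$, a choice of whether the time change is performed on $\varepsilon_i$ or on $\varepsilon_{i+1}$. For each such realization, I translate the hypotheses into explicit events on the graphical construction $(E_r, B_r)$. The $X$-closed-moving condition requires, for every $i$ with $\varepsilon_i \neq e$, that $X_{\tau_i}(\varepsilon_i) = 0$. The impatient condition requires, for each time change ending at time $\tau_{i+1}$ on an edge $h \in \{\varepsilon_i, \varepsilon_{i+1}\}$, that $E_{\tau_{i+1}+1} = h$; this contributes a factor $1/|\Lambda|$ per time change. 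The simple condition requires, between any two non-adjacent visits of the same edge, an intermediate opening update.

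Second, I observe that once the arrival times and the side of each time change are fixed, the constraints factor across the edges of $\mathrm{support}(\Gamma)$: the contribution of an edge $f$ only involves the random variables $\{(E_r, B_r) : s < r \leq t,\ E_r = f\}$, which are mutually independent across distinct edges. Applying the BK inequality edge by edge, in the spirit of Proposition~\ref{new.BKFermeutre}, yields a product upper bound over $f \in \mathrm{support}(\Gamma)$.

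Third, I sum over the time realizations. For each edge $f$ with $v(f)$ visits in $\Gamma$, enumerating the possible arrival times at $f$ and update patterns on $f$ amounts to summing weighted sequences of updates on $f$ during an interval of length at most $\ell|\Lambda|$. Each update probability is $1/|\Lambda|$ and each required closing contributes a factor $1-p$, so the total weight on $f$ is bounded by an elementary expression of type $(1 + 1/|\Lambda|)^{|\Lambda|}$ coming from the full time window allotted to $f$, while each of the $v(f)$ visits produces a factor $1-p$ multiplied by at most $4$ possible structural configurations (time-change presence or absence on each side of the visit, and the side on which it is performed). Multiplying across the $\ell$ edges of $\mathrm{support}(\Gamma)$ and the $n$ visits in the space projection produces the claimed bound $(1 + 1/|\Lambda|)^{\ell|\Lambda|}(4-4p)^n$.

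The main difficulty will be the careful enumeration in the third step, namely ensuring that each candidate STP is counted exactly once, and that the impatience factor $1/|\Lambda|$ telescopes exactly against the sum over permissible arrival times without a logarithmic or polynomial loss. A further subtlety is the special status of the starting edge $e$, which is exempt from the $X$-closed-moving constraint and hence requires a separate treatment of the factor associated with its visits; this produces only a constant extra factor that can be absorbed into the combinatorial constant $4$ in the bound.
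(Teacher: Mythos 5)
Your plan is recognisably the same as the paper's: fix the space projection $\Gamma$, decompose the event according to the time realization of the STP, factorize the probability over the edges of $\mathrm{support}(\Gamma)$, bound each factor using the Markov chain $(X_t(f))_{t\in\mathbb{N}}$ and the impatience constraint, then sum over time realizations. Steps~1 and~2 of your outline match the paper's strategy (you invoke the BK inequality, but this is not needed here: for distinct edges the constraints involve disjoint sets of graphical-construction variables, so plain independence already gives the product bound).

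The genuine gap is in step~3, in the way you perform the final summation over time realizations. You claim that, for a fixed edge $f$, the sum over the choice of time-change instants "during an interval of length at most $\ell|\Lambda|$" yields a factor of type $\left(1+\frac{1}{|\Lambda|}\right)^{|\Lambda|}$ "coming from the full time window allotted to $f$", and that multiplying over the $\ell$ edges of the support then produces $\left(1+\frac{1}{|\Lambda|}\right)^{\ell|\Lambda|}$. But no edge has a private time window of length $|\Lambda|$: all $\ell$ edges share the single interval $[s,t]$, whose length is at most $\ell|\Lambda|$. If you enumerate, separately for each edge $f$, the possible instants of its time-change endings and pair each with the impatience factor $1/|\Lambda|$, you obtain $\sum_{k_f\geqslant 0}\binom{\ell|\Lambda|}{k_f}|\Lambda|^{-k_f}=\left(1+\frac{1}{|\Lambda|}\right)^{\ell|\Lambda|}$ \emph{per edge}, and taking the product over the $\ell$ edges gives $\left(1+\frac{1}{|\Lambda|}\right)^{\ell^2|\Lambda|}$, which is much weaker than the stated bound and would break the downstream application. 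The correct bookkeeping must treat the time changes globally: there are at most $n$ of them in the whole STP, their instants form a single set $K\subset\{s+1,\dots,t\}$ with $|K|=k$, so the sum $\sum_{k}\binom{n}{k}\binom{\ell|\Lambda|}{k}\frac{(2-2p)^n}{|\Lambda|^k}$ is taken once (not once per edge), the $\binom{n}{k}$ coming from the choice of positions in $\Gamma$ where a time change occurs. Absorbing $\binom{n}{k}\leqslant 2^n$ into the constant gives $(4-4p)^n$, and the remaining $\sum_k\binom{\ell|\Lambda|}{k}|\Lambda|^{-k}=\left(1+\frac{1}{|\Lambda|}\right)^{\ell|\Lambda|}$ is a single global factor. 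Your per-edge attribution of the $(1+1/|\Lambda|)^{|\Lambda|}$ factor does not correspond to any legitimate decomposition and has no valid justification; the telescoping you anticipated as "the main difficulty" is in fact where the argument, as written, fails.
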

\begin{proof}
Let us fix a STP $\gamma$ satisfying the conditions stated in the probability. We denote by $(e_i,t_i)_{i\in I}$ the sequence of the time-edges of $\gamma$. We denote by $k$ the number of the time changes in $\gamma$ and by $T$ the set of the indices of the time changes, i.e.,
$$T = \oset i\in I \,:\, e_i = e_{i+1},\quad t_i \neq
 t_{i+1}\cset.
$$ 
We shall obtain an upper bound of the probability 
\begin{equation}
P \left(\begin{array}{c}
(e_i,t_i)_{i\in I}\text{ is a decreasing simple impatient}\\X\text{-closed-moving STP except on }e
\end{array}\right),
\label{eps.sumpb}
\end{equation}
which depends only upon the integer $n$ and the number of time changes $k$. In order to bound the probability appearing in the lemma, we shall sum over the choices of the set of the $k$ times, denoted by $K$, in the interval $\{s,\dots,t\}$, over the choices of set of the $k$ edges, denoted by $A$, where the time changes occur and the number $k$ from $1$ to $n$. The probability appearing in the lemma is less or equal than
\begin{equation}\label{eps.sum6.1}\sum_{1\leqslant k\leqslant n}\kern-5pt\sum_{\footnotesize\begin{array}{c}A\subset\{1,\dots,n\}\\|A|=k\end{array}}\kern-10pt\sum_{\footnotesize\begin{array}{c}K\subset\{s,\dots,t\}\\|K| = k\end{array}}\kern-20ptP \left(\begin{array}{c}
(e_i,t_i)_{i\in I}\text{ is a decreasing simple impatient}\\X\text{-closed-moving STP except on }e
\end{array}\kern-2pt\right).
\end{equation}
Let us obtain an upper bound for this probability. The STP is impatient and $X$-closed-moving, therefore for any $i\in T$, the edge $e_{i+1}$ becomes open at time $t_{i+1}+1$. Moreover, the STP is simple, thus for any pair of indices $(p,q)\in I\setminus T$, if $e_p = e_q$ and $t_p>t_q$, there exists a time $r\in ]t_q,t_p[$, such that the edge $e_p$ is open at time $r$. We can rewrite the probability inside the sum as 
\begin{equation}P_\mu\left(\begin{array}{c}
\forall i \in T \quad E_{t_{i+1}+1} = e_{i+1}\\
\forall i\in I\setminus T\quad X_{t_i}(e_i) = 0\\
\forall p,q\in I\setminus T\text{ s.t. } e_p = e_q, t_p>t_q\\
\exists r\in ]t_q,t_p[\quad X_r(e_p) = 1
\end{array}\right).\label{eps.probstc}
\end{equation}
Since the times $t_i$ are fixed, this probability can be factorised as a product over the edges. In fact, the event in the probability depends only on the process $(X_t)_{t\in\mathbb{N}}$. We introduce, for an edge $f\subset\Lambda$, the subset $J(f)$ of $I$: 
$$J(f) = \oset i\in I\,:\, e_i = f\cset.
$$ Let us denote by $S$ the set $\mathrm{support}(\gamma)$. The previous probability is less or equal than
\begin{equation}\prod_{f\in S\setminus\{e\}}P_\mu\left(\begin{array}{c}
\forall i \in J(f)\cap (T+1) \quad E_{t_i+1} = f\\
\forall i\in J(f)\setminus T\quad X_{t_i}(f) = 0\\
\forall p,q\in J(f)\setminus T \text{ s.t. }p <q\\
\exists r\in ]t_q,t_p[\quad X_r(f) = 1
\end{array}\right).\label{eps.pedge}
\end{equation}
Let us consider one term of the product. For a fixed edge $f$, we can order the set $\oset t_i\,:\,i\in J(f)\setminus T\cset$ in an increasing sequence $( \tau_i)_{1\leqslant i\leqslant m_f}$, where $m_f = |J(f)\setminus T|$. Let us denote by $T(f)$ the set of the indices among $\{1,\dots,m_f\}$ which correspond to the end of a time change, i.e., the set corresponding to $J(f)\cap (T+1)$ before the reordering.
Since the STP is simple, between two consecutive visits at times $\tau_i$ and $\tau_{i+1}$ of $f$, there is a time $\theta_i$ when $f$ is open. Moreover the STP is impatient, so for each index $i\in T(f)$, the edge $f$ becomes open at time $\tau_i+1$. Therefore, each term of the product~\eqref{eps.pedge} is less or equal than 
\begin{equation}
P_\mu\left(\begin{array}{c}
\forall i\in \{1,\dots, m_f\}\quad  X_{\tau_i}(f)= 0\\
\forall i\in T(f) \quad X_{\tau_{i}+1}(f) = 1\\
\forall i\in \{1,\dots, m_f-1\}\quad \exists\theta_i\in]\tau_i,\tau_{i+1}[\quad X_{\theta_i}(f) =1\label{eps.pF}
\end{array}\right).
\end{equation}
In order to simplify the notations, we define, for a time $r$, the event 
$$\mathcal{E}(r) = \left\lbrace\begin{array}{c}
\forall i\in \{1,\dots, m_f\}\text{ such that } \tau_i\leqslant r\quad X_{\tau_i}(f)= 0\\
\forall i\in T(f)\text{ such that } \tau_i+1\leqslant r \quad X_{\tau_{i}+1}(f) = 1\\
\forall i\in \{1,\dots, m_f-1\}\text{ such that } \tau_i\leqslant r\\ \exists\theta_i\in]\tau_i,\tau_{i+1}[\quad X_{\theta_i}(f) =1
\end{array} \right\rbrace.
$$
The status of the edge $f$ in the process $(X_t)_{t\in\mathbb{N}}$ evolves according to a Markov chain on $\{0,1\}$. The sequence $(\tau_i)_{1\leqslant i \leqslant m_f}$ being fixed, if $m_f\in T(f)$, we condition \ref{eps.pF} by the events before time $\tau_{m_f}$, we have 
\begin{multline*}P_\mu\left(\begin{array}{c}
\forall i\in \{1,\dots, m_f\}\quad  X_{\tau_i}(f)= 0\\
\forall i\in T(f) \quad X_{\tau_{i}+1}(f) = 1,\quad m_f\in T(f)\\
\forall i\in \{1,\dots, m_f-1\}\quad \exists\theta_i\in]\tau_i,\tau_{i+1}[\quad X_{\theta_i}(f) =1
\end{array}\right)=\\ P_\mu\left(\kern-5pt\begin{array}{c|c}X_{\tau_{m_f}+1}(f) = 1&\mathcal{E}(\tau_{m_f})\end{array}\right)P_\mu\big(\mathcal{E}(\tau_{m_f})\big)\leqslant \frac{P_\mu\big(\mathcal{E}(\tau_{m_f})\big)}{|\Lambda|}.
\end{multline*}
If $m_f\notin T(f)$, the probability
$$P_\mu\left(\begin{array}{c}
\forall i\in \{1,\dots, m_f\}\quad  X_{\tau_i}(f)= 0\\
\forall i\in T(f) \quad X_{\tau_{i}+1}(f) = 1,\quad m_f\notin T(f)\\
\forall i\in \{1,\dots, m_f-1\}\quad \exists\theta_i\in]\tau_i,\tau_{i+1}[\quad X_{\theta_i}(f) =1
\end{array}\right)
$$
is equal to $P_\mu\big(\mathcal{E}(\tau_{m_f})\big)$. We then condition $P_\mu\big(\mathcal{E}(\tau_{m_f})\big)$ by the events before time $\tau_{m_f-1}$. We shall distinguish two cases according to whether $m_f-1$ belongs to $T(f)$ or not. If $m_f-1\in T(f)$, we have 
$$P_\mu\big(\mathcal{E}(\tau_{m_f})\big) = P_\mu\left(\begin{array}{c|c}\begin{array}{c}X_{\tau_{m_f}}(f)=0\\X_{\tau_{m_f-1}+1}(f) = 1\end{array}&\mathcal{E}(\tau_{m_f-1})\end{array}\right)P_\mu\big(\mathcal{E}(\tau_{m_f-1})\big),
$$
and if $m_f-1\notin T(f)$, we have 
$$P_\mu\big(\mathcal{E}(\tau_{m_f})\big)\\ = P_\mu\left(\begin{array}{c|c}\begin{array}{c}X_{\tau_{m_f}}(f)=0\\\exists\theta_{m_f}\in]\tau_{m_f-1},\tau_{m_f}[\\ X_{\theta_{m_f}}(f) =1\end{array}&\mathcal{E}(\tau_{m_f-1})\end{array}\right)P_\mu\big(\mathcal{E}(\tau_{m_f-1})\big).
$$
We condition successively the event $P_\mu\big(\mathcal{E}(\tau_{i})\big)$ by $\mathcal{E}(\tau_{i-1})$, we obtain
\begin{multline}
P_\mu\big(\mathcal{E}(\tau_{m_f})\big) =  P_\mu\big(\mathcal{E}(\tau_1)\big)\prod_{1\leqslant i<m_f,i\in T(f)}P_\mu\left(\begin{array}{c|c}\begin{array}{c}X_{\tau_{i+1}}(f)=0\\X_{\tau_{i}+1}(f) = 1\end{array}&\mathcal{E}(\tau_i)\end{array}\right)\\ \times
\prod_{1\leqslant i<m_f, i\notin T(f)}P_\mu\left(\begin{array}{c|c}\begin{array}{c}X_{\tau_{i+1}}(f)=0\\\exists \theta_{i+1}\in ]\tau_{i},\tau_{i+1}[\\X_{\theta_i}(f) = 1\end{array}&\mathcal{E}(\tau_i)\end{array}\right).\label{eps.pMarkov}
\end{multline}
By the Markov property, each term in the second product is equal to 
$$P_\mu\left(\begin{array}{c|c}\begin{array}{c}X_{\tau_{i+1}}(f)=0\\\exists \theta_i\in ]\tau_{i},\tau_{i+1}[\\X_{\theta_{i+1}}(f) = 1\end{array}&X_{\tau_i}(f) =0\end{array}\right).
$$
Since this probability is invariant by translation in time, it is equal to 
$$P_0\left(\begin{array}{c}X_{\tau'}(f)=0\\\exists \theta\in ]0,\tau'[\\X_\theta(f) = 1\end{array}\right),
$$
where we have set $\tau' = \tau_{i+1}-\tau_i$ and $P_0$ is the law of the Markov chain $(X_t(f))_{t\in\mathbb{N}}$ starting from a closed edge. By considering the stopping time $\theta'$ defined as the first time after $0$ when $f$ is open, we have by the strong Markov property
$$P_0\left(\begin{array}{c}X_{\tau'}(f)=0\\\exists \theta\in ]0,\tau'[\\X_\theta(f) = 1\end{array}\right)\leqslant P_0\big(X_{\tau'}(f) =0\,\big|\, X_{\theta'}(f) =1\big) = P_1\big(X_{\tau'-\theta'}(f) =0\big).
$$
Notice that for $r\geqslant 1$, we have 
$$P_1\big(X_{r}(f)=0\big)\leqslant P_\mu\big(X_{r}(f)=0\big) = 1-p.
$$
Therefore we have
$$P_0\left(\begin{array}{c}X_{\tau'}(f)=0\\\exists \theta\in ]0,\tau'[\\X_\theta(f) = 1\end{array}\right)\leqslant 1-p.
$$
As for the probabilities in the first product of \eqref{eps.pMarkov}, we can also replace $\mathcal{E}(\tau_i)$ by $\{X_{\tau_i}(f)=0\}$ in the conditioning. The difference between the previous case is that we have directly $\theta' = 1$, since $X_{\tau_{i}+1}(f) = 1$. We have
$$P_\mu\left(\begin{array}{c|c}\begin{array}{c}X_{\tau_{i+1}}(f)=0\\X_{\tau_{i}+1}(f) = 1\end{array}&\mathcal{E}(\tau_i)\end{array}\right)\leqslant P_1\big(X_{\tau'-1}(f) =0\big)P_0\big(X_1(f) =1\big)\leqslant\kern-2pt \frac{1-p}{|\Lambda|}.
$$
Combining the upper bounds for each term of the product above, we have the following upper bound for $P_\mu\big(\mathcal{E}(\tau_{m_f})\big)$:
$$P_\mu\big(\mathcal{E}(\tau_{m_f})\big)\leqslant\frac{(1-p)^{m_f}}{|\Lambda|^{|T(f)\cap\{1,\dots,m_f-1\}|}},
$$
where
$$|T(f)\cap\{1,\dots,m_f-1\}|= \left\lbrace \begin{array}{cl}|J(f)\cap (T+1)| &\text{if } m_f\notin T(f)\\
|J(f)\cap (T+1)|-1 &\text{if }m_f\in T(f)\end{array}\right..
$$
In both cases, we have the following upper bound for \eqref{eps.pF}:
$$P_\mu\left(\kern-2pt\begin{array}{c}
\forall i\in \{1,\dots, m_f\}\quad  X_{\tau_i}(f)= 0\\
\forall i\in T(f) \quad X_{\tau_{i}+1}(f) = 1\\
\forall i\in \{1,\dots, m_f-1\}\quad \exists\theta_i\in]\tau_i,\tau_{i+1}[\quad X_{\theta_i}(f) =1
\end{array}\kern-2pt\right)\leqslant \frac{2(1-p)^{m_f}}{|\Lambda|^{|J(f)\cap (T+1)|}}.
$$
We obtain an upper bound for~\eqref{eps.probstc} by multiplying this inequality over the edges $f$ in $\mathrm{support}(\gamma)$:
\begin{multline}P_\mu\left(\begin{array}{c}
\forall i \in T \quad E_{t_{i+1}} = e_{i+1}\\
\forall i\in I\setminus T\quad X_{t_i}(e_i) = 0\\
\forall p,q\in I\setminus T\text{ s.t. } e_p = e_q, t_p>t_q\\
\exists r\in ]t_q,t_p[\quad X_r(e_p) = 1
\end{array}\right)\\
\leqslant \frac{2^{|S|}(1-p)^{\sum_{f\in S}m_f}}{|\Lambda|^{\sum_f|J(f)\cap(T+1)|}}\leqslant \frac{2^{|S|}(1-p)^{|I|-k}}{|\Lambda|^k}.\label{eps.ptifixed}
\end{multline}
Since $|I|-k\geqslant n$, and $|S|\leqslant n$, for $k$ fixed and $(t_i)_{i\in I}$ fixed, we have the following upper bound for \eqref{eps.sumpb},
$$P \left(\begin{array}{c}
(e_i,t_i)_{i\in I}\text{ is a decreasing simple impatient}\\X\text{-closed-moving STP except on }e
\end{array}\right)\leqslant \frac{(2-2p)^{n}}{|\Lambda|^k}.
$$ 
Finally, we use this upper bound in \eqref{eps.sum6.1} and we have
\begin{multline*}
P_\mu\left(\begin{array}{c}
\exists \gamma \text{ decreasing simple impatient}\\X\text{-closed-moving STP except on }e,\\
\gamma \text{ starts from }(e,t)\text{ and ends after }s,\\
\mathrm{space}(\gamma) = (\varepsilon_1,\dots,\varepsilon_n)
\end{array}\right)\\
\leqslant
\sum_{1\leqslant k\leqslant n}\sum_{A\subset\{1,\dots,n\},|A|=k}\sum_{K\subset\{s,\dots,t\},|K| = k}\kern-5pt\frac{(2-2p)^{n}}{|\Lambda|^k}\\\leqslant \sum_{1\leqslant k\leqslant n} \binom{n}{k}\binom{\ell|\Lambda|}{k}\frac{(2-2p)^{n}}{|\Lambda|^k}
\leqslant \sum_{1\leqslant k\leqslant n} \binom{\ell|\Lambda|}{k}\frac{(4-4p)^{n}}{|\Lambda|^k}\\\leqslant \left(1+\frac{1}{|\Lambda|}\right)^{\ell|\Lambda|}(4-4p)^n.
\end{multline*}
This yields the desired result.
\end{proof}
We use next proposition~\ref{eps.consstp} and lemma~\ref{eps.lemstp} to show that the pivotal edges cannot move too fast. 
\begin{prop}\label{eps.speed}
There exists $\tilde{p}<1$, such that for $p\geqslant \tilde{p}$, for $\ell\geqslant 1$, $t\in\mathbb{N}$, $s\in \mathbb{N}$, $s\leqslant \ell|\Lambda|$ and any edge $e$ at distance at least $\ell$ from the boundary of $\Lambda$,
$$P_\mu\Big(e\in \mathcal{P}_{t+s},\, d(e,\mathcal{P}_t\cup\mathcal{I}_t\setminus\{e\})\geqslant \ell\Big)\leqslant \exp(-\ell).
$$
\end{prop}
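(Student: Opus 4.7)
The plan is to apply Proposition~\ref{eps.consstp} to produce a STP whose space projection must travel distance at least $\ell$, bound each possible space projection via Lemma~\ref{eps.lemstp}, and then sum using the $*$-neighbour count of Lemma~\ref{new.count}.

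First, I would invoke Proposition~\ref{eps.consstp} with past time $t$ and future time $t+s$: on the event $\{e\in\mathcal{P}_{t+s}\}$ there is a decreasing simple impatient STP $\gamma$ starting at $(e,t+s)$ and ending either at an edge of $\mathcal{P}_t\cup\mathcal{I}_t\setminus\{e\}$ at time $t$ or at an edge meeting $\partial\Lambda$ at some time $\geq t$, this STP being $X$-closed-moving except on $e$. Under the hypotheses $d(e,\partial\Lambda)\geq\ell$ and $d(e,\mathcal{P}_t\cup\mathcal{I}_t\setminus\{e\})\geq\ell$, both possible endpoints lie at Euclidean distance at least $\ell$ from $e$, so $\gamma$ must travel distance at least $\ell$. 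Consecutive edges of $\mathrm{Space}(\gamma)$ are $*$-neighbours (the closed portions of $\gamma$ come from cuts), hence each step covers distance at most $d$, and therefore $\mathrm{length}(\gamma)\geq \lceil \ell/d\rceil$.

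Then I would decompose by fixing the space projection $\Gamma=\mathrm{Space}(\gamma)$, giving
$$
P_\mu\bigl(e\in\mathcal{P}_{t+s},\,d(e,\mathcal{P}_t\cup\mathcal{I}_t\setminus\{e\})\geq \ell\bigr)
\leq \sum_{n\geq \lceil\ell/d\rceil}\sum_{\Gamma}P_\mu\bigl(\exists\gamma\text{ as above with }\mathrm{Space}(\gamma)=\Gamma\bigr),
$$
where the inner sum is over visitable sequences of length $n$ starting at $e$; by Lemma~\ref{new.count} there are at most $\alpha(d)^{n-1}$ such $\Gamma$. For each of them, Lemma~\ref{eps.lemstp} provides a bound of the shape $(1+1/|\Lambda|)^{\ell'|\Lambda|}(4-4p)^{n}$ where $\ell'$ is the support size of $\Gamma$; a close reading of the proof of that lemma shows that the factor $\ell'|\Lambda|$ appeared purely as an upper bound on the length of the time interval $[t,t+s]$, so one may use the tighter exponent $s$ instead. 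Combined with the hypothesis $s\leq \ell|\Lambda|$, this yields a per-term bound of $e^{\ell}(4-4p)^{n}$.

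Summing the resulting geometric series produces
$$
P_\mu(\cdots)\leq \frac{e^{\ell}}{\alpha(d)}\sum_{n\geq \lceil\ell/d\rceil}\bigl(\alpha(d)(4-4p)\bigr)^{n}
\leq \frac{e^{\ell}\bigl(\alpha(d)(4-4p)\bigr)^{\ell/d}}{\alpha(d)\bigl(1-\alpha(d)(4-4p)\bigr)},
$$
valid as soon as $\alpha(d)(4-4p)<1$. Choosing $\tilde{p}<1$ close enough to $1$ that $\alpha(d)(4-4p)\leq e^{-2d}$ makes $\bigl(e\cdot(\alpha(d)(4-4p))^{1/d}\bigr)^{\ell}\leq e^{-\ell}$, driving the right-hand side below $e^{-\ell}$ once the $\tilde{p}$-independent prefactor is absorbed into a slightly larger $\tilde{p}$. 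The most delicate step I anticipate is the mismatch between the support-size hypothesis in Lemma~\ref{eps.lemstp} and the distance parameter $\ell$ here: one cannot use Lemma~\ref{eps.lemstp} as a black box, and one has to reopen its proof to replace $\ell'|\Lambda|$ by $s$ in the crucial exponent. Beyond that technicality, the argument is a standard Peierls-type summation exploiting the fact that the exponential decay in $(4-4p)^n$ dominates the $\alpha(d)^{n}$ growth at high density.
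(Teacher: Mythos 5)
Your proposal follows the same route as the paper: invoke Proposition~\ref{eps.consstp} to produce the decreasing simple impatient $X$-closed-moving STP starting at $(e,t+s)$, apply Lemma~\ref{eps.lemstp} to each fixed space projection, count the projections via $\alpha(d)$, and close with a Peierls estimate. Where you differ from the paper is in two pieces of bookkeeping that the paper glosses over. First, you correctly observe that the closed pieces of the STP are $*$-paths, so consecutive centres are at distance at most $d$, and a path travelling Euclidean distance $\geqslant\ell$ only has $\geqslant\lceil\ell/d\rceil$ edges; the paper asserts ``length at least $\ell$'' and fixes $n=\ell$, which is an overclaim (the earlier proof of Proposition~\ref{new.vconst} does divide by $d$). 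Second, you spot that Lemma~\ref{eps.lemstp} as stated conflates the support size of $\Gamma$ with the parameter that bounds $t-s$, so applying it to a projection whose support is strictly smaller than $\ell$ would not satisfy its literal hypothesis; you correctly note that its proof only uses the length of the time interval through $\binom{|K|}{k}$, so the statement really should read ``for any $L\geqslant t-s$, $(1+1/|\Lambda|)^{L}(4-4p)^{n}$'', which with $L=s\leqslant\ell|\Lambda|$ gives $e^{\ell}(4-4p)^{n}$. Both corrections leave the final constant-tuning unchanged: one needs $\alpha(d)(4-4p)$ small enough to beat both the $\alpha(d)^{n}$ count and the prefactor $e^{\ell}$ at scale $n\sim\ell/d$, so $\alpha(d)(4-4p)\leqslant e^{-2d}$ suffices and the sum over $n\geqslant\lceil\ell/d\rceil$ converges geometrically. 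In short: same approach, and your version patches two small imprecisions in the paper's write-up.
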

\begin{proof}
By proposition~\ref{eps.consstp}, there exists a STP which is decreasing simple impatient and $X$-closed-moving except on $e$ which starts from the edge $e$ at time $t+s$ and ends at an edge of $\mathcal{P}_t\cup\mathcal{I}_t\setminus\{e\}$ or an edge intersecting the boundary of $\Lambda$ after the time $t$.  In both cases, this STP has a length at least $\ell$. Therefore, we have the inequality
\begin{multline*}P_\mu\Big(e\in \mathcal{P}_{t+s},\, d(e,\mathcal{P}_t\cup\mathcal{I}_t\setminus\{e\})\geqslant \ell\Big)\\ \leqslant P_\mu \left(\kern-5pt\begin{array}{c}
\exists \gamma \text{ decreasing simple impatient}\\X\text{-closed-moving STP except on }e\\
\gamma \text{ starts from }(e,t+s)\text{ and ends after }t\\
|\mathrm{length}(\gamma)|\geqslant \ell
\end{array}\kern-5pt\right).
\end{multline*}
Let us fix a path $(e_1,\dots,e_n)$ with $n = \ell$ starting from $e$. By lemma~\ref{eps.lemstp}, for $\ell\geqslant 1$, we have
$$
P_\mu\left(\begin{array}{c}
\exists \gamma \text{ decreasing simple impatient}\\X\text{-closed-moving STP except on }e,\\
\gamma \text{ starts from }(e,t+s)\text{ and ends after }t,\\
\mathrm{space}(\gamma) = (e_1,\dots,e_\ell)
\end{array}\right)\leqslant \left(1+\frac{1}{|\Lambda|}\right)^{\ell|\Lambda|}(4-4p)^\ell.
$$
We sum over the number of the choices for the path $(e_1,\dots,e_\ell)$ and we obtain
$$P_\mu \left(\kern-5pt\begin{array}{c}
\exists \gamma \text{ decreasing simple impatient}\\X\text{-closed-moving STP except on }e\\
\gamma \text{ starts from }(e,t+s)\text{ and ends after }t\\
|\mathrm{length}(\gamma)|\geqslant \ell
\end{array}\kern-5pt\right)\leqslant \left(1+\frac{1}{|\Lambda|}\right)^{\ell|\Lambda|}\kern-9pt\alpha(d)^\ell(4-4p)^\ell,
$$
where $\alpha(d)$ is the number of the $*$-neighbours of an edge in dimension $d$.
There exists a $\tilde{p}<1$ such that for $p\geqslant \tilde{p}$, we have 
$$\forall \ell\geqslant 1\quad \left(1+\frac{1}{|\Lambda|}\right)^{\ell|\Lambda|}\alpha(d)^\ell(4-4p)^\ell\leqslant e^{-\ell}.
$$ 
This gives the desired upper bound.
\end{proof}

\section{The proof of theorem \ref{eps.main}}\label{eps.proofmain}
We now prove theorem \ref{eps.main} with the help of proposition~\ref{eps.speed} and the observation that an edge of the interface cannot survive a time more than $O(|\Lambda|\ln|\Lambda|)$.

\begin{proof}[Proof of theorem \ref{eps.main}]
Let $c$ be a constant bigger than 1. We define two sets $\mathfrak{P}_t^-$ and $\mathfrak{P}_t^+$ as
\begin{align*}
\mathfrak{P}_t^- = \bigcup_{r\in[t-2dc|\Lambda|\ln|\Lambda|,t]}\cP_r\\ \mathfrak{P}_t^+ = \bigcup_{s\in [t,t+2dc|\Lambda|\ln|\Lambda|]}\mathcal{P}_s.
\end{align*}
By the definition of $d_H^\ell$, we have
\begin{multline*}P_\mu\left( d_H^{2d c\ln|\Lambda|}\big(\mathfrak{P}_t^+,\mathfrak{P}_t^-\big)\geqslant 2d c\ln|\Lambda|\right)\\ = P_\mu\left(\begin{array}{c}
\exists s\in [0,2dc|\Lambda|\ln|\Lambda|]\quad \exists e\in \mathcal{P}_{t+s}\\d\Big(e,\Lambda^c\cup \mathfrak{P}_t^-\Big)\geqslant 2d c\ln|\Lambda|
\end{array}\right) \\+ P_\mu\left(\begin{array}{c}
\exists s\in [0,2dc|\Lambda|\ln|\Lambda|]\quad \exists e\in \mathcal{P}_{t-s}\\d\Big(e,\Lambda^c\cup \mathfrak{P}_t^+\Big)\geqslant 2d c\ln|\Lambda|
\end{array}\right).
\end{multline*}
Since the probability concerns only the process $(Y_t)_{t\in\mathbb{N}}$, which is reversible, we have
\begin{multline*}P_\mu\left(\begin{array}{c}
\exists s\in [0,2dc|\Lambda|\ln|\Lambda|]\quad \exists e\in \mathcal{P}_{t+s}\\d\Big(e,\Lambda^c\cup \mathfrak{P}_t^-\Big)\geqslant 2d c\ln|\Lambda|
\end{array}\right) \\= P_\mu\left(\begin{array}{c}
\exists s\in [0,2dc|\Lambda|\ln|\Lambda|]\quad \exists e\in \mathcal{P}_{t-s}\\d\Big(e,\Lambda^c\cup \mathfrak{P}_t^+\Big)\geqslant 2d c\ln|\Lambda|
\end{array}\right).
\end{multline*}
Therefore, we can concentrate on the following probability
$$P_\mu\left(\begin{array}{c}
\exists s\in [0,2dc|\Lambda|\ln|\Lambda|]\quad \exists e\in \mathcal{P}_{s}\\d\Big(e,\Lambda^c\cup \mathfrak{P}_t^-\Big)\geqslant 2d c\ln|\Lambda|
\end{array}\right).
$$
Let us fix an edge $e$ in $\Lambda$ at distance at least $2d c\ln|\Lambda|
$ from $\mathfrak{P}_t^-$ and a $s\in [0,2dc|\Lambda|\ln|\Lambda|]$. We distinguish two cases. If the set $\mathcal{P}_t\cup\mathcal{I}_t$ is at distance more than $2dc\ln|\Lambda|$ from the edge $e$, then by proposition \ref{eps.speed}, there exists a $p_2<1$ such that, for $p\geqslant p_2$ and $c\geqslant 1$, we have 
\begin{equation}P_\mu\big(e\in\mathcal{P}_{t+s},d(e,\mathcal{P}_t\cup\mathcal{I}_t)\geqslant 2dc\ln|\Lambda|\big)\leqslant \exp(-2dc\ln|\Lambda|).\label{eps.PIloin}
\end{equation}
If there exists an edge $f\in\mathcal{P}_t\cup \mathcal{I}_t$, which is at distance less than $2dc\ln|\Lambda|$ from $e$, we consider the last time when $f$ was pivotal before $t$ and we define the random integer $\tau$ such that 
$$\tau = \inf\oset r\geqslant 0\,:\, f\in \mathcal{P}_{t-r}\cset.
$$
Since $f\notin\mathfrak{P}_t^-$, we must have $\tau\geqslant 2dc|\Lambda|\ln|\Lambda|$. The edge $f$ is not pivotal during the time interval $[t-\tau+1,t]$ and it belongs to the interface. Moreover, it cannot be chosen to be modified during this interval since it must remain different in the two processes. Therefore, for any $r\in [t-2dc|\Lambda|\ln|\Lambda|+1,t]$, we have $E_r \neq f$. However, this event is unlikely because the sequence $(E_t)_{t\in\mathbb{N}}$ is a sequence of i.i.d. random edges chosen uniformly in $\Lambda$. More precisely, we have the following inequality:
\begin{multline*}
P_\mu\big(\tau \geqslant 2dc|\Lambda|\ln|\Lambda|\big)\leqslant P\Big(\forall r\in [t-2dc|\Lambda|\ln|\Lambda|+1,t], E_r\neq f\Big)\\ \leqslant\left(1-\frac{1}{2d|\Lambda|}\right)^{2dc|\Lambda|\ln|\Lambda|}\leqslant \frac{1}{|\Lambda|^c}.
\end{multline*}
We obtain the following inequality:
\begin{equation}P_\mu\left(\begin{array}{c}
e\in\mathcal{P}_{t+s}\quad\exists f\in\mathcal{P}_t\cup\mathcal{I}_t\\
d(e,f)< 2dc\ln|\Lambda|\\
d(e,\Lambda\cup\mathfrak{P}_t^-)\geqslant 2dc\ln|\Lambda|
\end{array}\right)\leqslant \frac{\lambda(d)(2dc\ln|\Lambda|)^d}{|\Lambda|^c},\label{eps.sbig}
\end{equation}
where $\lambda(d)$ is a constant depending only on the dimension.
We combine the two cases \eqref{eps.PIloin} and \eqref{eps.sbig}, we obtain
$$P_\mu\big(e\in\mathcal{P}_{t+s}, d(e,\Lambda\cup\mathfrak{P}_t^-)\geqslant 2dc\ln|\Lambda|\big)\leqslant \frac{\lambda(d)(2dc\ln|\Lambda|)^d}{|\Lambda|^c}+\frac{1}{|\Lambda|^{2dc}}.
$$
We then sum over the number of the choices for the edge $e$ and of the number $s$ from $1$ to $2dc|\Lambda|\ln|\Lambda|$. We obtain 
\begin{equation}\label{eps.ssmall}
P_\mu\left(\begin{array}{c}
\exists s\in [t,t+2dc|\Lambda|\ln|\Lambda|]\quad \exists e\in \mathcal{P}_{t+s}\\d\Big(e,\Lambda^c\cup \mathfrak{P}_t^-\Big)\geqslant 2d c\ln|\Lambda|
\end{array}\right)\leqslant \frac{\lambda(d)(2dc\ln|\Lambda|)^{d+1}}{|\Lambda|^{c-2}}+\frac{2dc\ln|\Lambda|}{|\Lambda|^{2dc-2}}.
\end{equation}
In other words, we have
$$P_\mu\left(\begin{array}{c}
\mathfrak{P}_t^+\nsubseteq\mathcal{V}\big(\Lambda^c\cup \mathfrak{P}_t^-,2d c\ln|\Lambda|\big)
\end{array}\right) \leqslant \frac{\lambda(d)(2dc\ln|\Lambda|)^{d+1}}{|\Lambda|^{c-2}}+\frac{2dc\ln|\Lambda|}{|\Lambda|^{2dc-2}}.
$$
By the reversibility of the process $(Y_t)_{t\in\mathbb{N}}$, we also have
$$P_\mu\left(\begin{array}{c}
\mathfrak{P}_t^-\nsubseteq\mathcal{V}\big(\Lambda^c\cup \mathfrak{P}_t^+,2d c\ln|\Lambda|\big)
\end{array}\right)\\ \leqslant \frac{\lambda(d)(2dc\ln|\Lambda|)^{d+1}}{|\Lambda|^{c-2}}+\frac{2dc\ln|\Lambda|}{|\Lambda|^{2dc-2}}.
$$
Combining the two previous inequalities, we have
$$P_\mu\left(
d_H^{2dc\ln|\Lambda|}\big(\mathfrak{P}_t^-,\mathfrak{P}_t^+\big)\geqslant 2dc \ln|\Lambda|
\right)\\
 \leqslant \frac{2\lambda(d)(2dc\ln|\Lambda|)^{d+1}}{|\Lambda|^{c-2}}+\frac{4dc\ln|\Lambda|}{|\Lambda|^{2dc-2}}.
$$
For $|\Lambda|\geqslant e^{2d^2c}$, we have 
$$\frac{2\lambda(d)(2dc\ln|\Lambda|)^{d+1}}{|\Lambda|^{c-2}}+\frac{4dc\ln|\Lambda|}{|\Lambda|^{2dc-2}}\leqslant \frac{1}{|\Lambda|^{c-3}}.
$$
This yields the desired result.
\end{proof}

\bibliographystyle{halpha}
\bibliography{intperco}
\end{document}